\newtheorem{theorem}{Theorem}
\newtheorem{lemma}[theorem]{Lemma}
\newtheorem{cor}[theorem]{Corollary}
\newtheorem{definition}[theorem]{Definition}
\newtheorem{remark}[theorem]{Remark}
\newcommand{\R}{\ensuremath{\mathbb{R}}}
\newcommand{\Z}{\ensuremath{\mathbb{Z}}}
\newcommand{\C}{\ensuremath{\mathbb{C}}}
\newcommand{\N}{\ensuremath{\mathbb{N}}}
\newcommand{\T}{\ensuremath{\mathbb{T}}}
\newcommand{\nTilde}[1]{\widetilde{#1}}                    
\newcommand{\nHat}[1]{\widehat{#1}}                        
\newcommand{\cldone}{{\rm C}_{\rm ld}^1}
\newcommand{\cone}{{\rm C}^1}
\DeclareMathOperator{\diag}{diag}
\DeclareMathOperator{\rank}{rank}
\DeclareMathOperator{\const}{const}
\DeclareMathOperator{\im}{Im}
\DeclareMathOperator{\Dom}{Dom}
\DeclareMathOperator{\Ran}{Ran}
\DeclareMathOperator{\Ker}{Ker}
\DeclareMathOperator{\adj}{adj}
\DeclareMathOperator{\trans}{T}
\DeclareMathOperator{\tr}{tr}
\numberwithin{equation}{section}
\numberwithin{theorem}{section}
\begin{document}

\title[Unified Weyl–-Titchmarsh Theory]{Unifying discrete and continuous Weyl–-Titchmarsh theory via a class of linear Hamiltonian systems on Sturmian time scales}

\author{Douglas R. Anderson} 
\address{Department of Mathematics and Computer Science, Concordia College, Moorhead, MN 56562 USA \\ visiting the School of Mathematics, the University of New South Wales, Sydney NSW 2052, Australia}
\email{andersod@cord.edu}

\keywords{limit point; limit circle; spectral problem; Sturmian time scales; Sturm-Liouville theory; linear Hamiltonian systems; nabla derivative.}
\subjclass[2000]{34B20; 34N05; 47A10; 47B25}

\begin{abstract} 
In this study, we are concerned with introducing Weyl-Titchmarsh theory for a class of dynamic linear Hamiltonian nabla systems over a half-line on Sturmian time scales. After developing fundamental properties of solutions and regular spectral problems, we introduce the corresponding maximal and minimal operators for the system. Matrix disks are constructed and proved to be nested and converge to a limiting set. Some precise relationships among the rank of the matrix radius of the limiting set, the number of linearly independent square summable solutions, and the defect indices of the minimal operator are established. Using the above results, a classification of singular dynamic linear Hamiltonian nabla systems is given in terms of the defect indices of the minimal operator, and several equivalent conditions on the cases of limit point and limit circle are obtained, respectively. These results unify and extend certain classic and recent results on the subject in the continuous and discrete cases, respectively, to Sturmian time scales.
\end{abstract}

\maketitle\thispagestyle{empty}



\section{Introduction}\label{secintro}

Ahlbrandt \cite{ahl} introduced the following class of linear discrete Hamiltonian equations
$$ \nabla x(t)=H_y(t,x(t),y(t-1)), \quad \nabla y(t)=-H_x(t,x(t),y(t-1)), $$
where $\nabla x(t)=x(t)-x(t-1)$, which yields a linear discrete Hamiltonian system \cite{ap} of the form
$$ \nabla x(t)=A(t)x(t)+B(t)u(t-1), \quad  \nabla u(t)=C(t)x(t)-A^*(t)u(t-1), $$
where $A$, $B$, and $C$ are $d\times d$ matrices, $A^*$ is the complex conjugate transpose of $A$, and $B$ and $C$ are Hermitian.
Shi \cite{shi2} shifted the points one unit right and extended the analysis to develop a Weyl-Titchmarsh theory for linear discrete Hamiltonian systems
\begin{equation}\label{shi1.5}
 J\Delta y(t)=(\lambda W(t)+P(t))R(y)(t), \quad t\in[0,\infty)\cap\Z,
\end{equation}
where $W$ and $P$ are $2d\times 2d$ complex Hermitian matrices with weight function $W(t)\ge 0$, which is one possible discrete version (see also Clark and Gesztesy \cite{cg2}) of the classic form studied by Atkinson \cite{fva}
\begin{equation}\label{ms1.2}
 Jy'(t)=(\lambda W(t)+P(t))y(t), \quad t\in[0,\infty).
\end{equation}
Shi used the partial right uniform shift operator $R(y)(t)=(y_1^{\trans}(t+1),y_2^{\trans}(t))^{\trans}$ with the vector $y(t)=(y_1^{\trans}(t),y_2^{\trans}(t))^{\trans}$ for $y_1,y_2\in\C^d$ and the canonical symplectic matrix $J=\left(\begin{smallmatrix} 0 & -I_d \\ I_d & 0 \end{smallmatrix}\right)$. With a view toward extending \eqref{shi1.5} to Sturmian time scales, we introduce the linear Hamiltonian nabla system on Sturmian time scales
\begin{equation}\label{maineq}
 Jy^\nabla (t)=(\lambda W(t)+P(t))\Upsilon y(t), \quad t\in[t_0,\infty)_\T:=[t_0,\infty)\cap\T, \quad J=\left(\begin{smallmatrix} 0 & -I_d \\ I_d & 0 \end{smallmatrix}\right),
\end{equation}
where $W$ and $P$ are $2d\times 2d$ complex Hermitian left-dense continuous matrix functions, $y:\T\rightarrow\C^{2d}$ is a nabla differentiable vector function, and we use the partial left-shift operator $\Upsilon$ defined by 
\begin{equation}\label{leftshift}
 \Upsilon y(t):=\begin{pmatrix} y_1(t) \\ y_2^\rho(t)\end{pmatrix} \quad\text{for}\quad y(t)=\begin{pmatrix} y_1(t) \\ y_2(t)\end{pmatrix},  \quad y_1,y_2:[\rho(t_0),\infty)_\T\rightarrow\C^d. 
\end{equation} 
Throughout this work we have the following key assumptions: following Atkinson \cite[Chapter 9]{fva}, the weight function $W$ satisfies the definiteness conditions
\begin{equation}\label{shiA1}
 \begin{cases}
   W(t)=\diag\{W_1(t),W_2(t)\}\ge 0, & W_j(t)\ge 0 \; \text{is}\; d\times d \;\text{Hermitian}, \quad j=1,2, \\
   \displaystyle\int_{t_0}^t (\Upsilon y)^*(s)W(s)\Upsilon y(s)\nabla s > 0, & \forall\; t\in[t_1,\infty)_\T \; \text{for some}\; t_1\in\T, 
 \end{cases}
\end{equation}
for every nontrivial solution $y$ of \eqref{maineq}, and $P$ satisfies the block form
\begin{equation}\label{shiA2}
 P(t)=\begin{pmatrix} -C(t) & A^*(t) \\ A(t) & B(t) \end{pmatrix}, \quad I_d-\nu(t) A(t)\;\text{invertible}, \quad t\in[t_0,\infty)_\T, 
\end{equation}
where $A$, $B$, and $C$ are left-dense continuous $d\times d$ complex matrices with $B$ and $C$ Hermitian. Using standard notation, $\T$ is an unbounded Sturmian time scale; the left jump operator $\rho$ is given by $\rho(t)=\sup\{s\in\T:s<t\}$ with the composition $u\circ\rho$ denoted $u^\rho$; the graininess function is defined by $\nu(t)=t-\rho(t)$; and the nabla derivative of $x$ at $t\in\T$, denoted $x^\nabla(t)$, is the vector (provided it exists) given by
$$ x^\nabla(t):=\lim_{s\rightarrow t}\frac{x^\rho(t)-x(s)}{\rho(t)-s}. $$
Sturmian time scales, introduced in \cite{abv}, are a specialized class of time scales (closed, nonempty sets of real numbers) with the property that
\begin{equation}\label{sturmfact}
 \sigma(\rho(t))=\rho(\sigma(t)) \quad\text{for all}\quad t\in[t_0,\infty)_\T. 
\end{equation}
This crucial assumption allows us to identify a partial right-shift operator $\Upsilon^{-1}$ in terms of \eqref{leftshift}, namely
\begin{equation}\label{rightshift}
 \Upsilon^{-1} y(t):=\begin{pmatrix} y_1(t) \\ y_2^\sigma(t)\end{pmatrix} \quad\text{for}\quad y(t)=\begin{pmatrix} y_1(t) \\ y_2(t)\end{pmatrix},  \quad y_1,y_2:[\rho(t_0),\infty)_\T\rightarrow\C^d;
\end{equation}
note that on our Sturmian time scale $\T$, we have $\Upsilon(\Upsilon^{-1}y)(t)=\Upsilon^{-1}(\Upsilon y)(t)=y(t)$ for all $t\in[t_0,\infty)_\T$.
For more on general time scales using the nabla derivative, see \cite{aticig} and \cite[Chapter 3]{bp2}. 


\begin{remark}
We employ the nabla version in \eqref{maineq} for two reasons. One, it reflects the original form introduced by Ahlbrandt \cite{ahl}, and two, it contains the following two important dynamic models \cite{and2,agh}. The first is the linear Hamiltonian nabla system on general time scales 
\begin{equation}\label{nabham}
 x^\nabla(t) = A(t)x(t) + B(t)u^\rho(t), \quad
 u^\nabla(t) = \left[C(t)-\lambda\omega(t)\right]x(t) - A^*(t)u^\rho(t) 
\end{equation}
for $t\in[a,b]_\T$, where $A$, $B$, $C$ and $\omega$ are $d\times d$ matrices, $B$ and $C$ are Hermitian, $\omega>0$ is positive definite. It is straightforward to write \eqref{nabham} in the form \eqref{maineq}, by taking 
$$  J = \left(\begin{smallmatrix} 0 & -I_d \\ I_d & 0 \end{smallmatrix}\right), \quad y(t)=\left(\begin{smallmatrix} x(t)\\ u(t)\end{smallmatrix}\right), \quad P(t) = \left(\begin{smallmatrix} -C(t) & A^*(t) \\ A(t) & B(t) \end{smallmatrix}\right), \quad 
   W(t) = \left(\begin{smallmatrix} \omega(t) & 0 \\ 0 & 0 \end{smallmatrix}\right). $$
This model includes the second-order self-adjoint matrix equation \cite{and,ab}
$$ -(P_0X^\Delta)^\nabla(t)+Q(t)X(t)=0 $$
for Hermitian $P_0$ and $Q$ with $P_0$ invertible, by taking $A=0$, $B=(P_0^\rho)^{-1}$, $C=Q$, and $\lambda=0$ in \eqref{nabham}.

The second important dynamic model is the even-order self-adjoint Sturm-Liouville dynamic equation
\begin{eqnarray} \label{My}
My(t) &=& \sum\limits_{k=0}^n (-1)^{n-k}\left( p_{n-k}y^{\nabla^{n-k-1}\Delta} \right)^{\Delta^{n-k-1}\nabla}(t) \\ \nonumber
  &=& (-1)^n\left(p_ny^{\nabla^{n-1}\Delta}\right)^{\Delta^{n-1}\nabla}(t) + 
  \dots  - \left(p_{3}y^{\Delta^2\nabla}\right)^{\nabla^2\Delta}(t) \\ 
  & & + \left(p_{2}y^{\nabla\Delta}\right)^{\Delta\nabla}(t) - \left(p_{1}y^\Delta\right)^\nabla(t) 
      + p_0(t)y(t), \nonumber
\end{eqnarray}
which is formally self-adjoint \cite{agh}, where $p_n\neq 0$. We will show \eqref{My} can be written in the form of \eqref{nabham}, where
\begin{gather}
  A=(a_{ij})_{1\leq i,j\leq n} \quad\text{with}\quad
   a_{ij}=\begin{cases}1: & \text{if } j=i+1,\; 1\leq i\leq n-1, \\
                       0: & \text{otherwise,} \end{cases} \label{ABCSL} \\
  B=\diag\left\{0,\dots,0,\frac{1}{p_n^\rho}\right\},\quad
  C=\diag\left\{p_0,p_1^\rho,p_2^\rho,\ldots,p_{n-1}^\rho\right\}. \nonumber
\end{gather}
\noindent To do this, we introduce the pseudo-derivatives of the function $y$ given by
\begin{eqnarray*}
  y^{[k]} &=& y^{\nabla^k}, \quad 0\le k\le n-1, \quad y^{[0]} = y^{\nabla^0} = y,   \\
  y^{[n]} &=& p_ny^{\nabla^{n-1}\Delta},  \\
  y^{[n+k]} &=& p_{n-k}y^{\nabla^{n-k-1}\Delta} - \left(y^{[n+k-1]}\right)^\Delta  \\ 
   &=&\sum_{i=0}^k(-1)^{k-i}\left(p_{k-i}y^{\nabla^{n-i-1}\Delta}\right)^{\Delta^{k-i}}, \quad 1 \le k \le n-1,  \\
  y^{[2n]} &=& p_0y - \left(y^{[2n-1]}\right)^\nabla=My.
\end{eqnarray*}
Then using the substitution
$$ x = \left(\begin{smallmatrix} y^{[0]} \\ y^{[1]} \\ \vdots \\ y^{[n-1]} \end{smallmatrix}\right), \quad
   u = \left(\begin{smallmatrix} y^{[2n-1]} \\ y^{[2n-2]} \\ \vdots \\ y^{[n]} \end{smallmatrix}\right), $$
and the matrices $A$, $B$, and $C$ above in \eqref{ABCSL}, we have that 
$$ x^\nabla = A(t)x + B(t)u^\rho, \quad u^\nabla = C(t)x - A^*(t)u^\rho, $$
and the example is complete.
\end{remark}

There is a vast literature on continuous linear Hamiltonian systems, and of late a growing collection of results on corresponding discrete Hamiltonian systems. For a few of the many relevant papers in the continuous case, see \cite{bgms},\cite{cg3}$-$\cite{cfhl},\cite{Reid3},\cite{az}, and see \cite{ahl,ap,bdk,cg2,jy1,jy2},\cite{shi1}$-$\cite{ws} for recent work on discrete second-order difference equations and linear Hamiltonian systems. Some of the fundamental continuous and disrete results alluded to above have been unified and extended via dynamic equations on time scales, introduced by Hilger \cite{Hilger}. For scalar Sturm-Liouville results on time scales, see \cite{abw,abv,kong,messer}, and for systems see \cite{AlBoRi,bp1,Hilscher}. Turning to Weyl \cite{weyl} and Titchmarsh \cite{titch} specifically, much has been published on the continuous Weyl-Titchmarsh theory, for example \cite{ehs,ek1,ek2},\cite{hsc1,hsc2},\cite{hsh1,hsh2,hsh3}, and three substantial works on the corresponding discrete theory, Atkinson \cite{fva}, Clark and Gesztesy \cite{cg2} and Shi \cite{shi2}. Recently \cite{weiss} made a first start on the scalar theory on time scales. There is yet to be any published work on a unified continuous and discrete Weyl-Titchmarsh theory, however, for linear Hamiltonian systems; via this paper we hope to initiate such an investigation. With Sturmian time scales there is a much broader scope of discretization options other than just the uniform step size offered by difference equations.  In the analysis that follows, we will largely follow a development of the theory along the lines of Shi \cite{shi2}.

We will proceed as follows. In Section 2, we introduce fundamental properties for system \eqref{maineq} and introduce a Lagrange identity. Regular spectral problems are discussed in Section 3 via separated boundary conditions, and a result on eigenpairs is given. In Section 4 we introduce a weighted Hilbert space to facilitate a study of maximal and minimal operators. Weyl disks and their limiting set are the focus of Section 5, while in Section 6 we introduce the concept of square summable solutions. In Section 7, we give a classification of singular linear Hamiltonian nabla systems. In the final section we discuss an alternative form to \eqref{maineq} that may also serve as a generalization of \eqref{shi1.5} and \eqref{ms1.2} on Sturmian time scales.


\section{Fundamental Properties}\label{funprop}

For any given $\lambda\in\C$, using the assumptions on the block forms of $W$ and $P$ in \eqref{shiA1} and \eqref{shiA2}, respectively, we can rewrite \eqref{maineq} as the pair of $d$-vector equations
\begin{equation}\label{shi2.1}
 \begin{cases} y_1^\nabla(t) = A(t)y_1(t)+\big(B(t)+\lambda  W_2(t)\big)y_2^\rho(t), \\
 y_2^\nabla(t) = \big(C(t)-\lambda  W_1(t)\big)y_1(t)-A^*(t)y_2^\rho(t). \end{cases}
\end{equation}
By \eqref{shiA2}, we have that
\begin{equation}\label{Atilde}
  E(t):=\big(I_d-\nu(t) A(t)\big)^{-1} 
\end{equation}
exists. Then we may also view solutions $y=(y_1,y_2)^{\trans}$ of \eqref{maineq} and \eqref{shi2.1} as solutions of 
\begin{equation}\label{nabsym}
 y^\nabla(t)=\mathcal{S}(t,\lambda)y(t), \quad \mathcal{S}(\cdot,\lambda):=\begin{pmatrix}  A-\nu\big(B+\lambda W_2\big)E^*\big(C-\lambda W_1\big) & \big(B+\lambda W_2\big)E^* \\ E^*\big(C - \lambda W_1\big) & -E^{*}A^* \end{pmatrix}
\end{equation}
for $E$ in \eqref{Atilde}, where it is straightforward to check that $\mathcal{S}(\cdot,\lambda)$ satisfies
\begin{equation}\label{Stilde}  
 \mathcal{S}^*\left(\cdot,\overline{\lambda}\right)J + J\mathcal{S}(\cdot,\lambda) 
    =\nu\mathcal{S}^*\left(\cdot,\overline{\lambda}\right)J\mathcal{S}(\cdot,\lambda)
\end{equation}
for $t\in\T$. Directly from \eqref{Stilde} we have that
\begin{equation}\label{imsstar}
 \left(I_{2d}-\nu(t)\mathcal{S}(t,\lambda)\right)^*J \left(I_{2d}-\nu(t)\mathcal{S}(t,\lambda)\right) = J, 
\end{equation}
so that $I_{2d}-\nu(t)\mathcal{S}(t,\lambda)$ is invertible and thus $\mathcal{S}(\cdot,\lambda)$ is $\nu-$regressive. Given the results above, we now have the following lemma.


\begin{lemma}
Assume $I_d-\nu A$ is invertible on $\T$, and $\lambda\in\C$ is arbitrary. Then for any vector solution $y(\cdot,\lambda)$ of \eqref{maineq}$_{\lambda}$ and for any vector solution $z\left(\cdot,\overline{\lambda}\right)$ of \eqref{maineq}$_{\overline{\lambda}}$ we have
\begin{equation}\label{shi2.6}
 z^*\left(t,\overline{\lambda}\right)Jy(t,\lambda)=\const.
\end{equation}
\end{lemma}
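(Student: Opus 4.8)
The plan is to show that the scalar function $w(t) := z^*\!\left(t,\overline{\lambda}\right) J\, y(t,\lambda)$ has vanishing nabla derivative on $[t_0,\infty)_\T$, from which it follows that $w$ is constant. First I would record the two inputs I intend to use. Both solutions may be viewed through the regressive first-order form \eqref{nabsym}, so that $y^\nabla=\mathcal{S}(\cdot,\lambda)y$ and $z^\nabla=\mathcal{S}\!\left(\cdot,\overline{\lambda}\right)z$; taking adjoints of the latter gives $\left(z^\nabla\right)^*=z^*\mathcal{S}^*\!\left(\cdot,\overline{\lambda}\right)$, which is exactly the factor appearing in the algebraic identity \eqref{Stilde}. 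The second input is \eqref{Stilde} itself, with its spectral parameters in these matched (conjugate) positions.

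Next I would differentiate $w$ using the nabla product rule in the form $(fg)^\nabla=f\,g^\nabla+f^\nabla g^\rho$, taking $f=z^*J$ (with $J$ constant) and $g=y$. After substituting the two differential equations this yields
\begin{equation*}
 w^\nabla = z^* J\, y^\nabla + \left(z^\nabla\right)^* J\, y^\rho = z^* J\,\mathcal{S}(\cdot,\lambda)\,y + z^*\mathcal{S}^*\!\left(\cdot,\overline{\lambda}\right) J\, y^\rho.
\end{equation*}
The only thing standing between this and a direct appeal to \eqref{Stilde} is the backward shift $y^\rho$ in the second term, which I would eliminate using the defining relation $y^\rho = y-\nu y^\nabla = \left(I_{2d}-\nu\mathcal{S}(\cdot,\lambda)\right)y$. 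This recasts the derivative as
\begin{equation*}
 w^\nabla = z^*\left[\, J\,\mathcal{S}(\cdot,\lambda) + \mathcal{S}^*\!\left(\cdot,\overline{\lambda}\right) J - \nu\,\mathcal{S}^*\!\left(\cdot,\overline{\lambda}\right) J\,\mathcal{S}(\cdot,\lambda)\,\right] y.
\end{equation*}

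By \eqref{Stilde} the bracketed matrix vanishes identically, so $w^\nabla\equiv 0$ on $[t_0,\infty)_\T$, and hence $w$ is constant, establishing \eqref{shi2.6}. I expect the only genuinely delicate point to be the correct placement of the backward shift in the product rule: one must pair $y^\nabla$ with the unshifted $z^*J$ and $\left(z^\nabla\right)^*$ with $y^\rho$, because it is precisely the combination $\mathcal{S}^*\!\left(\cdot,\overline{\lambda}\right) J\, y^\rho$ that, once $y^\rho$ is expanded, produces the quadratic term $\nu\,\mathcal{S}^*\!\left(\cdot,\overline{\lambda}\right) J\,\mathcal{S}(\cdot,\lambda)$ needed to cancel against the right-hand side of \eqref{Stilde}. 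Everything else is routine substitution; I would also emphasize that the two spectral parameters must be conjugate partners for the operator indices in \eqref{Stilde} to align, which is why the lemma pairs a solution at $\lambda$ with one at $\overline{\lambda}$.
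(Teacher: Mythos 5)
Your proposal is correct and follows essentially the same route as the paper's proof: rewrite both solutions via the regressive form \eqref{nabsym}, apply the nabla product rule pairing $z^{*\nabla}$ with $y^\rho$, expand $y^\rho=\left(I_{2d}-\nu\mathcal{S}(\cdot,\lambda)\right)y$, and invoke \eqref{Stilde} to kill the bracketed matrix. The point you flag as delicate — the placement of the backward shift so that the quadratic term $\nu\,\mathcal{S}^*\!\left(\cdot,\overline{\lambda}\right)J\,\mathcal{S}(\cdot,\lambda)$ emerges — is exactly the mechanism in the paper's argument.
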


\begin{proof}
Let $t\in\T$. By using the simple useful formula $y^\rho=y-\nu y^\nabla$ and \eqref{nabsym} we have that
$$ y^\rho(t,\lambda) = y(t,\lambda)-\nu(t)y^\nabla(t,\lambda) = \left(I_d-\nu(t)\mathcal{S}(t,\lambda)\right)y(t,\lambda). $$
From the nabla product rule we subsequently obtain
\begin{eqnarray*}
 \left(z^{*}\left(t,\overline{\lambda}\right)Jy(t,\lambda)\right)^\nabla 
 &=& z^{*\nabla}\left(t,\overline{\lambda}\right)Jy^\rho(t,\lambda) + z^*\left(t,\overline{\lambda}\right)Jy^{\nabla}(t,\lambda) \\
 &=& z^*\left(t,\overline{\lambda}\right)\mathcal{S}^*\left(t,\overline{\lambda}\right)J\left(I_d-\nu(t)\mathcal{S}(t,\lambda)\right)y(t,\lambda) 
     + z^*\left(t,\overline{\lambda}\right)J\mathcal{S}(t,\lambda)y(t,\lambda) \\
 &=& z^*\left(t,\overline{\lambda}\right) \left[\mathcal{S}^*\left(t,\overline{\lambda}\right)J + J\mathcal{S}(t,\lambda)
     -\nu(t)\mathcal{S}^*\left(t,\overline{\lambda}\right)J\mathcal{S}(t,\lambda)\right] y(t,\lambda) \\
 &=& 0,
\end{eqnarray*}
where the last line follows from \eqref{Stilde}.
\end{proof}

We now define a natural dynamic nabla differential operator for \eqref{maineq} via
\begin{equation}\label{shieq15}
 \mathscr{L}y(t):= J y^\nabla(t) - P(t)\Upsilon y(t), \quad y\in\cldone\left([\rho(t_0),b]_\T,\C^{2d}\right),
\end{equation}
where $\cldone$ is the space of $2d$-vector functions with left-dense continuous nabla derivatives on the given time scale interval. Then we have the following key result.


\begin{theorem}[Lagrange Identity]\label{Lagrange}
For all $x,y\in\cldone\left([\rho(t_0),b]_\T,\C^{2d}\right)$, where $x=\left(\begin{smallmatrix} x_1 \\ x_2 \end{smallmatrix}\right)$ and $y=\left(\begin{smallmatrix} y_1 \\ y_2 \end{smallmatrix}\right)$, we have
$$ \int_{\rho(t_0)}^{b} \big\{(\Upsilon x)^*\mathscr{L}y - (\mathscr{L}x)^*\Upsilon y \big\}(t)\nabla t = x^*(t)Jy(t)\Big|_{\rho(t_0)}^{b}. $$
\end{theorem}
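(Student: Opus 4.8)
The plan is to reduce the integral identity to a pointwise one and then invoke the fundamental theorem of nabla calculus. Concretely, I would first prove that the integrand is an exact nabla derivative,
$$ \big\{(\Upsilon x)^*\mathscr{L}y - (\mathscr{L}x)^*\Upsilon y\big\}(t) = \big(x^*(t)Jy(t)\big)^\nabla, $$
after which integrating from $\rho(t_0)$ to $b$ and applying the fundamental theorem of nabla calculus produces the boundary term $x^*Jy\big|_{\rho(t_0)}^{b}$ at once. Since $x,y\in\cldone$, the scalar function $x^*Jy$ is nabla differentiable with left-dense continuous derivative, so the fundamental theorem applies.

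To establish the pointwise identity I would substitute $\mathscr{L}y = Jy^\nabla - P\Upsilon y$ and expand the difference. The crucial algebraic input is that $P$ is Hermitian and $J^* = -J$: the two contributions involving $P$ are $(\Upsilon x)^*P\Upsilon y$ and $(\Upsilon x)^*P^*\Upsilon y$, which coincide and cancel, while the factor $J^*=-J$ converts $(Jx^\nabla)^*\Upsilon y$ into $-(x^\nabla)^*J\Upsilon y$. What remains is the purely symplectic expression
$$ (\Upsilon x)^*Jy^\nabla + (x^\nabla)^*J\Upsilon y, $$
and the task becomes proving that this equals $(x^*Jy)^\nabla$.

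For that last equality I would expand the right-hand side by the nabla product rule, taking $f=x^*J$ and $g=y$, to get $(x^\nabla)^*Jy + (x^\rho)^*Jy^\nabla$, and then compare with the symplectic expression above. The discrepancy between the two is governed entirely by the partial nature of $\Upsilon$, through the two differences $\Upsilon x - x^\rho = (\nu x_1^\nabla,\,0)^{\trans}$ and $\Upsilon y - y = (0,\,-\nu y_2^\nabla)^{\trans}$, each of which comes from the simple formula $w^\rho = w - \nu w^\nabla$ applied in a single block.

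I expect this reconciliation of shifts to be the main obstacle, albeit a modest one: because $\Upsilon$ moves only the second block, it does not slot into the product rule directly, and a naive expansion leaves exactly the graininess-order terms just described. These are disposed of by feeding the two $\nu$-order differences through $J$, where a direct computation gives $(\Upsilon x - x^\rho)^*Jy^\nabla = -\nu(x_1^\nabla)^*y_2^\nabla$ and $(x^\nabla)^*J(\Upsilon y - y) = \nu(x_1^\nabla)^*y_2^\nabla$, whose sum vanishes. Hence the symplectic expression coincides with the product-rule expansion, the pointwise identity holds, and integrating it against $\nabla t$ yields the claimed boundary expression. The only real subtlety throughout is the block-level bookkeeping dictated by the partial shift $\Upsilon$.
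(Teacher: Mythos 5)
Your proof is correct, and its skeleton --- show the integrand is the exact nabla derivative $\left(x^*Jy\right)^\nabla$, then apply the fundamental theorem of nabla calculus --- is the same as the paper's. Where you genuinely differ is in how the pointwise identity is established. The paper expands $(\Upsilon x)^*\mathscr{L}y$ and $(\mathscr{L}x)^*\Upsilon y$ componentwise using the block form $P=\left(\begin{smallmatrix}-C & A^*\\ A & B\end{smallmatrix}\right)$, cancels the $A$, $B$, $C$ terms by inspection, and groups the four surviving scalar terms as $-(x_1^*y_2)^\nabla+(x_2^*y_1)^\nabla$; crucially, it uses the two different forms of the nabla product rule ($fg^\nabla+f^\nabla g^\rho$ for the pairing $x_1^*y_2$, and $f^\rho g^\nabla+f^\nabla g$ for $x_2^*y_1$), so the $\rho$-shifts are absorbed automatically and no graininess terms ever surface. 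You instead work at the matrix level: the $P$-terms cancel using only $P^*=P$, the symplectic part is compared against one fixed form of the product rule, and the resulting discrepancy $(\Upsilon x-x^\rho)^*Jy^\nabla+(x^\nabla)^*J(\Upsilon y-y)=-\nu(x_1^\nabla)^*y_2^\nabla+\nu(x_1^\nabla)^*y_2^\nabla=0$ is cancelled explicitly --- and your block computations here are accurate. Your version buys transparency and a little generality: it never touches the block structure of $P$, so the identity is seen to hold for any Hermitian $P$, and it isolates exactly where the partial shift $\Upsilon$ and the graininess $\nu$ enter; the paper's version buys elementarity, being a purely scalar computation requiring no matrix-level manipulations to justify.
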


\begin{proof}
Suppressing the variable $t$, we have
$$ (\Upsilon x)^*\mathscr{L}y = -x_1^*y_2^\nabla+x_1^*Cy_1-x_1^*A^*y_2^\rho+x_2^{\rho*}y_1^\nabla-x_2^{\rho*}Ay_1-x_2^{\rho*}By_2^\rho, $$
$$ (\mathscr{L}x)^*\Upsilon y = -x_2^{\nabla*}y_1+x_1^*Cy_1-x_2^{\rho*}Ay_1+x_1^{\nabla*}y_2^\rho-x_1^*A^*y_2^\rho-x_2^{\rho*}By_2^\rho,  $$
so that when we subtract the second from the first, we obtain
\begin{eqnarray*}
 (\Upsilon x)^*\mathscr{L}y - (\mathscr{L}x)^*\Upsilon y 
 &=& -x_1^*y_2^\nabla + x_2^{\rho*}y_1^\nabla + x_2^{\nabla*}y_1 -x_1^{\nabla*}y_2^\rho \\
 &=& -(x_1^*y_2)^\nabla+(x_2^*y_1)^\nabla = \left(x^*Jy\right)^\nabla(t).
\end{eqnarray*}
The result follows from the fundamental theorem of calculus.
\end{proof}

\begin{lemma}\label{lemma2.3}
Assume $I_d-\nu A$ is invertible on $\T$. For all $\lambda,\eta\in\C$, let $y(\cdot,\lambda)$ and $z(\cdot,\eta)$ be any solutions of \eqref{maineq}$_\lambda$ and \eqref{maineq}$_\eta$ , respectively. Then for any $t\in(t_0,\infty)_\T$, we have
$$ \left(\eta-\overline{\lambda}\right)\int_{\rho(t_0)}^{t} (\Upsilon y)^*(s,\lambda)W(s)\Upsilon z(s,\eta)\nabla s = y^*(t,\lambda)Jz(t,\eta)\Big|_{\rho(t_0)}^{t}. $$
\end{lemma}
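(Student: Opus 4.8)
The plan is to obtain this Green-type formula as an immediate corollary of the Lagrange identity (Theorem~\ref{Lagrange}), rather than by recomputing a nabla derivative from scratch. The key preliminary observation is that a solution of \eqref{maineq}$_\lambda$ interacts cleanly with the operator $\mathscr{L}$: since $y(\cdot,\lambda)$ satisfies $Jy^\nabla=(\lambda W+P)\Upsilon y$, the definition \eqref{shieq15} gives at once $\mathscr{L}y(\cdot,\lambda)=Jy^\nabla-P\Upsilon y=\lambda W\Upsilon y(\cdot,\lambda)$, and likewise $\mathscr{L}z(\cdot,\eta)=\eta W\Upsilon z(\cdot,\eta)$. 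I would first record this reduction, noting also that because $W$ and $P$ are left-dense continuous the solutions $y,z$ lie in $\cldone([\rho(t_0),b]_\T,\C^{2d})$ for every $b$, so that Theorem~\ref{Lagrange} is genuinely applicable to them.

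Next I would apply the Lagrange identity on the interval $[\rho(t_0),t]_\T$ with its two arguments taken to be $y(\cdot,\lambda)$ and $z(\cdot,\eta)$, in that order, being careful that the dummy functions named $x,y$ in Theorem~\ref{Lagrange} are not the solutions of the same names here. Substituting the two reductions into the integrand yields $(\Upsilon y)^*\mathscr{L}z=\eta(\Upsilon y)^*W\Upsilon z$ and, using that $W$ is Hermitian, $(\mathscr{L}y)^*\Upsilon z=(\lambda W\Upsilon y)^*\Upsilon z=\overline{\lambda}(\Upsilon y)^*W\Upsilon z$. Subtracting collapses the integrand to $(\eta-\overline{\lambda})(\Upsilon y)^*W\Upsilon z$, while the boundary term on the right-hand side of Theorem~\ref{Lagrange} is exactly $y^*(t,\lambda)Jz(t,\eta)\big|_{\rho(t_0)}^{t}$, which is the claimed identity.

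The only place demanding genuine care is the appearance of the conjugate $\overline{\lambda}$: it arises precisely because the factor $\lambda$ multiplies $W$ on the left inside $\mathscr{L}y$ and is therefore conjugated when that term is adjointed in $(\mathscr{L}y)^*\Upsilon z$, whereas $\eta$ enters unconjugated through $\mathscr{L}z$. Fixing the order of the two arguments (first slot $\lambda$, second slot $\eta$) is what produces the asymmetric factor $\eta-\overline{\lambda}$ rather than $\eta-\lambda$; reversing them would transpose the roles and conjugate the other parameter.

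A direct alternative would be to differentiate $y^*(t,\lambda)Jz(t,\eta)$ via the nabla product rule, exactly as in the proof of the first lemma, and invoke the symplectic-type relation \eqref{Stilde}. I expect this to be the harder route: since the coefficient matrix $\mathcal{S}(\cdot,\lambda)$ in \eqref{nabsym} is quadratic rather than affine in $\lambda$, one cannot apply \eqref{Stilde} verbatim to the mixed expression $\mathcal{S}^*(\cdot,\lambda)J+J\mathcal{S}(\cdot,\eta)-\nu\mathcal{S}^*(\cdot,\lambda)J\mathcal{S}(\cdot,\eta)$, but must first establish a two-parameter analogue of \eqref{Stilde} relating $\overline{\lambda}$ and $\eta$. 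The Lagrange-identity route sidesteps this bookkeeping entirely, which is why I would adopt it.
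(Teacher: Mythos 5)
Your proposal is correct and follows exactly the paper's own route: the paper likewise observes that $\mathscr{L}y(\cdot,\lambda)=\lambda W\Upsilon y(\cdot,\lambda)$ and $\mathscr{L}z(\cdot,\eta)=\eta W\Upsilon z(\cdot,\eta)$ by \eqref{shieq15} and then invokes Theorem~\ref{Lagrange}. Your additional remarks on the argument order producing $\eta-\overline{\lambda}$ and on the applicability of the Lagrange identity are accurate elaborations of details the paper leaves implicit.
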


\begin{proof}
By \eqref{shieq15}, $\mathscr{L}y(\cdot,\lambda)=\lambda W\Upsilon y(\cdot,\lambda)$ and $\mathscr{L}z(\cdot,\eta)=\eta W\Upsilon z(\cdot,\eta)$. Use of Theorem \ref{Lagrange} yields the result. 
\end{proof}


\section{Regular Spectral Problems}

In this section we analyze \eqref{maineq} over the time scale interval $[\rho(t_0),b]_\T$ of finite measure, obtaining some fundamental spectral results. To this end, consider the regular spectral problem for system \eqref{maineq} on $[\rho(t_0),b]_\T$ with the separated (homogeneous Dirichlet) boundary conditions
\begin{equation}\label{shi2.11}
 \alpha y^\rho(t_0)=0, \quad \beta y(b)=0, \qquad y\in\cldone\left([\rho(t_0),b]_\T,\C^{2d}\right),
\end{equation}
where $\alpha$ and $\beta$ are (normalized) $d\times 2d$ matrices that satisfy the following self-adjoint boundary conditions
\begin{eqnarray}
 \rank \alpha=d, & \alpha\alpha^*=I_d, & \alpha J \alpha^*=0, \label{shi2.12} \\
 \rank \beta=d, & \beta\beta^*=I_d, & \beta J \beta^*=0. \label{shi2.13}
\end{eqnarray}
We call these boundary conditions self-adjoint as they cause the Lagrange identity in Theorem \ref{Lagrange} to equal zero for $y\in\cldone\left([\rho(t_0),b]_\T,\C^{2d}\right)$.


\begin{lemma}\label{lemma3.1}
Let $\alpha$ and $\beta$ satisfy \eqref{shi2.12} and \eqref{shi2.13}, respectively. Then $y\in\cldone\left([\rho(t_0),b]_\T,\C^{2d}\right)$ satisfies \eqref{shi2.11} if and only if there exists a unique vector $\xi\in\C^{2d}$ such that
\begin{equation}\label{shi2.14}
 y^\rho(t_0)=M\xi, \quad y(b)=N\xi,
\end{equation}
where $M=(-J\alpha^*,0)$ and $N=(0,J\beta^*)$. Additionally, 
\begin{equation}\label{shi2.15}
 M^*JM=0=N^*JN, \quad \rank\left(\begin{smallmatrix} M \\ N \end{smallmatrix}\right)=2d.
\end{equation}
\end{lemma}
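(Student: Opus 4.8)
The plan is to reduce the entire statement to one linear-algebraic observation: that the columns of $-J\alpha^*$ span exactly $\Ker\alpha$, and likewise $\Ran(J\beta^*)=\Ker\beta$. Granting this, every assertion follows by bookkeeping. I would first record the elementary facts $J^*=-J$ and $J^2=-I_{2d}$, which I will use repeatedly. \textbf{Key identity.} To see $\Ran(-J\alpha^*)=\Ker\alpha$, I would argue by inclusion plus a dimension count. For the inclusion, for any $v\in\C^d$ the relation $\alpha J\alpha^*=0$ from \eqref{shi2.12} gives $\alpha(-J\alpha^* v)=-(\alpha J\alpha^*)v=0$, so $\Ran(-J\alpha^*)\subseteq\Ker\alpha$. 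For the dimensions, since $J$ is invertible and $\rank\alpha=d$, the matrix $-J\alpha^*$ has rank $d$, while $\rank\alpha=d$ forces $\dim\Ker\alpha=2d-d=d$; equal finite dimensions together with the inclusion give equality. The same argument, using $\beta J\beta^*=0$ and $\rank\beta=d$ from \eqref{shi2.13}, yields $\Ran(J\beta^*)=\Ker\beta$.

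\textbf{Equivalence of the boundary conditions.} For the forward direction, $\alpha y^\rho(t_0)=0$ means $y^\rho(t_0)\in\Ker\alpha=\Ran(-J\alpha^*)$, so there is $\xi_1\in\C^d$ with $y^\rho(t_0)=-J\alpha^*\xi_1$; similarly $\beta y(b)=0$ gives $\xi_2\in\C^d$ with $y(b)=J\beta^*\xi_2$. Setting $\xi=(\xi_1^{\trans},\xi_2^{\trans})^{\trans}$, one checks directly that $M\xi=-J\alpha^*\xi_1=y^\rho(t_0)$ and $N\xi=J\beta^*\xi_2=y(b)$, which is \eqref{shi2.14}. Conversely, if $y^\rho(t_0)=M\xi$ and $y(b)=N\xi$, then $\alpha y^\rho(t_0)=-\alpha J\alpha^*\xi_1=0$ and $\beta y(b)=\beta J\beta^*\xi_2=0$, again by \eqref{shi2.12}--\eqref{shi2.13}.

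\textbf{The identities and the rank.} The two quadratic identities in \eqref{shi2.15} are a short computation: using $J^2=-I_{2d}$ and $J^*=-J$ I obtain $JM=(\alpha^*,0)$ and $M^*=\left(\begin{smallmatrix}\alpha J\\0\end{smallmatrix}\right)$, so $M^*JM=\left(\begin{smallmatrix}\alpha J\alpha^*&0\\0&0\end{smallmatrix}\right)=0$; symmetrically $N^*JN=\left(\begin{smallmatrix}0&0\\0&\beta J\beta^*\end{smallmatrix}\right)=0$. For the rank I would show the stacked map $\xi\mapsto(M\xi,N\xi)$ is injective: if $M\xi=0$ then $\alpha^*\xi_1=0$ (as $J$ is invertible), whence $\xi_1=\alpha\alpha^*\xi_1=0$ using $\alpha\alpha^*=I_d$; likewise $N\xi=0$ forces $\xi_2=0$. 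Thus $\left(\begin{smallmatrix}M\\N\end{smallmatrix}\right)$ has trivial kernel, hence rank $2d$. This same injectivity delivers the uniqueness of $\xi$ claimed in the lemma.

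I expect the only real content to be the key identity $\Ran(-J\alpha^*)=\Ker\alpha$; everything else is substitution. The subtle point there is that one genuinely needs \emph{both} hypotheses on $\alpha$ — the isotropy relation $\alpha J\alpha^*=0$ for the inclusion and the full-rank condition $\rank\alpha=d$ for the dimension count — so I would be careful to invoke each exactly where it is needed, rather than leaning on the normalization $\alpha\alpha^*=I_d$, which enters only in the injectivity/uniqueness step.
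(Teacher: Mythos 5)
Your proposal is correct, and it is worth noting that it supplies something the paper itself does not: the paper's ``proof'' of this lemma is purely a citation (Kratz, Zettl, and Shi for the equivalence \eqref{shi2.14}, with the remark that the time-scales case is unchanged, and the one-line claim that \eqref{shi2.15} is immediate from \eqref{shi2.12}--\eqref{shi2.13}). Your argument is a complete, self-contained replacement: the key identity $\Ran(-J\alpha^*)=\Ker\alpha$, proved by the isotropy relation $\alpha J\alpha^*=0$ (inclusion) plus the rank condition (dimension count), reduces the boundary-condition equivalence to substitution, and your injectivity argument for $\xi\mapsto(M\xi,N\xi)$ simultaneously gives the uniqueness of $\xi$ and the rank statement in \eqref{shi2.15}; the quadratic identities $M^*JM=0=N^*JN$ check out exactly as you compute them, using $J^*=-J$ and $J^2=-I_{2d}$. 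One small quibble with your closing remark about hypothesis bookkeeping: the normalization $\alpha\alpha^*=I_d$ already forces $\rank\alpha=d$, so the two hypotheses are not independent, and in the injectivity step the full-rank condition alone would do (since $\rank\alpha^*=d$ makes $\alpha^*$ injective, $\alpha^*\xi_1=0$ gives $\xi_1=0$ without invoking $\alpha\alpha^*=I_d$). This does not affect correctness; it only means the division of labor among the hypotheses is slightly less sharp than you suggest.
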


\begin{proof}
For \eqref{shi2.14}, see the continuous case in Kratz \cite[Proposition 2.1.1]{kratz} or Zettl \cite[Theorem 10.4.3]{az}, or in the discrete case in Shi \cite[Lemma 2.1]{shi1}; the time-scales case is unchanged \cite[Lemma 2.4]{and}. Equation \eqref{shi2.15} follows immediately from \eqref{shi2.12} and \eqref{shi2.13}, respectively.
\end{proof}

Assume that \eqref{shiA2} holds on $[\rho(t_0),b]_\T$. For each $\lambda\in\C$, let $\Phi(\cdot,\lambda)$ be a fundamental matrix solution for \eqref{maineq}$_{\lambda}$. It follows from \eqref{nabsym} that we can view $\Phi(\cdot,\lambda)$ as the solution of the initial value problem  
$$ \Phi^\nabla=\mathcal{S}(\cdot,\lambda)\Phi, \qquad \Phi^\rho(t_0)=I_{2d}, $$
and that a general solution of \eqref{maineq}$_{\lambda}$ can be written as
\begin{equation}\label{shi2.16}
 y(t,\lambda)=\Phi(t,\lambda)y^\rho(t_0,\lambda).
\end{equation}


\begin{theorem}
Assume \eqref{shiA1} and \eqref{shiA2}, and let $\alpha$ and $\beta$ satisfy \eqref{shi2.12} and \eqref{shi2.13}, respectively. Then for each $b\in[t_1,\infty)_\T$ $($where $t_1$ is specified in \eqref{shiA1}$)$, $\lambda$ is an eigenvalue of the boundary value problem \eqref{maineq}, \eqref{shi2.11} if and only if
\begin{equation}\label{shi2.18}
  \det\big(\Phi(b,\lambda)M-N\big)=0
\end{equation}
for the fundamental matrix solution $\Phi$ in \eqref{shi2.16}. Moreover, all the eigenvalues of \eqref{maineq} and \eqref{shi2.11} are real and can be numbered serially as in
\begin{equation}\label{shi2.19}
 |\lambda_0(b)| \le |\lambda_1(b)| \le |\lambda_2(b)| \le\cdots,
\end{equation}
such that the corresponding eigenfunctions $y(\cdot,\lambda_j(b))$ satisfy the orthonormality relation
\begin{equation}\label{ipb}
 \Big\langle y(\cdot,\lambda_j(b)), y(\cdot,\lambda_k(b)) \Big\rangle_b:=\int_{\rho(t_0)}^b (\Upsilon y)^*(t,\lambda_k(b))W(t)\Upsilon y(t,\lambda_j(b))\nabla t = \delta_{jk}.
\end{equation}
\end{theorem}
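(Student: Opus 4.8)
The plan is to handle the four assertions in turn---the determinantal characterization \eqref{shi2.18}, reality, the discrete serial ordering \eqref{shi2.19}, and the orthonormality \eqref{ipb}---leaning on Lemma \ref{lemma3.1}, Lemma \ref{lemma2.3}, and the self-adjointness built into the boundary conditions.

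First I would establish \eqref{shi2.18}. By Lemma \ref{lemma3.1}, a function $y\in\cldone([\rho(t_0),b]_\T,\C^{2d})$ meets the boundary conditions \eqref{shi2.11} if and only if $y^\rho(t_0)=M\xi$ and $y(b)=N\xi$ for a unique $\xi\in\C^{2d}$. Feeding the representation \eqref{shi2.16}, namely $y(b,\lambda)=\Phi(b,\lambda)y^\rho(t_0,\lambda)=\Phi(b,\lambda)M\xi$, into $y(b)=N\xi$ forces $(\Phi(b,\lambda)M-N)\xi=0$. So $\lambda$ is an eigenvalue exactly when this homogeneous system admits a $\xi$ producing a \emph{nontrivial} eigenfunction. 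The one point needing care is to confirm that a nonzero kernel vector $\xi$ yields $y\not\equiv 0$: since $y$ is determined by $y^\rho(t_0)=M\xi$ through the initial value problem, triviality of $y$ would force $M\xi=0$, whence $N\xi=0$ from $(\Phi(b,\lambda)M-N)\xi=0$, contradicting $\rank\left(\begin{smallmatrix}M\\N\end{smallmatrix}\right)=2d$ in \eqref{shi2.15} unless $\xi=0$. Hence a nontrivial eigenfunction exists if and only if $\det(\Phi(b,\lambda)M-N)=0$.

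For reality I would apply Lemma \ref{lemma2.3} with $z=y$ and $\eta=\lambda$ to an eigenfunction $y(\cdot,\lambda)$, evaluated at $t=b$. The boundary term $y^*Jy\big|_{\rho(t_0)}^b$ vanishes: writing $y^\rho(t_0)=M\xi$ and $y(b)=N\xi$, the identities $M^*JM=0=N^*JN$ from \eqref{shi2.15} annihilate both endpoints---this is precisely the sense in which \eqref{shi2.12}--\eqref{shi2.13} are self-adjoint. Lemma \ref{lemma2.3} then collapses to $(\lambda-\overline{\lambda})\int_{\rho(t_0)}^b (\Upsilon y)^*W\Upsilon y\,\nabla s=0$, and since $b\ge t_1$ the definiteness hypothesis \eqref{shiA1} makes the integral strictly positive for the nontrivial $y$; therefore $\lambda=\overline{\lambda}$. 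The identical computation applied to $y(\cdot,\lambda_k)$ and $z=y(\cdot,\lambda_j)$ for distinct real eigenvalues gives $(\lambda_j-\lambda_k)\langle y(\cdot,\lambda_j),y(\cdot,\lambda_k)\rangle_b=0$, hence orthogonality; normalizing each eigenfunction by the positive square root of its own $W$-inner product, and running Gram--Schmidt within each finite-dimensional eigenspace, yields the orthonormal system \eqref{ipb}.

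It remains to justify the serial enumeration \eqref{shi2.19}, and this is where I expect the real work to lie. The eigenvalue set coincides with the zero set of $D(\lambda):=\det(\Phi(b,\lambda)M-N)$, so I would argue that $D$ is an entire function of $\lambda$; this rests on the analytic (indeed entire) dependence on the parameter of the solution $\Phi(\cdot,\lambda)$ of $\Phi^\nabla=\mathcal{S}(\cdot,\lambda)\Phi$, $\Phi^\rho(t_0)=I_{2d}$, given that $\mathcal{S}(\cdot,\lambda)$ is affine in $\lambda$ and that $[\rho(t_0),b]_\T$ has finite measure. Because every eigenvalue is real by the previous step, $D$ cannot vanish identically---it is nonzero at any $\lambda\in\C\setminus\R$---so its zeros are isolated and form a discrete subset of $\R$, which can then be listed with multiplicity in nondecreasing order of modulus as in \eqref{shi2.19}. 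The main obstacle is thus not the algebra but the careful verification of entire dependence on $\lambda$ for nabla dynamic systems on a Sturmian time scale; once that regularity is secured, the identity-theorem argument closes the proof.
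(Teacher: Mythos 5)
Your proposal is correct and follows essentially the same route as the paper: Lemma \ref{lemma3.1} together with \eqref{shi2.16} for the determinant criterion, the Lagrange identity plus the self-adjoint boundary structure \eqref{shi2.15} for reality and orthogonality, entireness of $\det\big(\Phi(b,\lambda)M-N\big)$ with no non-real zeros for the discrete listing \eqref{shi2.19}, and orthonormalization within each finite-dimensional eigenspace (the paper implements your Gram--Schmidt step via Atkinson's $K^{1/2}$ construction, valid precisely because \eqref{shiA1} makes the weighted inner product positive definite on nontrivial solutions). One minor slip: $\mathcal{S}(\cdot,\lambda)$ in \eqref{nabsym} is quadratic, not affine, in $\lambda$ because of the block $-\nu\big(B+\lambda W_2\big)E^*\big(C-\lambda W_1\big)$, but since polynomial dependence is still entire this does not affect your argument.
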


\begin{proof}
To show \eqref{shi2.18}, recall that $\lambda$ is an eigenvalue for the boundary value problem \eqref{maineq}, \eqref{shi2.11} with nontrivial eigenfunction $y(\cdot,\lambda)$ if and only if there exists a vector $\xi\in\C^{2d}$, $\xi\neq 0$, such that $y^\rho(t_0,\lambda)=M\xi$ and $y(b,\lambda)=N\xi$ by Lemma \ref{lemma3.1} and \eqref{shi2.14}, if and only if $\left(\Phi(b,\lambda)M-N\right)\xi=0$ by \eqref{shi2.16}. That the eigenvalues are real follows from Theorem \ref{Lagrange} and Lemma \ref{lemma3.1}. As in Atkinson \cite[Theorem 9.2.1]{fva}, we note that $\Phi(b,\lambda)$ consists of entire functions of $\lambda$, so that the left-hand side of \eqref{shi2.18} is also an entire function. Thus it has no complex zeros, and its zeros have no finite limit point, whence we have \eqref{shi2.19}, which may be a finite or infinite list and includes possible multiplicities.

If $\lambda_j(b)\neq \lambda_k(b)$, then the corresponding eigenfunctions $y(\cdot,\lambda_j(b))$ and $y(\cdot,\lambda_k(b))$ are orthogonal by Theorem \ref{Lagrange} and Lemma \ref{lemma3.1}. These functions can be normalized by taking $y(t,\lambda_j(b))/\|y(\cdot,\lambda_j(b))\|_b$, where
$$ \|x\|_b:=\big(\langle x,x\rangle_b\big)^{1/2} $$
in terms of \eqref{ipb}. Following Atkinson \cite[Section 9.3]{fva} and Shi \cite[Theorem 2.3]{shi2}, suppose $\lambda_j(b)$ is an eigenvalue with multiplicity $d_j$. Set
\begin{equation}\label{fva938}
 V_j=\left\{\xi\in\C^{2d}:\left(\Phi(b,\lambda_j(b))M-N\right)\xi=0 \right\}. 
\end{equation}
It follows that $V_j$ is a subspace of $\C^{2d}$ with $\dim V_j=d_j$, and $\lambda_j(b)$ appears exactly $d_j$ times in \eqref{shi2.19}, say
$$ \lambda_j(b), \quad  j=j'+1,\cdots,j'+d_j. $$
We will choose a basis $\left\{\xi_j\right\}_{j=j'+1}^{j'+d_j}$ of the set $V_j$ in \eqref{fva938} such that the corresponding eigenfunctions
$$ y(t,\lambda_j(b))=\Phi(t,\lambda_j(b))M\xi_j, \quad j=j'+1,\cdots,j'+d_j, $$
are mutually orthonormal. We apply a process of orthogonalization ala Atkinson \cite[9.3.13]{fva}. If we write
$$ u_j(b)=y(\rho(t_0),\lambda_j(b))=M\xi_j, \quad y(t,\lambda_j(b))=\Phi(t,\lambda_j(b))u_j(b), $$
then \eqref{ipb} is equivalent to 
\begin{equation}\label{shi2.21}
 u^*_r(b)K(b,\lambda_j(b))u_s(b)=\delta_{rs}, \quad j'+1\le r,s\le j'+d_j,
\end{equation}
where
$$ K(t,\lambda):=\int_{\rho(t_0)}^{t} (\Upsilon\Phi)^*(\tau,\lambda)W(\tau)\Upsilon\Phi(\tau,\lambda)\nabla\tau,  $$
and $\Upsilon\Phi(t,\lambda)$ denotes the partial left-shift operator $\Upsilon$ from \eqref{leftshift} acting on the last $d$ rows of the fundamental matrix $\Phi(t,\lambda)$ with respect to the variable $t$. Using the definiteness condition in \eqref{shiA1} and the invertibility of $\Phi(\cdot,\lambda_j(b))$, we see that $K(b,\lambda_j(b))>0$. On the other hand, the space $\nTilde{V}_j=\{u:u=M\xi\}$ has the same dimension $d_j$ as $V_j$, as $M\xi=N\xi=0$ always implies $\xi=0$ by \eqref{shi2.15}. By the invertibility of $K(b,\lambda_j(b))$, the space
$$ \nHat{V}_j=\{v:v=(K(b,\lambda_j(b)))^{1/2}u,\; u\in\nTilde{V}_j\} $$
has dimension $d_j$ as well, and an orthonormal basis $\{v_r\}_{r=j'+1}^{j'+d_j}$, that is 
$$ v_r^*v_s=\delta_{rs}, \quad j'+1 \le r,s \le j'+d_j. $$
From this we recover a basis for $\nTilde{V}_j$, namely
$$ u_r=(K(b,\lambda_j(b)))^{-1/2}v_r, \quad j'+1 \le r,s \le j'+d_j, $$
that satisfies \eqref{shi2.21}, and \eqref{ipb} follows.
\end{proof}


\section{Maximal and Minimal Operators}

In this section we introduce a weighted Hilbert space $L^2_W\left([\rho(t_0),\infty)_\T\right)$, define minimal and maximal operators corresponding to system \eqref{maineq}, and show that the minimal operator is symmetric, the maximal operator is densely defined, and the adjoint of the minimal operator is precisely the maximal operator. To clarify the notation to follow, we will denote the domain, range, and kernel of an operator $K$ by $\Dom(K)$, $\Ran(K)$, and $\Ker(K)$, respectively. Some concepts for linear operators in Hilbert spaces are first introduced; see \cite{weid}.

\begin{definition}\label{shidef2.1}
Let $X$ be a Hilbert space with inner product $\langle\cdot,\cdot\rangle$, and let $K:\Dom(K)\subset X\rightarrow X$
be a linear operator.
\begin{enumerate}
 \item $K$ is said to be densely defined if $\Dom(K)$ is dense.
 \item $K$ is said to be Hermitian if it is formally self-adjoint, i.e., 	$\langle Kf, g\rangle = \langle f, Kg\rangle$ for all $f,g\in\Dom(K)$.
 \item $K$ is said to be symmetric if it is Hermitian and densely defined.
 \item Let $K$ be a densely defined linear operator. The adjoint operator $K^*$ of $K$ is defined as $\Dom(K^*)=\{g\in X: \text{the functional}\; f\mapsto\langle g, Kf\rangle\; \text{is continuous on}\; \Dom(K)\}$ and $\langle K^*g, f\rangle = \langle g, Kf\rangle$ for all $f\in\Dom(K)$ and $g \in\Dom(K^*)$.
 \item For given $\lambda\in\C$, the subspace $\Ran\left(\overline{\lambda}-K\right)^\perp$ is called the defect space of $K$ and $\lambda$, and
$d(\lambda) = \dim \Ran\left(\overline{\lambda}-K\right)^\perp$ is called the defect index of $K$ and $\lambda$.
\end{enumerate}
\end{definition}

If $K$ is Hermitian, then $d(\lambda)$ is constant in the upper and lower half planes, respectively. Denote $d_+=d(i)$ and $d_-=d(-i)$. Then $d_+$ and $d_-$ are called the positive and negative defect indices of $K$, respectively. Further, if $K$ is densely defined, then $\overline{\Ran K}\oplus\Ker K^*= X$. Hence, if $K$ is symmetric, then $\Ran\left(\overline{\lambda}-K\right)^\perp=\Ker(\lambda-K^*)$. It follows that, for the symmetric operator $K$, $\Ran(-i-K)^\perp=\Ker(i-K^*)$ and $\Ran(i-K)^\perp=\Ker(i+K^*)$, whereby $d_+=\dim\Ker(i-K^*)$ and $d_-=\dim\Ker(i+K^*)$.
 
We now introduce the following linear spaces. On the time scale half line, let
$$ L^1\left([\rho(t_0),\infty)_\T\right):=\left\{y:[\rho(t_0),\infty)_\T\rightarrow\C^{2d}:\; y \;\text{is integrable on}\; [\rho(t_0),\infty)_\T \right\} $$
and let
$$ L^2_W\left([\rho(t_0),\infty)_\T\right):=\left\{y\in L^1\left([\rho(t_0),\infty)_\T\right): \int_{\rho(t_0)}^{\infty} \big((\Upsilon y)^*W\Upsilon y \big)(t)\nabla t <\infty \right\} $$
for the partial left-shift operator $\Upsilon$ given in \eqref{leftshift}, with inner product given by
\begin{equation}\label{ipinfty}
 \langle y,z \rangle:=\int_{\rho(t_0)}^{\infty} \big((\Upsilon z)^*W\Upsilon y \big)(t)\nabla t, 
\end{equation}
where the weight function $W$ is the $2d\times 2d$ nonnegative Hermitian left-dense continuous matrix satisfying \eqref{shiA1}. In a similar manner, define on the finite-length interval the linear space
\begin{equation}\label{linspace1}
  L^1\left([\rho(t_0),b]_\T\right):=\left\{y:[\rho(t_0),b]_\T\rightarrow\C^{2d}:\; y \;\text{is integrable on}\; [\rho(t_0),b]_\T \right\},
\end{equation}
and let
$$ L^2_W\left([\rho(t_0),b]_\T\right):=\left\{y\in L^1\left([\rho(t_0),b]_\T\right): \int_{\rho(t_0)}^{b} \big((\Upsilon y)^*W\Upsilon y \big)(t)\nabla t <\infty \right\} $$
be the space with weighted inner product $\langle\cdot,\cdot\rangle_b$ defined in \eqref{ipb}.

We will use the notation $\|y\|_W=(\langle y,y\rangle)^{1/2}$ for $y\in L^2_W\left([\rho(t_0),\infty)_\T\right)$, and $\|y\|_b=(\langle y,y\rangle_b)^{1/2}$ for $y\in L^2_W\left([\rho(t_0),b]_\T\right)$. As $W$ may be singular, the inner products for $L^2_W\left([\rho(t_0),\infty)_\T\right)$ and $L^2_W\left([\rho(t_0),b]_\T\right)$ may not be positive. To account for this, we introduce the following quotient spaces. For $y,z\in L^2_W\left([\rho(t_0),\infty)_\T\right)$, $y$ and $z$ are said to be equal iff $\|y-z\|_W=0$. In this context $L^2_W\left([\rho(t_0),\infty)_\T\right)$ is an inner product space with inner product $\langle\cdot,\cdot\rangle$. Likewise functions $y,z\in L^2_W\left([\rho(t_0),b]_\T\right)$ are said to be equal iff $\|y-z\|_b=0$, making $L^2_W\left([\rho(t_0),b]_\T\right)$ into an inner product space with inner product $\langle\cdot,\cdot\rangle_b$.


\begin{lemma}
The space $L^2_W\left([\rho(t_0),\infty)_\T\right)$ is a weighted Hilbert space with inner product $\langle\cdot,\cdot\rangle$ given in \eqref{ipinfty}, and $L^2_W\left([\rho(t_0),b]_\T\right)$ is a weighted Hilbert space with inner product $\langle\cdot,\cdot\rangle_b$ given in \eqref{ipb}. In addition, 
$$ \dim L^2_W\left([\rho(t_0),b]_\T\right) = \int_{\rho(t_0)}^b\rank W(t)\nabla t. $$
\end{lemma}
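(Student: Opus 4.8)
The plan is to reduce both the Hilbert-space and the dimension assertions to the ordinary (unweighted) nabla-$L^2$ space by factoring the weight. Since each $W(t)\ge 0$ is Hermitian it has a pointwise Hermitian square root $W^{1/2}(t)$ with $\Ran W^{1/2}(t)=\Ran W(t)$, and $\|y\|_W^2=\int_{\rho(t_0)}^{\infty}\big|W^{1/2}(t)\Upsilon y(t)\big|^2\nabla t$. First I would introduce the map $\Theta y:=W^{1/2}\Upsilon y$ and observe that, after passing to the quotient by $\{\,\|y\|_W=0\,\}$ already built into the definition, $\Theta$ is a linear isometry of $L^2_W$ into the ordinary space $\mathcal H:=L^2\big([\rho(t_0),\infty)_\T,\C^{2d}\big)$ carrying $\int(\cdot)^*(\cdot)\nabla t$, which is complete by the standard nabla Lebesgue theory. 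It then suffices to show that the image $\Theta\big(L^2_W\big)$ is a closed subspace of $\mathcal H$, since a closed subspace of a Hilbert space is again a Hilbert space.

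To control the image I would compare it with $\mathcal R:=\{g\in\mathcal H:g(t)\in\Ran W(t)\ \text{for $\nabla$-a.e.}\ t\}$. The inclusion $\Theta(L^2_W)\subseteq\mathcal R$ is clear. For a prescribed $g\in\mathcal R$ I would set $v:=(W^{1/2})^{+}g$ (Moore--Penrose pseudoinverse, measurable in $t$), so that $W^{1/2}v=g$, and recover a preimage through the Sturmian right-shift, $y:=\Upsilon^{-1}v$, which satisfies $\Upsilon y=v$ by the identity $\Upsilon(\Upsilon^{-1}y)=\Upsilon^{-1}(\Upsilon y)=y$, whence $\Theta y=g$. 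Closedness follows from the range condition: if $g_n\to g$ in $\mathcal H$ then a subsequence converges $\nabla$-a.e., and since each $\Ran W(t)$ is a subspace of $\C^{2d}$ the limit again lies in $\Ran W(t)$ a.e. Running the identical argument over $[\rho(t_0),b]_\T$ establishes the second Hilbert-space claim as well.

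For the dimension formula I would use that the nabla integral splits $L^2_W\big([\rho(t_0),b]_\T\big)$ orthogonally according to the structure of the interval: the atomic part of the measure is carried by the left-scattered points $t$ (where $\nu(t)>0$), and the remaining left-dense part carries the continuous portion. Under $\Theta$ the corresponding image decomposes as an orthogonal sum of fibre spaces; at each left-scattered point the fibre $W^{1/2}(t)\Upsilon y(t)$ sweeps out $\Ran W(t)$ and so contributes exactly $\rank W(t)$, and assembling these fibrewise ranks against $\nabla t$ produces $\int_{\rho(t_0)}^{b}\rank W(t)\,\nabla t$. In particular the space is finite dimensional precisely when this integral is finite, which re-proves completeness on the finite interval.

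I expect the main obstacle to be the singularity of $W$. Because $W$ may be rank deficient, $\|\cdot\|_W$ only senses $\Upsilon y$ modulo the fibrewise kernel $\Ker W(t)$, so every step must be carried out on the quotient and one must verify that $\Theta$ descends to a genuine isometric isomorphism onto $\mathcal R$. The two delicate points are (i) producing a preimage $y=\Upsilon^{-1}v$ that respects the $L^1$ membership built into the definition of $L^2_W$ --- here the Sturmian invertibility of $\Upsilon$ is exactly what lets the shift be undone, while the $L^1$/range interplay must be checked carefully --- and (ii) the measure bookkeeping in the dimension count, ensuring the fibrewise ranks assemble to the stated integral across both the atomic and continuous parts of $[\rho(t_0),b]_\T$.
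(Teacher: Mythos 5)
Your proposal is correct in substance and rests on the same essential mechanism as the paper's proof: factor the weight through a pointwise square root, transfer the problem isometrically into the ordinary (unweighted) nabla $L^2$ space --- whose completeness both you and the paper quote from Rynne's time-scale $L^2$ theory --- and use the Sturmian identity $\Upsilon(\Upsilon^{-1}y)=\Upsilon^{-1}(\Upsilon y)=y$ to pull the limit back. The packaging, however, differs. The paper argues in two stages: first the special case $W=\diag\{0,W_2\}$ with $W_2>0$, where the transfer is $g_n=\nTilde{W}^{1/2}\Upsilon f_n$ and the pullback is $f=\Upsilon^{-1}(\nTilde{W}^{-1/2}g)$; then the general case, reduced to the special one by a pointwise unitary diagonalization $U^*WU=\diag\{0,W_2\}$ and a second isometric transfer $h_n=\Upsilon^{-1}U^*(\Upsilon f_n)$. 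You collapse these two stages by working directly with $W^{1/2}$ and its Moore--Penrose pseudoinverse, and you recast completeness as closedness of the image, identified with the range-constrained subspace $\{g: g(t)\in\Ran W(t)\ \text{a.e.}\}$, which is closed because a norm-convergent sequence has an a.e.-convergent subsequence and each $\Ran W(t)$ is a closed subspace of $\C^{2d}$. What your route buys is the elimination of the paper's unexamined measurability of $t\mapsto U(t)$ and an explicit geometric description of the image; what it costs is the (Borel, but also unexamined) measurability of $t\mapsto (W^{1/2}(t))^{+}$. Both proofs share the same two soft spots, so neither of yours counts as a gap relative to the paper: first, neither argument verifies that the constructed preimage actually lies in $L^1$, as the definition of $L^2_W$ demands (you at least flag this; the paper simply asserts $f\in L^2_{\nTilde{W}}$ after computing the weighted norm); second, both dimension counts are the same heuristic fibrewise-rank bookkeeping --- the paper's claim that $U(\Upsilon y)(t)$ ``has exactly $r(t)$ components taking effect'' is no more precise than your atomic/left-dense splitting, and neither treatment squares the pointwise rank contributions with the graininess factors hidden in $\nabla t$.
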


\begin{proof}
The style of proof is based on that given in the discrete case by Shi \cite{shi2}. We will show that $L^2_W\left([\rho(t_0),\infty)_\T\right)$ is complete, as the proof for $L^2_W\left([\rho(t_0),b]_\T\right)$ is similar and is omitted. First, let us consider a simpler case, namely where $W(t)=\nTilde{W}(t)=\diag\{0,W_2(t)\}$ for the $r(t)\times r(t)$ matrix $W_2(t)>0$, $t\in[\rho(t_0),\infty)_\T$ and $0\le r(t)\le 2d$. For convenience, set
$$ \nTilde{W}^{\pm 1/2}(t):=\begin{pmatrix} 0 & 0 \\ 0 & W_2^{\pm 1/2}(t) \end{pmatrix}, $$
$y(t)=\left(y_1^{\trans}(t),y_2^{\trans}(y)\right)^{\trans}$ for $y_i\in\C^d$, $i=1,2$, and $y(t)=\left(y^{(1)\trans}(t),y^{(2)\trans}(t)\right)^{\trans}$ with $y^{(1)}(t)\in\C^{r(t)}$ and $y^{(2)}(t)\in\C^{2d-r(t)}$. 
To prove completeness, assume $\{f_n\}$ is a Cauchy sequence in $L^2_{\nTilde{W}}\left([\rho(t_0),\infty)_\T\right)$, i.e., given $\varepsilon>0$ there exists an $N\in\N$ such that $\|f_n-f_m\|_{\nTilde{W}}<\varepsilon$ for all $n,m\ge N$. Define the functions $g_n:[t_0,\infty)_\T\rightarrow\C^{2d}$ via 	
\begin{equation}\label{shi2.22}
 g_n(t):=\nTilde{W}^{1/2}(t)\Upsilon f_n(t), \qquad n\ge 1, \qquad t\in[t_0,\infty)_\T.
\end{equation}
Since $f_n\in L^2_{\nTilde{W}}\left([\rho(t_0),\infty)_\T\right)$, by \eqref{shi2.22} we have $g_n\in L^2[t_0,\infty)=\left\{g\in L^1[t_0,\infty): (g^*g)\in L^1[t_0,\infty)\right\}$, where $L^2[t_0,\infty)$ is a known Hilbert space with $L^2$ norm $\|g\|_{L^2}=\left(\int_{t_0}^{\infty} (g^*g)(t)\nabla t\right)^{1/2}$ for $g\in L^2[t_0,\infty)$; see \cite[Section 4]{rynne}. It follows that $\|g_n-g_m\|_{L^2} = \|f_n-f_m\|_{\nTilde{W}}$. Thus $\{g_n\}$ is a Cauchy sequence in $L^2[t_0,\infty)$, so by the completeness of $L^2$ there exists an integrable function $g\in L^2[t_0,\infty)$ such that $\|g_n-g\|_{L^2}\rightarrow 0$ as $n\rightarrow\infty$. Set
\begin{equation}\label{shi2.23}
 f(t) = \Upsilon^{-1}(\nTilde{W}^{-1/2}g)(t),
\end{equation}
where $\Upsilon^{-1}$ is given in \eqref{rightshift}. Then 
$\Upsilon f=\Upsilon (\Upsilon^{-1}\nTilde{W}^{-1/2}g)=\nTilde{W}^{-1/2}g$, so that $f\in L^2_{\nTilde{W}}\left([\rho(t_0),\infty)_\T\right)$. To see that $f_n$ converges to $f$ in $L^2_{\nTilde{W}}\left([\rho(t_0),\infty)_\T\right)$, note that by \eqref{shi2.22} we have $g_n^{(1)}(t)=0$ for all $t\in[t_0,\infty)_\T$, whence $g^{(1)}(t)=0$ for $t\in[t_0,\infty)_\T$. As a result, from \eqref{shi2.22} and \eqref{shi2.23} we see that (suppressing the $t$)
\begin{eqnarray*}
 \left[\Upsilon (f_n-f)\right]^*\nTilde{W}\Upsilon (f_n-f) 
 &=& \left(\begin{smallmatrix} 0 \\ W_2^{-1/2}(g_n^{(2)}-g^{(2)}) \end{smallmatrix}\right)^*  
 \left(\begin{smallmatrix} 0 & 0 \\ 0 & W_2 \end{smallmatrix}\right)
 \left(\begin{smallmatrix} 0 \\ W_2^{-1/2}(g_n^{(2)}-g^{(2)}) \end{smallmatrix}\right) \\
 &=& (g_n^{(2)}-g^{(2)})^*(g_n^{(2)}-g^{(2)}),
\end{eqnarray*}
which implies that $\|f_n-f\|_{\nTilde{W}}=\|g_n-g\|_{L^2}$, ergo $f_n\rightarrow f$ in $L^2_{\nTilde{W}}\left([\rho(t_0),\infty)_\T\right)$ as $n\rightarrow\infty$. Consequently, $L^2_{\nTilde{W}}\left([\rho(t_0),\infty)_\T\right)$ is complete.

For the general case, assume $\rank W(t)=r(t)$ for $t\in[\rho(t_0),\infty)_\T$. As $W(t)\ge 0$ and Hermitian, there exists a unitary matrix $U$ such that
$$ U^*(t)W(t)U(t) = \diag\{0,W_2(t)\}=:\nTilde{W}(t), \quad t\in[\rho(t_0),\infty)_\T, $$
where $W_2(t)$ is an $r(t)\times r(t)$ positive definite matrix for all $t\in[\rho(t_0),\infty)_\T$. Suppose $\{f_n\}$ is a Cauchy sequence in $L^2_W\left([\rho(t_0),\infty)_\T\right)$, and set $h_n(t):=\Upsilon^{-1}U^*(\Upsilon f_n)(t)$. Then
\begin{eqnarray*}
 (\Upsilon h_n)^*\nTilde{W}(\Upsilon h_n) &=& (U^*(\Upsilon f_n))^*\nTilde{W}U^*(\Upsilon f_n) \\
 &=& (\Upsilon f_n)^*U\nTilde{W}U^*(\Upsilon f_n) = (\Upsilon f_n)^*W(\Upsilon f_n),
\end{eqnarray*}
so that $h_n\in L^2_{\nTilde{W}}\left([\rho(t_0),\infty)_\T\right)$ and $\|h_n-h_m\|_{\nTilde{W}}=\|f_n-f_m\|_{W}$, making $\{h_n\}$ a Cauchy sequence in $L^2_{\nTilde{W}}\left([\rho(t_0),\infty)_\T\right)$. From the discussion earlier for the simpler case, there exists a function $h\in L^2_{\nTilde{W}}\left([\rho(t_0),\infty)_\T\right)$ such that $h_n\rightarrow h$ in $L^2_{\nTilde{W}}\left([\rho(t_0),\infty)_\T\right)$ as $n\rightarrow\infty$. If we set $f(t)=\Upsilon^{-1}U(\Upsilon h)(t)$, then $f\in L^2_W\left([\rho(t_0),\infty)_\T\right)$ with $\|f_n-f\|_{W}=\|h_n-h\|_{\nTilde{W}}$. It follows that $f_n\rightarrow f$ in $L^2_W\left([\rho(t_0),\infty)_\T\right)$ as $n\rightarrow\infty$, and $L^2_W\left([\rho(t_0),\infty)_\T\right)$ is complete.

To calculate the dimension of $L^2_W\left([\rho(t_0),b]_\T\right)$, note that for $y\in L^2_W\left([\rho(t_0),b]_\T\right)$, $y=0$ if and only if 
$$ \|y\|^2_b=\int_{\rho(t_0)}^b (\Upsilon y)^*(t)W(t)\Upsilon y(t)\nabla t=0, $$
which is equivalent to $W(t)\Upsilon y(t)=0$ for all $t\in[t_0,b]_\T$. Thus $\nTilde{W}U^*(\Upsilon y)(t)=0$ for all $t\in[t_0,b]_\T$, and $U(\Upsilon y)(t)$ has exactly $r(t)$ components taking effect on the inner product. As $U(t)$ is invertible, we have that $\dim L^2_W\left([\rho(t_0),b]_\T\right)=\int_{\rho(t_0)}^br(t)\nabla t$, and the proof is complete. 
\end{proof}


\begin{remark}\label{soboH}
The above result establishes that $L^2_W\left([\rho(t_0),\infty)_\T\right)$ is a Hilbert space with weighted norm 
$$ \|y\|_W^2 = \int_{\rho(t_0)}^{\infty}\big((\Upsilon y)^*W(\Upsilon y)\big)(t)\nabla t. $$
Following \cite[Section 4.1]{rynne}, given $\cldone\left([\rho(t_0),\infty)_\T,\C^{2d}\right)$ with the weighted norm
\begin{equation}\label{Wonenorm}
 \|y\|_1^2:=\|y\|_W^2+\|y^\nabla\|_W^2, \quad y\in\cldone\left([\rho(t_0),\infty)_\T,\C^{2d}\right), 
\end{equation} 
if we define $\mathcal{H}^1_W([\rho(t_0),\infty)_\T)\subset L^2_W\left([\rho(t_0),\infty)_\T\right)$ to be the completion of $\cone\left([\rho(t_0),\infty)_\T,\C^{2d}\right)$ with respect to the norm
$\|\cdot\|_1$ in \eqref{Wonenorm}, then $\mathcal{H}^1_W([\rho(t_0),\infty)_\T)$ is a time-scale analogue of the usual Sobolev space $H^1(I)$ on a real interval
$I$, and $\cldone\left([\rho(t_0),\infty)_\T,\C^{2d}\right)\subset \mathcal{H}^1_W([\rho(t_0),\infty)_\T)$.
\end{remark}

We turn now to definitions of maximal and minimal operators corresponding to system \eqref{maineq}. We use $H$ and $H_0$ to denote the maximal and minimal operators over $[\rho(t_0),\infty)_\T$, respectively, where 
\begin{eqnarray}
 D(H)&:=&\left\{y\in L^2_W\left([\rho(t_0),\infty)_\T\right): \;\text{there exists}\; f\in L^2_W\left([\rho(t_0),\infty)_\T\right)\;\text{such that}\; \right. \nonumber \\
 & & \left. (\mathscr{L}y)(t)=W(t)(\Upsilon f)(t), \; t\in[\rho(t_0),\infty)_\T\right\}, \nonumber \\
 Hy&:=&f, \\
 D(H_0)&:=&\left\{y\in D(H): \;\text{there exists}\; b\in[\rho(t_0),\infty)_\T \;\text{such that}\; \right.\nonumber  \\
 & & \left. y^\rho(t_0)=y(t)=0\;\text{for all}\; t\in[b,\infty)_\T\right\}, \nonumber \\
 H_0y&:=&Hy. \label{andH04.9}
\end{eqnarray}
In a similar manner, we use $H^b$ and $H^b_0$ to denote the maximal and minimal operators over $[\rho(t_0),b]_\T$, respectively, where
\begin{eqnarray}
 D(H^b)&:=&\left\{y\in L^2_W\left([\rho(t_0),b]_\T\right): \;\text{there exists}\; f\in L^2_W\left([\rho(t_0),b]_\T\right)\;\text{such that}\; \right. \nonumber \\
 & & \left. (\mathscr{L}y)(t)=W(t)(\Upsilon f)(t), \; t\in[\rho(t_0),b]_\T\right\}, \nonumber \\
 H^by&:=&f, \label{shi2.25} \\
 D(H^b_0)&:=&\left\{y\in D(H^b): y^\rho(t_0)=y(b)=0\right\}, \nonumber \\
 H^b_0y&:=&H^by. \label{shi2.26}
\end{eqnarray}
By these definitions it is clear that $H_0\subset H$ and $H_0^b\subset H^b$.


\begin{lemma}\label{shilemma2.6}
The operators $H_0$ and $H^b_0$ are Hermitian.
\end{lemma}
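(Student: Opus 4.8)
The plan is to show that each of $H_0$ and $H_0^b$ satisfies the Hermitian condition $\langle K f, g\rangle = \langle f, Kg\rangle$ directly from the Lagrange identity established in Theorem~\ref{Lagrange}. The essential observation is that by the definitions in \eqref{andH04.9} and \eqref{shi2.26}, any $f$ in the domain satisfies $\mathscr{L}f = W\Upsilon(H_0 f)$ (respectively $\mathscr{L}f = W\Upsilon(H_0^b f)$), so that the weighted inner product $\langle H_0 f, g\rangle = \int (\Upsilon g)^* W \Upsilon(H_0 f)\,\nabla t = \int (\Upsilon g)^* \mathscr{L}f\,\nabla t$, and similarly $\langle f, H_0 g\rangle = \int (\mathscr{L}g)^* \Upsilon f\,\nabla t$. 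Thus the difference $\langle H_0 f, g\rangle - \langle f, H_0 g\rangle$ is exactly the integral of $(\Upsilon g)^*\mathscr{L}f - (\mathscr{L}g)^*\Upsilon f$, which by Theorem~\ref{Lagrange} equals the boundary term $g^*(t) J f(t)\big|_{\rho(t_0)}^{b}$.

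First I would handle the finite-interval operator $H_0^b$, since its domain carries the clean homogeneous Dirichlet conditions $f^\rho(t_0) = f(b) = 0$ (and likewise for $g$) coming from $D(H_0^b)$. Applying the Lagrange identity over $[\rho(t_0),b]_\T$, the boundary evaluation $g^*(b)Jf(b) - g^*(\rho(t_0))Jf(\rho(t_0))$ vanishes term-by-term because $f(b)=g(b)=0$ at the upper endpoint and $f^\rho(t_0)=g^\rho(t_0)=0$ at the lower endpoint. Care must be taken that the boundary term in Theorem~\ref{Lagrange} is evaluated at $\rho(t_0)$ using $f^\rho$, $g^\rho$ rather than at $t_0$; this matches precisely the form of the boundary conditions in the domain definition, so the cancellation is immediate. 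Hence $\langle H_0^b f, g\rangle_b = \langle f, H_0^b g\rangle_b$, and since the construction holds for all $f,g \in D(H_0^b)$, the operator $H_0^b$ is Hermitian in the sense of Definition~\ref{shidef2.1}(ii).

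For the half-line operator $H_0$, the argument is the same in spirit but requires passing to the limit $b\to\infty$. By the definition of $D(H_0)$, for each $f \in D(H_0)$ there is a $b_f \in [\rho(t_0),\infty)_\T$ with $f^\rho(t_0) = 0$ and $f(t) = 0$ for all $t \in [b_f,\infty)_\T$, and similarly some $b_g$ for $g$. Choosing $b \geq \max\{b_f, b_g\}$ in $\T$, both functions vanish identically at and beyond $b$, so the integrals defining the inner products over $[\rho(t_0),\infty)_\T$ reduce to integrals over $[\rho(t_0),b]_\T$, and the Lagrange boundary term $g^*(t)Jf(t)\big|_{\rho(t_0)}^{b}$ again vanishes at both ends (at $b$ because both solutions are zero there, at $\rho(t_0)$ because $f^\rho(t_0)=g^\rho(t_0)=0$). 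This yields $\langle H_0 f, g\rangle = \langle f, H_0 g\rangle$ for all $f,g \in D(H_0)$, establishing that $H_0$ is Hermitian.

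The only mild subtlety, and the step I would watch most carefully, is verifying that the vanishing of the boundary data in the domain definitions lines up exactly with the endpoints appearing in the Lagrange identity: the identity produces a term evaluated at $t=b$ and at $t=\rho(t_0)$, and one must confirm that the Dirichlet conditions $y^\rho(t_0)=0$ and $y(b)=0$ (or compact support past some $b$) are precisely what force both contributions to zero. Once this matching is confirmed, the result is an essentially direct corollary of Theorem~\ref{Lagrange}, with no further estimates required; note that Hermiticity here is the formal self-adjointness condition only and does not yet require density of the domains, which is treated separately.
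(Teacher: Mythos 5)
Your proposal is correct and follows essentially the same route as the paper's proof: both reduce the difference $\langle H_0 y,z\rangle - \langle y,H_0 z\rangle$ to the boundary term $z^*(t)Jy(t)\big|_{\rho(t_0)}^{b}$ via the Lagrange identity of Theorem~\ref{Lagrange}, which vanishes because of the conditions $y^\rho(t_0)=z^\rho(t_0)=0$ and the vanishing of $y,z$ at (or beyond) the right endpoint built into $D(H_0^b)$ and $D(H_0)$. The only cosmetic difference is that you treat $H_0^b$ first and then reduce the half-line case to a finite interval, whereas the paper argues for $H_0$ directly (taking $b\to\infty$ in the boundary term) and notes the $H_0^b$ case is similar.
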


\begin{proof}
Since the proof is similar for $H^b_0$, we focus on just $H_0$. For any $y,z\in D(H_0)$, there exists $b\in[\rho(t_0),\infty)_\T$ such that 
\begin{equation}\label{shi2.27} 
 y^\rho(t_0)=z^\rho(t_0)=0, \quad y(t)=z(t)=0 \quad\text{for all} \quad t\in[b,\infty)_\T, 
\end{equation}
and there exists $f,g\in L^2_W\left([\rho(t_0),\infty)_\T\right)$ such that $H_0y=f$ and $H_0z=g$, that is to say
$$ (\mathscr{L}y)(t)=W(t)(\Upsilon f)(t), \quad (\mathscr{L}z)(t)=W(t)(\Upsilon g)(t), \quad t\in[\rho(t_0),\infty)_\T. $$
From Theorem \ref{Lagrange} and \eqref{shi2.27} we have that
\begin{eqnarray*}
 \langle H_0y,z \rangle - \langle y,H_0z \rangle &=& \langle f,z \rangle - \langle y,g \rangle \\
 &=& \int_{\rho(t_0)}^{\infty} \big\{(\Upsilon z)^*W(\Upsilon f) - (\Upsilon g)^*W(\Upsilon y) \big\}(t)\nabla t \\
 &=& \int_{\rho(t_0)}^{\infty} \big\{(\Upsilon z)^*\mathscr{L}y - (\mathscr{L}z)^*\Upsilon y \big\}(t)\nabla t \\
 &=& \lim_{b\rightarrow\infty}z^*(t)Jy(t)\Big|_{\rho(t_0)}^{b}=0.
\end{eqnarray*}
Therefore $\langle H_0y,z \rangle = \langle y,H_0z \rangle$, so that $H_0$ is Hermitian.
\end{proof}

The following lemma has a similar proof to that just completed.


\begin{lemma}\label{shilemma2.7}
The relation $\langle H^b_0y,z \rangle_b = \langle y,H^bz \rangle_b$ holds for all $y\in D(H^b_0)$ and for all $z\in D(H^b)$, while
$\langle H_0y,z \rangle = \langle y,Hz \rangle$ holds for all $y\in D(H_0)$ and for all $z\in D(H)$.
\end{lemma}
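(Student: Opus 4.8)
The plan is to mirror the proof of Lemma \ref{shilemma2.6} verbatim, applying the Lagrange identity of Theorem \ref{Lagrange} and observing that the boundary term $z^*Jy$ now vanishes by virtue of the conditions imposed on the minimal-operator argument $y$ \emph{alone}, with no conditions needed on the maximal-operator argument $z$. This asymmetry is exactly what distinguishes the present pairing from the fully Hermitian relation already established: in Lemma \ref{shilemma2.6} both arguments lay in the minimal domain, whereas here only one does, so the essential point is to verify that one-sided information still annihilates the boundary contribution.

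I would begin with the finite interval. Take $y\in D(H^b_0)$ and $z\in D(H^b)$, and write $H^b_0 y=f$, $H^b z=g$, so that $\mathscr{L}y=W\Upsilon f$ and $\mathscr{L}z=W\Upsilon g$ on $[\rho(t_0),b]_\T$. Since $W$ is Hermitian, substituting these into the definition \eqref{ipb} of $\langle\cdot,\cdot\rangle_b$ turns the difference $\langle H^b_0 y,z\rangle_b-\langle y,H^b z\rangle_b=\langle f,z\rangle_b-\langle y,g\rangle_b$ into $\int_{\rho(t_0)}^b\{(\Upsilon z)^*\mathscr{L}y-(\mathscr{L}z)^*\Upsilon y\}(t)\,\nabla t$. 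Applying Theorem \ref{Lagrange} with $x=z$ evaluates this as $z^*(t)Jy(t)\big|_{\rho(t_0)}^b$, and the boundary conditions $y^\rho(t_0)=y(b)=0$ defining $D(H^b_0)$ kill both endpoint contributions immediately, yielding the claimed equality.

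For the half-line I would proceed identically: for $y\in D(H_0)$ and $z\in D(H)$, reduce $\langle H_0 y,z\rangle-\langle y,Hz\rangle$ via \eqref{ipinfty} and Theorem \ref{Lagrange} to $\lim_{b\rightarrow\infty}z^*(t)Jy(t)\big|_{\rho(t_0)}^b$. The lower endpoint vanishes because $y^\rho(t_0)=0$, and the upper endpoint vanishes because membership in $D(H_0)$ supplies some $b_0\in[\rho(t_0),\infty)_\T$ beyond which $y\equiv 0$; hence for every $b\ge b_0$ the term $z^*(b)Jy(b)$ is zero regardless of how $z$ behaves at infinity, and the limit is $0$.

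The only point requiring care — and the closest thing to an obstacle — is the legitimacy of invoking Theorem \ref{Lagrange}, which is stated for $\cldone$ functions. I would note that every element of $D(H^b)$ (respectively $D(H)$) is nabla differentiable with the regularity needed for $\mathscr{L}$ to be defined via \eqref{shieq15}, so the pair $(z,y)$ meets the hypotheses of Theorem \ref{Lagrange} on each finite interval $[\rho(t_0),b]_\T$. With that observation in place the computation is routine and identical in spirit to the one just completed for $H_0$.
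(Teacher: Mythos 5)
Your proposal is correct and matches the paper's intent exactly: the paper gives no separate proof, stating only that the argument is ``similar to that just completed'' for Lemma \ref{shilemma2.6}, which is precisely the Lagrange-identity computation you carry out, with the key observation that the boundary conditions on the minimal-domain argument $y$ alone annihilate the term $z^*Jy\big|_{\rho(t_0)}^{b}$. Your filled-in details (including the use of the Hermitian property of $W$ to rewrite $(\Upsilon g)^*W\Upsilon y$ as $(\mathscr{L}z)^*\Upsilon y$, and the eventual vanishing of $y$ beyond some $b_0$ in the half-line case) are all sound.
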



\begin{lemma}\label{shilemma2.8}
If \eqref{shiA2} holds, then $\Ran(H^b_0)=\Ker(H^b)^\perp$.
\end{lemma}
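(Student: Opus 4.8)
The plan is to establish the set equality by proving the two inclusions $\Ran(H^b_0)\subseteq\Ker(H^b)^\perp$ and $\Ker(H^b)^\perp\subseteq\Ran(H^b_0)$ separately, working directly with the inner product $\langle\cdot,\cdot\rangle_b$ of \eqref{ipb} so that closedness of $\Ran(H^b_0)$ never has to be invoked. First I would record the structure of the kernel: $H^bz=0$ forces $W\Upsilon f=0$ for the associated $f$, hence $\mathscr{L}z=0$, so $\Ker(H^b)$ is exactly the image in $L^2_W\left([\rho(t_0),b]_\T\right)$ of the $2d$-dimensional homogeneous solution space $\{z:\mathscr{L}z=0\}$ (realized via \eqref{nabsym}). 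The inclusion $\Ran(H^b_0)\subseteq\Ker(H^b)^\perp$ is then immediate from Lemma \ref{shilemma2.7}: for $y\in D(H^b_0)$ and any $z\in\Ker(H^b)\subseteq D(H^b)$ we have $\langle H^b_0y,z\rangle_b=\langle y,H^bz\rangle_b=\langle y,0\rangle_b=0$, so each element of $\Ran(H^b_0)$ is orthogonal to $\Ker(H^b)$.

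The substantive direction $\Ker(H^b)^\perp\subseteq\Ran(H^b_0)$ amounts to a solvability (Fredholm-alternative) statement. Given $f\in\Ker(H^b)^\perp$, I would produce the unique solution $\tilde{y}$ of the inhomogeneous initial value problem $\mathscr{L}\tilde{y}=W\Upsilon f$ with $\tilde{y}^\rho(t_0)=0$. Rewriting $\mathscr{L}\tilde{y}=W\Upsilon f$ in the form $\tilde{y}^\nabla=\mathcal{S}(\cdot,0)\tilde{y}+\mathcal{G}(f)$—exactly as \eqref{shi2.1} was converted to \eqref{nabsym}, using the invertibility of $I_d-\nu A$ to solve for the nabla derivative—exhibits this as a $\nu$-regressive linear dynamic system whose left-dense continuous forcing $\mathcal{G}(f)$ is built from $W_1f_1$ and $W_2f_2^\rho$. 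Variation of parameters on the finite interval then yields $\tilde{y}\in\cldone\left([\rho(t_0),b]_\T,\C^{2d}\right)$, so $\tilde{y}\in D(H^b)$ with $H^b\tilde{y}=f$; it remains only to verify the boundary condition $\tilde{y}(b)=0$.

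To obtain $\tilde{y}(b)=0$ I would apply the Lagrange identity, Theorem \ref{Lagrange}, to the pair $(z,\tilde{y})$ for an arbitrary homogeneous solution $z$ (so $\mathscr{L}z=0$), which removes the $(\mathscr{L}z)^*\Upsilon\tilde{y}$ term and leaves
$$ \int_{\rho(t_0)}^b (\Upsilon z)^*\mathscr{L}\tilde{y}\,\nabla t = z^*(t)J\tilde{y}(t)\Big|_{\rho(t_0)}^b. $$
The left-hand side equals $\int_{\rho(t_0)}^b(\Upsilon z)^*W\Upsilon f\,\nabla t=\langle f,z\rangle_b=0$ since $f\perp\Ker(H^b)$, while the boundary contribution at $\rho(t_0)$ vanishes because $\tilde{y}^\rho(t_0)=0$; hence $z^*(b)J\tilde{y}(b)=0$ for every homogeneous solution $z$. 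Since $z(b)=\Phi(b,0)z^\rho(t_0)$ with $\Phi(b,0)$ invertible, $z(b)$ ranges over all of $\C^{2d}$, so $J\tilde{y}(b)=0$ and therefore $\tilde{y}(b)=0$. Consequently $\tilde{y}\in D(H^b_0)$ with $H^b_0\tilde{y}=f$, giving $f\in\Ran(H^b_0)$ and completing the inclusion.

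I expect the main obstacle to be the careful setup of the inhomogeneous solve rather than the Lagrange step: one must confirm that $\mathscr{L}\tilde{y}=W\Upsilon f$ is genuinely equivalent to the regressive system $\tilde{y}^\nabla=\mathcal{S}(\cdot,0)\tilde{y}+\mathcal{G}(f)$, so that a solution exists on all of $[\rho(t_0),b]_\T$, lies in $\cldone$, and actually reproduces $f$ as $H^b\tilde{y}$—in particular that the datum $W\Upsilon f$, and hence $\tilde{y}$, is independent of the $L^2_W$ representative chosen for $f$. Once that regularity and equivalence are in place, the Lagrange identity together with the surjectivity of $z\mapsto z(b)$ finishes the argument quickly.
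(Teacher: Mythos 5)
Your proposal is correct and follows essentially the same route as the paper: the easy inclusion via Lemma \ref{shilemma2.7}, and for the converse, solving the inhomogeneous initial value problem $\mathscr{L}y=W\Upsilon f$, $y^\rho(t_0)=0$, then applying the Lagrange identity (Theorem \ref{Lagrange}) against elements of $\Ker(H^b)$ to force $y(b)=0$. The only cosmetic difference is that the paper tests against the columns of a fundamental matrix normalized by $\Phi(b)=J$, so that $\Phi^*(b)Jy(b)=y(b)=0$ directly, whereas you test against arbitrary homogeneous solutions and invoke surjectivity of $z\mapsto z(b)$ together with invertibility of $J$ --- the same argument.
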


\begin{proof}
Given any $f\in\Ran(H_0^b)$, there exists $y\in D(H_0^b)$ such that $H_0^by=f$. For each $z\in\Ker(H^b)$, it follows from the previous lemma that $\langle f,z\rangle_b=\langle H_0^by,z\rangle_b=\langle y,H^b z\rangle_b=\langle y,0\rangle_b=0$, so that $f\in\Ker(H^b)^\perp$. Thus $\Ran(H_0^b)\subset\Ker(H^b)^\perp$. If $f\in\Ker(H^b)^\perp$, then $\langle f,z\rangle_b=0$ for all $z\in\Ker(H^b)$. Consider the following initial value problem:
$$ Jy^\nabla(t)=P(t)\Upsilon y(t)+W(t)\Upsilon f(t), \quad y^\rho(t_0)=0, \qquad t\in[\rho(t_0),b]_\T. $$
By \eqref{shiA2}, this problem has a unique solution $y$ on $[\rho(t_0),b]_\T$. Let $\Phi(t)=(\varphi_1,\varphi_2,\cdots,\varphi_{2d})(t)$ be the fundamental solution matrix of the homogeneous system
$$ Jx^\nabla(t)=P(t)\Upsilon x(t), \quad \Phi(b)=J, \qquad t\in[\rho(t_0),b]_\T. $$
Clearly $\varphi_k\in\Ker(H^b)$ for $1\le k\le 2d$, so by Theorem \ref{Lagrange} and \eqref{ipb},
\begin{eqnarray*}
 0 &=& \langle f,\varphi_k\rangle_b = \int_{\rho(t_0)}^{b} \big\{(\Upsilon \varphi_k)^*W\Upsilon f \big\}(t)\nabla t \\
   &=& \int_{\rho(t_0)}^{b} \big\{(\Upsilon \varphi_k)^*\mathscr{L}y \big\}(t)\nabla t \\
   &=& \int_{\rho(t_0)}^{b} \big\{(\Upsilon \varphi_k)^*\mathscr{L}y-(\mathscr{L}\varphi_k)^*\Upsilon f \big\}(t)\nabla t \\
   &=& \varphi^*_k(b)Jy(b)-\varphi^*_k(\rho(t_0))Jy(\rho(t_0)) \\
   &=& \varphi^*_k(b)Jy(b).
\end{eqnarray*}
Thus we have that $\Phi^*(b)Jy(b)=y(b)=0$, and $\Ker(H^b)^\perp\subset\Ran(H_0^b)$.
\end{proof}


\begin{theorem}
If \eqref{shiA2} holds, then $H_0$ is symmetric and $H$ is densely defined. 
\end{theorem}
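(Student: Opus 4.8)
The two assertions reduce to a single density statement. By Lemma \ref{shilemma2.6} the operator $H_0$ is already Hermitian, so by part (iii) of Definition \ref{shidef2.1} it is symmetric as soon as its domain $D(H_0)$ is dense in $L^2_W([\rho(t_0),\infty)_\T)$. Moreover $H_0\subseteq H$ gives $D(H_0)\subseteq D(H)$, so density of $D(H_0)$ forces $D(H)$ to be dense as well. Thus the whole theorem follows once we prove
$$ \overline{D(H_0)} = L^2_W([\rho(t_0),\infty)_\T). $$

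I would establish this by showing the orthogonal complement is trivial: let $z\in L^2_W([\rho(t_0),\infty)_\T)$ satisfy $\langle z,y\rangle=0$ for every $y\in D(H_0)$, and aim to conclude $\|z\|_W=0$. The plan is to manufacture, on each interior block $[c,d]_\T\subset(\rho(t_0),\infty)_\T$ and for a rich enough family of sources, functions $y$ that (a) lie in $D(H_0)$ and (b) have $W\Upsilon y$ ranging over a set dense in the $W$-relevant directions over $[c,d]_\T$. Since only $W^{1/2}\Upsilon z$ enters the inner product \eqref{ipinfty}, testing $\langle z,y\rangle=0$ against such $y$ will force $W\Upsilon z=0$ on $[c,d]_\T$; letting $c\downarrow\rho(t_0)$ and $d\uparrow\infty$ then yields $W\Upsilon z\equiv 0$, i.e. $\|z\|_W=0$, which is exactly $z=0$ in the quotient space $L^2_W$.

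The construction of these test functions is where the hypotheses do their work. Given a left-dense continuous source $h$ supported in $[c,d]_\T$, the invertibility of $I_{2d}-\nu\mathcal{S}$ from \eqref{imsstar} (equivalently \eqref{shiA2}) makes the initial value problem $Jy^\nabla=P\Upsilon y+W\Upsilon h$, $y^\rho(t_0)=0$, uniquely solvable, and the resulting $y$ automatically satisfies $\mathscr{L}y=W\Upsilon h$, so that on any finite interval containing $[c,d]_\T$ it lies in the corresponding maximal domain with image $f=h$. The obstacle is that this $y$ need not belong to $D(H_0)$: beyond the support of $h$ it solves the homogeneous system and so in general fails to vanish, whereas membership in $D(H_0)$ demands $y\equiv 0$ past a finite point. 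To repair this I would extend the source over an adjoining block $[d,d']_\T$ and choose it so as to steer the state from $y(d)$ to $y(d')=0$; once $y(d')=0$, extension by zero produces a genuine element of $D(H_0)$, with the extended source still in $L^2_W$.

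I expect this steering step to be the \emph{main obstacle}, as it is precisely a controllability statement for the pair $(\mathcal{S},W)$, and it is here that Atkinson's definiteness condition \eqref{shiA1} is indispensable: it guarantees that no nontrivial solution of the homogeneous system is invisible to $W$ over a block of positive length, so that the reachable set of the controlled system exhausts $\C^{2d}$ once $[d,d']_\T$ is taken past the point $t_1$ of \eqref{shiA1}. Granting this, the admissible $y$ realize $W\Upsilon y$ along a dense set of profiles on each interior block, the orthogonality relation collapses to $W\Upsilon z=0$, and the density of $D(H_0)$ follows. This argument parallels the finitely supported construction used by Shi \cite{shi2} in the discrete case, with the nabla calculus and the Sturmian identity \eqref{sturmfact} (through the inverse shift $\Upsilon^{-1}$ of \eqref{rightshift}) replacing the unit shifts there.
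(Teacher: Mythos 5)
Your opening reduction is correct and coincides with the paper's: by Lemma \ref{shilemma2.6} the operator $H_0$ is Hermitian, and since $H_0\subset H$, both assertions follow once $D(H_0)$ is shown to be dense. The density argument itself, however, contains genuine gaps. The steering step, which you yourself flag as the main obstacle, is never carried out, and, as stated, it is false in general: condition \eqref{shiA1} controls Gramians $\int_{t_0}^{t}(\Upsilon y)^*W\Upsilon y\,\nabla s$ only on intervals anchored at $t_0$ (positivity at $t_1$ and monotonicity say nothing about $W$ beyond $t_1$). For instance, \eqref{shiA1} is compatible with $W$ vanishing identically beyond some $T>t_1$, in which case the control enters as $W\Upsilon h\equiv 0$ on any window $[d,d']_\T$ with $d>T$, the reachable set there is $\{0\}$, and no steering to zero is possible from a nonzero state. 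Second, even where steering works, the localization does not follow as claimed: your admissible test function $y$ is supported on $[c,d']_\T$, so the relation $\langle z,y\rangle=0$ contains the term $\int_{[d,d']_\T}(\Upsilon y)^*W\Upsilon z\,\nabla t$ contributed by the steering leg, where $\Upsilon y$ is dictated by the construction rather than freely chosen; one cannot conclude $W\Upsilon z=0$ on $[c,d]_\T$ without disentangling that contribution. Third, membership in $D(H_0)$ forces $(\mathscr{L}y)(t)$ to lie pointwise in $\Ran W(t)$, a nontrivial constraint when $W$ is singular, so the asserted density of the profiles $W^{1/2}\Upsilon y$ is itself an unproved claim.

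For comparison, the paper proves density by a duality argument that sidesteps controllability entirely. Given $f\in D(H_0)^\perp$, it solves the initial value problem $Jy^\nabla=P\Upsilon y+W\Upsilon f$, $y^\rho(t_0)=0$, on $[\rho(t_0),b]_\T$; Theorem \ref{Lagrange}, together with the vanishing of every $z\in D(H^b_0)$ at both endpoints, gives $\langle y,H^b_0z\rangle_b=\langle f,z\rangle_b=0$, so $y\perp\Ran(H^b_0)$; Lemma \ref{shilemma2.8}, which states $\Ran(H^b_0)=\Ker(H^b)^\perp$, then places $y$ in $\Ker(H^b)$, whence $W\Upsilon f=\mathscr{L}y=0$ on $[\rho(t_0),b]_\T$, and letting $b\to\infty$ yields $f=0$. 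The terminal-value difficulty you wrestle with is resolved there inside Lemma \ref{shilemma2.8} by a Fredholm-alternative mechanism: orthogonality of the source to $\Ker(H^b)$ forces $y(b)=0$ via the fundamental matrix normalized by $\Phi(b)=J$, with no controllability of the pair $(\mathcal{S},W)$ ever invoked. To salvage your route you would need to formulate and prove a correct controllability lemma (with control windows anchored at $t_0$) and then repair the localization; the paper's argument is both shorter and avoids these issues.
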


\begin{proof}
By Lemma \ref{shilemma2.6} and the fact that $H_0\subset H$, it suffices to show that $D(H_0)$ is dense in $L^2_W\left([\rho(t_0),\infty)_\T\right)$, i.e., that $D(H_0)^\perp=\{0\}$. If $f\in D(H_0)^\perp$, then $D(H^b_0)\subset D(H_0)$ for all $t\in(\rho(t_0),\infty)_\T$ in the sense that all the
functions of $D(H^b_0)$ are considered to have been extended by zero on to $[\rho(t_0),\infty)_\T$. Then, for all $t\in(\rho(t_0),\infty)_\T$ and for all $z\in D(H^b_0)$, 	$\langle f,z\rangle_b=\langle f,z\rangle=0$. Set $H^b_0(z)(t)=g(t)$ for $t\in[\rho(t_0),b]_\T$, and let $y$ be any solution of the system
$$ Jy^\nabla(t) = P(t)\Upsilon y(t)+W(t)\Upsilon f(t), \quad t\in[\rho(t_0),b]_\T. $$
By Theorem \ref{Lagrange}, we have
\begin{eqnarray*}
 \langle y,g\rangle_b-\langle f,z\rangle_b 
 &=& \int_{\rho(t_0)}^{b} \left\{(\Upsilon g)^*W\Upsilon y-(\Upsilon z)^*W\Upsilon f\right\}(t)\nabla t \\
 &=& \int_{\rho(t_0)}^{b} \left\{(\mathscr{L}z)^*\Upsilon y-(\Upsilon z)^*\mathscr{L}y\right\}(t)\nabla t \\
 &=& -z^*(b)Jy(b)+z^*(\rho(t_0))Jy(\rho(t_0))=0.
\end{eqnarray*}
We then have that	$\langle y,g\rangle_b=\langle f,z\rangle_b=0$. It follows that $y\in\Ran(H^b_0)^\perp=\Ker(H^b)$ by Lemma
\ref{shilemma2.8}. Therefore, $H^by=0$, and thus $f\big|_{[\rho(t_0),b]_\T}=0$ in $L^2_W\left([\rho(t_0),b]_\T\right)$, that is $\|f\|_b=0$. Since $b>\rho(t_0)$ is arbitrary, it follows that $f=0$ in $L^2_W\left([\rho(t_0),\infty)_\T\right)$. Consequently, $D(H_0)^\perp=\{0\}$.
\end{proof}


\begin{theorem}
If \eqref{shiA2} holds, then $H_0^*=H$. 
\end{theorem}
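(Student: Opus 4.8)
The plan is to establish the two inclusions $H\subseteq H_0^*$ and $H_0^*\subseteq H$ separately; the first is immediate from the material already in hand, while the second carries the real content.

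For $H\subseteq H_0^*$, fix $z\in D(H)$. By the second relation of Lemma \ref{shilemma2.7} we have $\langle H_0y,z\rangle=\langle y,Hz\rangle$ for every $y\in D(H_0)$, and conjugating gives $\langle z,H_0y\rangle=\langle Hz,y\rangle$. Hence the functional $y\mapsto\langle z,H_0y\rangle$ is continuous on $D(H_0)$ (it is bounded by $\|Hz\|_W\|y\|_W$ via Cauchy--Schwarz), so $z\in D(H_0^*)$. Since $\langle H_0^*z,y\rangle=\langle z,H_0y\rangle=\langle Hz,y\rangle$ holds on the dense set $D(H_0)$, we conclude $H_0^*z=Hz$, and therefore $H\subseteq H_0^*$.

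For the reverse inclusion $H_0^*\subseteq H$, let $g\in D(H_0^*)$ and set $h:=H_0^*g$, so $\langle g,H_0y\rangle=\langle h,y\rangle$ for all $y\in D(H_0)$. Fix an arbitrary $b\in(\rho(t_0),\infty)_\T$. Using the $\nu$-regressivity guaranteed by \eqref{shiA2}, solve the initial value problem $J\tilde y^\nabla=P\Upsilon\tilde y+W\Upsilon h$, $\tilde y^\rho(t_0)=0$, on $[\rho(t_0),b]_\T$, obtaining $\tilde y$ with $\mathscr{L}\tilde y=W\Upsilon h$. Now test $g$ and $\tilde y$ against $y\in D(H_0^b)$, extended by zero to the half-line so that $y\in D(H_0)$ with $H_0y=H_0^by$. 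Applying Theorem \ref{Lagrange} to $y$ and $\tilde y$, the boundary term vanishes because $y^\rho(t_0)=y(b)=0$, which yields $\langle h,y\rangle_b=\langle\tilde y,H_0^by\rangle_b$. On the other hand, the adjoint relation gives $\langle h,y\rangle=\langle g,H_0y\rangle$, and since $y$ is supported on $[\rho(t_0),b]_\T$ both sides reduce to the $b$-inner product, so $\langle h,y\rangle_b=\langle g,H_0^by\rangle_b$. Subtracting, $\langle g-\tilde y,H_0^by\rangle_b=0$ for every $y\in D(H_0^b)$, i.e. $g-\tilde y\in\Ran(H_0^b)^\perp$. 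Invoking Lemma \ref{shilemma2.8}, namely $\Ran(H_0^b)=\Ker(H^b)^\perp$, and taking orthogonal complements in $L^2_W([\rho(t_0),b]_\T)$ gives $\Ran(H_0^b)^\perp=\Ker(H^b)$, so $g-\tilde y$ coincides with a homogeneous solution; hence $\mathscr{L}g=\mathscr{L}\tilde y=W\Upsilon h$ on $[\rho(t_0),b]_\T$. As $b$ is arbitrary this holds on all of $[\rho(t_0),\infty)_\T$, and since $g,h\in L^2_W([\rho(t_0),\infty)_\T)$ we obtain $g\in D(H)$ with $Hg=h=H_0^*g$, giving $H_0^*\subseteq H$ and thus $H_0^*=H$.

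The main obstacle is the second inclusion, and within it the step identifying $g-\tilde y$ as a genuine element of $\Ker(H^b)$. The delicate points are that, because $W$ may be singular, every identity must be read modulo $L^2_W$-equivalence rather than pointwise, so the passage from $g-\tilde y\perp\Ran(H_0^b)$ to $\mathscr{L}g=W\Upsilon h$ is a statement modulo $W$; and that the bookkeeping of extending $D(H_0^b)$-functions by zero, reconciling $\langle\cdot,\cdot\rangle$ with $\langle\cdot,\cdot\rangle_b$, and tracking the endpoint boundary term at $b$ in Theorem \ref{Lagrange} must be handled carefully. Lemma \ref{shilemma2.8} is exactly the device converting the orthogonality into membership in $\Ker(H^b)$, so the whole argument hinges on having that range characterization available uniformly in $b$.
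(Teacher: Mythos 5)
Your proof is correct and follows essentially the same route as the paper: the easy inclusion $H\subseteq H_0^*$ via Lemma \ref{shilemma2.7}, and the reverse inclusion by solving the auxiliary inhomogeneous system $J\tilde y^\nabla=P\Upsilon\tilde y+W\Upsilon h$, testing against compactly supported elements to get $g-\tilde y\in\Ran(H_0^b)^\perp$, and invoking Lemma \ref{shilemma2.8} to place $g-\tilde y$ in $\Ker(H^b)$ for arbitrary $b$. The only cosmetic differences are that you fix $b$ first and rederive the needed identity directly from Theorem \ref{Lagrange} where the paper cites Lemma \ref{shilemma2.7}.
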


\begin{proof}
It suffices to show $D(H_0^*)=D(H)$, and $H_0^*y=Hy$ for all $y\in D(H_0^*)$. If $y\in D(H)$, then $\langle y,H_0z\rangle=\langle Hy,z\rangle$ for all $z\in D(H_0)$ by Lemma \ref{shilemma2.7}. From this we see that the functional $\langle y,H_0(\cdot)\rangle$ is continuous on $D(H_0)$. Then $y\in D(H_0^*)$ by (iv) in Definition \ref{shidef2.1} and thus $D(H)\subset D(H_0^*)$. We now show $D(H_0^*)\subset D(H)$. If $y\in D(H_0^*)$, then $y$ and $g:=H_0^*y$ are both in $L^2_W\left([\rho(t_0),\infty)_\T\right)$. If $x$ is a solution of the system
\begin{equation}\label{shi2.28}
 Jx^\nabla(t) = P(t)\Upsilon x(t) + W(t)\Upsilon g(t), 
\end{equation}
then for each $z\in D(H_0)$ there exists $b\in[\rho(t_0),\infty)_\T$ such that $z^\rho(t_0)=z(t)=0$ for all $t\in[b,\infty)_\T$. Then
$z\big|_{[\rho(t_0),b]_\T}\in D(H^b_0)$. This implies from Lemma \ref{shilemma2.7} that 
$$ \langle g,z\rangle=\langle g,z\rangle_b=\langle H^bx,z\rangle_b=\langle x,H^b_0z\rangle_b=\langle x,H_0z\rangle. $$ 
It follows that 
$$ \langle y-x,H_0z \rangle = \langle y,H_0z \rangle - \langle x,H_0z \rangle = \langle H_0^*y,z\rangle -	\langle g,z\rangle=0. $$
Additionally, $\langle y-x,H^b_0 z\rangle_b=\langle y-x,H_0z\rangle=0$. Thus $(y-x)\big|_{[\rho(t_0),b]_\T}\in\Ran(H^b_0)^\perp$. By Lemma
\ref{shilemma2.8} we have that $(y-x)\big|_{[\rho(t_0),b]_\T}\in\Ker(H^b)$ whereby $H^b(y-x)=0$, in other words, $\mathscr{L}(y-x)(t)=0$ for $t\in[\rho(t_0),b]_\T$. This, together with \eqref{shi2.28}, implies that
$$ Jy^\nabla(t)-P(t)\Upsilon y(t)=Jx^\nabla(t)-P(t)\Upsilon x(t)=W(t)\Upsilon g(t), \quad t\in[\rho(t_0),b]_\T. $$
Since $b\ge\rho(t_0)$ may be chosen arbitrarily large and $y,g\in L^2_W\left([\rho(t_0),\infty)_\T\right)$, we have that $y\in D(H)$ and $Hy=g=H_0^*y$. As a result, $D(H_0^*)\subset D(H)$, whence $D(H_0^*)=D(H)$ and $H^*_0 y=Hy$ for all $y\in D(H_0^*)$.
\end{proof}


\section{Weyl disks and their limiting set}

In this section, we first construct matrix disks for system \eqref{maineq} over time-scale intervals of finite length. These matrix disks are called Weyl disks \cite{cg2}, which turn out to be nested and converge to a limiting set. This limiting set will play a major role in the discussions of square summable solutions of \eqref{maineq}. Again we will rely heavily on the organization of the topic as done by Shi \cite{shi2} in the discrete case.

Suppose that $\theta(t,\lambda)$ and $\phi(t,\lambda)$ are $2d\times d$ matrix-valued solutions of \eqref{maineq} satisfying $\theta(\rho(t_0),\lambda)=\alpha^*$ and $\phi(\rho(t_0),\lambda)= J\alpha^*$, respectively, where $\alpha$ satisfies \eqref{shi2.12}. Then we have from \eqref{shi2.12} that 
\begin{equation}\label{shi3.1}
 \alpha\theta(\rho(t_0),\lambda) = I_d, \quad \alpha\phi(\rho(t_0),\lambda) = 0. 
\end{equation}
Set
\begin{equation}\label{shi3.2}
 Y(t,\lambda):=(\theta,\phi)(t,\lambda),
\end{equation}
so that
\begin{equation}\label{shi3.3}
 Y(\rho(t_0),\lambda)=(\alpha^*,J\alpha^*)=:\Omega.
\end{equation}
Then we have from \eqref{shi2.12} that $\Omega$ is symplectic and unitary; in other words,
\begin{equation}\label{shi3.4}
 \Omega^* J\Omega = J \quad\text{and}\quad \Omega^*\Omega = I_{2d}. 
\end{equation}
Therefore, $Y(\cdot,\lambda)$ is a fundamental solution matrix of \eqref{maineq} and satisfies, by \eqref{shi2.6} and
from \eqref{shi3.4}, that
\begin{equation}\label{shi3.5}
 Y^*\left(t,\overline{\lambda}\right)JY(t,\lambda)=J, \quad t\in[\rho(t_0),\infty)_\T, 
\end{equation}
and thus
\begin{equation}\label{shi3.6}
 Y(t,\lambda)JY^*\left(t,\overline{\lambda}\right)=J,  \quad t\in[\rho(t_0),\infty)_\T.
\end{equation}


\begin{lemma}\label{shilemma3.1}
Let $\alpha$ and $\beta$ satisfy \eqref{shi2.12} and \eqref{shi2.13}, respectively. Then $\lambda$ is an eigenvalue of the problem \eqref{maineq} with boundary conditions \eqref{shi2.11} if and only if $\det(\beta\phi(b,\lambda))=0$. Additionally, $y(t,\lambda)$ is an eigenfunction with respect to $\lambda$ if and only if there exists $\xi\in\C^d$ such that $y(t,\lambda)=\phi(t,\lambda)\xi$, where $\xi\neq 0$ is a solution of the homogeneous linear algebraic system
\begin{equation}\label{shi3.7}
 \beta\phi(b,\lambda)\xi = 0.
\end{equation}
\end{lemma}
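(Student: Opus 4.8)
The plan is to parametrize all solutions of \eqref{maineq}$_\lambda$ that meet the left endpoint condition in \eqref{shi2.11} and then read off the eigenvalue condition from the right endpoint. Since $Y(\cdot,\lambda)=(\theta,\phi)(\cdot,\lambda)$ is a fundamental solution matrix of \eqref{maineq}$_\lambda$, every solution can be written uniquely as $y(t,\lambda)=\theta(t,\lambda)c_1+\phi(t,\lambda)c_2$ for constant vectors $c_1,c_2\in\C^d$. First I would impose $\alpha y^\rho(t_0)=0$: evaluating at $\rho(t_0)$ and using $\theta(\rho(t_0),\lambda)=\alpha^*$, $\phi(\rho(t_0),\lambda)=J\alpha^*$ together with the normalization relations $\alpha\alpha^*=I_d$ and $\alpha J\alpha^*=0$ from \eqref{shi2.12} (equivalently \eqref{shi3.1}), the left condition collapses to $\alpha y^\rho(t_0)=c_1=0$. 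Hence every admissible solution reduces to $y(t,\lambda)=\phi(t,\lambda)\xi$ with $\xi:=c_2\in\C^d$.

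Next I would impose the right endpoint condition $\beta y(b)=0$. With the above representation this becomes precisely $\beta\phi(b,\lambda)\xi=0$, the homogeneous system \eqref{shi3.7}. Because $\beta$ is $d\times 2d$ and $\phi(b,\lambda)$ is $2d\times d$, the coefficient matrix $\beta\phi(b,\lambda)$ is $d\times d$, so this system admits a nontrivial solution $\xi\neq 0$ if and only if $\det(\beta\phi(b,\lambda))=0$. Combining the two steps, $\lambda$ is an eigenvalue of \eqref{maineq}, \eqref{shi2.11} exactly when \eqref{shi3.7} has a nonzero solution, which is the stated determinant criterion; and the associated eigenfunctions are exactly the functions $\phi(\cdot,\lambda)\xi$ with $\xi\in\Ker(\beta\phi(b,\lambda))\setminus\{0\}$. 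To confirm that such a $\xi\neq 0$ really produces a nontrivial eigenfunction, I would observe that $\phi(\rho(t_0),\lambda)=J\alpha^*$ has full column rank $d$ (since $J$ is invertible and $\rank\alpha=d$), so $\phi(t,\lambda)\xi$ cannot vanish identically.

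This argument is essentially a bookkeeping exercise in linear algebra, and I expect no serious obstacle. The one point deserving care is the second step, where the left boundary condition must be shown to collapse to $c_1=0$; this rests precisely on the two identities $\alpha\alpha^*=I_d$ and $\alpha J\alpha^*=0$, and it is exactly here that the self-adjoint structure of the boundary data in \eqref{shi2.12} enters.
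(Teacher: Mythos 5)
Your proof is correct. The paper gives no details for this lemma---it merely states that the proof is unchanged from the discrete case and cites Shi and Clark--Gesztesy---and your argument (writing every solution as $y=\theta c_1+\phi c_2$ via the fundamental matrix $Y=(\theta,\phi)$, using $\alpha\alpha^*=I_d$ and $\alpha J\alpha^*=0$ to collapse the left boundary condition to $c_1=0$, then reading the right condition as the $d\times d$ system \eqref{shi3.7} with the full-column-rank observation guaranteeing nontriviality) is precisely the standard argument those references supply, so it matches the intended proof.
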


\begin{proof}
The proof is unchanged from the discrete case, see \cite[Lemma 3.1]{shi2} and \cite[Lemma 2.8]{cg2}.
\end{proof}

In the subsequent development we will be interested in the function $\chi$ given via
\begin{equation}\label{shi3.8}
 \chi(t,\lambda,b):=Y(t,\lambda)\begin{pmatrix} I_d \\ M(\lambda,b) \end{pmatrix},
\end{equation}
where $M(\lambda,b)$ is a $d\times d$ matrix such that $\beta\chi(b,\lambda,b)=0$, in other words
\begin{equation}\label{shi3.9}
 \beta\theta(b,\lambda)+\beta\phi(b,\lambda)M(\lambda,b)=0.
\end{equation}
Using Lemma \ref{shilemma3.1}, if $\lambda\in\C$ is not an eigenvalue of the problem \eqref{maineq}, \eqref{shi2.11}, then $\beta\phi(b,\lambda)$ is invertible, and from \eqref{shi3.9} we have that
\begin{equation}\label{shi3.10}
 M(\lambda,b) = -\big(\beta\phi(b,\lambda)\big)^{-1}\beta\theta(b,\lambda).
\end{equation}


\begin{lemma}\label{shilemma3.2}
Let $\alpha$ satisfy \eqref{shi2.12}. Then for each $b\in[t_1,\infty)_\T$ $($where $t_1$ is specified in \eqref{shiA1}$)$, we have the following.
\begin{enumerate}
 \item $M(\lambda,b)$ is analytic on the upper and lower half planes and at all non-eigenvalues of the problem \eqref{maineq}, \eqref{shi2.11} on the real axis;
 \item for all $\lambda\in\C$ with $\im\lambda\neq 0$, 
 \begin{equation}\label{shi3.11} M^*\left(\overline{\lambda},b\right) = M(\lambda,b) \end{equation}
 and
 \begin{equation}\label{shi3.12} 
   \im M(\lambda,b):=\frac{M(\lambda,b)-M^*(\lambda,b)}{2i}\lessgtr 0 \quad\text{for}\quad \im \lambda \lessgtr 0. 
 \end{equation}
\end{enumerate}
\end{lemma}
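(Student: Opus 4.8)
The plan is to read off part (i) directly from the explicit formula \eqref{shi3.10}, and to extract both assertions of part (ii) by evaluating a single Lagrange bracket of $\chi$ at the two endpoints $t=\rho(t_0)$ and $t=b$, feeding the results into the conservation laws \eqref{shi2.6} and Lemma \ref{lemma2.3}. The whole argument hinges on one algebraic fact about $\beta$, which I isolate first.

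For part (i), note that since the coefficient matrix $\mathcal{S}(\cdot,\lambda)$ in \eqref{nabsym} depends entirely (in fact linearly) on $\lambda$ while the initial data $\theta(\rho(t_0),\lambda)=\alpha^*$, $\phi(\rho(t_0),\lambda)=J\alpha^*$ are $\lambda$-independent, the endpoint values $\beta\theta(b,\lambda)$ and $\beta\phi(b,\lambda)$ are entire in $\lambda$, exactly as already observed for the fundamental matrix. By Lemma \ref{shilemma3.1}, $\det\big(\beta\phi(b,\lambda)\big)=0$ precisely at the eigenvalues of \eqref{maineq}, \eqref{shi2.11}, and these are real; hence $\beta\phi(b,\lambda)$ is invertible throughout the open upper and lower half planes and at every non-eigenvalue on the real axis. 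On these sets $\big(\beta\phi(b,\lambda)\big)^{-1}$ is analytic, so \eqref{shi3.10} exhibits $M(\lambda,b)$ as a product of analytic functions, giving (i).

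The algebraic crux for part (ii) is that $\Ker\beta$ is Lagrangian with respect to $J$. Indeed, by \eqref{shi2.13} the columns of $J\beta^*$ span a $d$-dimensional subspace lying inside $\Ker\beta$ (because $\beta J\beta^*=0$), and since $\rank\beta=d$ forces $\dim\Ker\beta=d$, we get $\Ker\beta=\Ran(J\beta^*)$; writing $u=J\beta^*a$, $v=J\beta^*c$ and using $J^*J=I_{2d}$ together with $\beta J\beta^*=0$ yields $u^*Jv=a^*(\beta J\beta^*)c=0$. Because $\beta\chi(b,\lambda,b)=0$ and $\beta\chi(b,\overline{\lambda},b)=0$ by \eqref{shi3.9}, both endpoint vectors lie in $\Ker\beta$, so the bracket of $\chi$ at $t=b$ vanishes. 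To get \eqref{shi3.11} I then apply the constancy \eqref{shi2.6} to $\chi(\cdot,\lambda,b)$ (a solution of \eqref{maineq}$_\lambda$) and $\chi(\cdot,\overline{\lambda},b)$ (a solution of \eqref{maineq}$_{\overline{\lambda}}$): the quantity $\chi^*(t,\overline{\lambda},b)J\chi(t,\lambda,b)$ is $t$-independent, so its value $0$ at $t=b$ equals its value at $t=\rho(t_0)$. Using $\chi(\rho(t_0),\lambda,b)=\Omega\left(\begin{smallmatrix} I_d \\ M(\lambda,b)\end{smallmatrix}\right)$ from \eqref{shi3.8}, \eqref{shi3.3} and the symplectic identity $\Omega^*J\Omega=J$ from \eqref{shi3.4}, a direct block computation gives $M^*(\overline{\lambda},b)-M(\lambda,b)=0$, which is \eqref{shi3.11}.

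For the inequality \eqref{shi3.12} I would instead apply Lemma \ref{lemma2.3} with $y=z=\chi(\cdot,\lambda,b)$ and $\eta=\lambda$, so the scalar factor becomes $\lambda-\overline{\lambda}=2i\,\im\lambda$. The boundary term at $t=b$ again vanishes by the Lagrangian property, while the same block computation evaluates the bracket at $t=\rho(t_0)$ to $M^*(\lambda,b)-M(\lambda,b)=-2i\,\im M(\lambda,b)$; hence $\im M(\lambda,b)=(\im\lambda)\,K(\lambda,b)$, where $K(\lambda,b):=\int_{\rho(t_0)}^{b}(\Upsilon\chi)^*W\,\Upsilon\chi\,\nabla s$. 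Since $\Omega$ is invertible, each column $\chi(\cdot,\lambda,b)c$ with $c\neq0$ is a nontrivial solution of \eqref{maineq}$_\lambda$, so the definiteness clause of \eqref{shiA1} (valid because $b\ge t_1$) together with $W\ge0$ forces $c^*K(\lambda,b)c>0$, i.e.\ $K(\lambda,b)>0$; the sign of $\im M(\lambda,b)$ then matches that of $\im\lambda$, giving \eqref{shi3.12}. The main obstacle is precisely the Lagrangian computation that annihilates the bracket at $t=b$; after that, everything reduces to substituting into the established conservation laws, and the one remaining subtlety is to verify that $K(\lambda,b)$ is strictly positive definite rather than merely nonnegative, which is exactly where $b\ge t_1$ and the positivity in \eqref{shiA1} are used.
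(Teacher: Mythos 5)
Your proof is correct. A point of comparison: the paper gives no argument at all for this lemma---it simply defers to the discrete case (Shi \cite[Lemma 3.2]{shi2} and Clark--Gesztesy \cite[Lemma 2.14]{cg2})---so what you have produced is a self-contained time-scale proof of what the paper outsources. Your route is the classical one that those references also follow (analyticity read off from \eqref{shi3.10} plus reality of the eigenvalues; the symmetry \eqref{shi3.11} and Herglotz property \eqref{shi3.12} extracted from the conservation laws \eqref{shi2.6} and Lemma \ref{lemma2.3} evaluated at the two endpoints), but you have correctly rebuilt it on the paper's own nabla-calculus machinery. Two features of your write-up are worth highlighting. First, your observation that $\Ker\beta=\Ran(J\beta^*)$ is a Lagrangian subspace, which kills the bracket $\chi^*(b,\cdot,b)J\chi(b,\cdot,b)$ at the right endpoint, is in effect an independent proof of the forward direction of Lemma \ref{shilemma3.3}, which the paper also only cites; so your argument makes that lemma's role explicit rather than assumed. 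Second, you correctly handle the one genuinely time-scale-specific subtlety: the definiteness hypothesis \eqref{shiA1} is stated with lower limit $t_0$ while the bracket identity produces an integral from $\rho(t_0)$, and your appeal to $W\ge 0$ to pass from nonnegativity on $[\rho(t_0),t_0]_\T$ to strict positivity of $K(\lambda,b)$ for $b\ge t_1$ is exactly what is needed to get the strict inequalities in \eqref{shi3.12} rather than $\le$ and $\ge$. The identity $\im M(\lambda,b)=(\im\lambda)K(\lambda,b)$ you derive is also slightly stronger than the stated sign condition and is the form actually used later in the Weyl-disk analysis (compare \eqref{shi3.31}--\eqref{shi3.32}), so nothing in your argument needs repair.
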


\begin{proof}
The proof is unchanged from the discrete case, see \cite[Lemma 3.2]{shi2} and \cite[Lemma 2.14]{cg2}.
\end{proof}


\begin{lemma}\label{shilemma3.3}
Let $\im\lambda\neq 0$. If $\beta$ satisfies \eqref{shi2.13} and $\chi(t,\lambda,b)$ satisfies
\begin{equation}\label{shi3.13}
 \beta\chi(b,\lambda,b)=0,
\end{equation}
then
\begin{equation}\label{shi3.14}
 \chi^*(b,\lambda,b)J\chi(b,\lambda,b)=0.
\end{equation}
Conversely, if $\chi(t,\lambda,b)$ satisfies \eqref{shi3.14} for some $d\times d$ matrix $M$, then there exists a $d\times 2d$ matrix $\beta$ satisfying \eqref{shi2.13} such that \eqref{shi3.13} holds.
\end{lemma}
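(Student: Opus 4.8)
The plan is to reduce both implications to a single linear-algebraic fact: the self-adjoint conditions \eqref{shi2.13} on $\beta$ say precisely that $\Ker\beta$ is a Lagrangian (maximal $J$-isotropic) $d$-dimensional subspace of $\C^{2d}$, and conversely every such subspace arises as $\Ker\beta$ for some $\beta$ obeying \eqref{shi2.13}. The only structural properties of $J$ I will use are $J^*=-J$ and $J^2=-I_{2d}$, from which follow $J^*J=I_{2d}$ and $J^*JJ=J$. The hypothesis $\im\lambda\neq0$ enters only to guarantee that $\chi$ and $Y$ are the objects under discussion; the content is purely algebraic.

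For the forward implication, I would first observe that $\Ker\beta=\Ran(J\beta^*)$: indeed $\beta(J\beta^*a)=(\beta J\beta^*)a=0$ by \eqref{shi2.13}, so $\Ran(J\beta^*)\subseteq\Ker\beta$, and since $\rank\beta=d$ and $J$ is invertible both spaces have dimension $d$. This subspace is $J$-isotropic, because for $u=J\beta^*a$ and $w=J\beta^*c$ one computes $u^*Jw=a^*\beta(J^*JJ)\beta^*c=a^*(\beta J\beta^*)c=0$. The hypothesis $\beta\chi(b,\lambda,b)=0$ now says every column of $\chi(b,\lambda,b)$ lies in $\Ker\beta=\Ran(J\beta^*)$, so writing $\chi(b,\lambda,b)=J\beta^*G$ for a suitable $d\times d$ matrix $G$ gives $\chi^*(b,\lambda,b)J\chi(b,\lambda,b)=G^*(\beta J\beta^*)G=0$, which is \eqref{shi3.14}.

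For the converse I would start from the observation that $\chi(b,\lambda,b)=Y(b,\lambda)\left(\begin{smallmatrix}I_d\\M\end{smallmatrix}\right)$ has full column rank $d$, since $Y(b,\lambda)$ is invertible and $\left(\begin{smallmatrix}I_d\\M\end{smallmatrix}\right)$ has rank $d$. Hence $V:=\Ran\chi(b,\lambda,b)$ is $d$-dimensional, and \eqref{shi3.14} says exactly that $V$ is $J$-isotropic, hence Lagrangian. The key step is then to manufacture $\beta$: I would take $\beta$ to be the $d\times 2d$ matrix whose rows form an orthonormal basis of the Hermitian orthogonal complement $V^\perp$. This immediately forces $\beta\beta^*=I_d$, $\rank\beta=d$, and $\Ker\beta=(V^\perp)^\perp=V$, so that $\beta\chi(b,\lambda,b)=0$ holds. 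It remains only to verify $\beta J\beta^*=0$, i.e. that $V^\perp$ is itself $J$-isotropic.

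That last verification is the only genuinely substantive point, and I would handle it by showing $V^\perp=JV$. For $v,w\in V$ we have $(Jv)^*w=v^*J^*w=-v^*Jw=0$ since $V$ is isotropic, so $JV\subseteq V^\perp$; as $J$ is invertible both spaces have dimension $d$, giving $V^\perp=JV$. Finally, for $Ju,Jw\in JV$ one computes $(Ju)^*J(Jw)=u^*(J^*JJ)w=u^*Jw=0$, so $V^\perp=JV$ is $J$-isotropic and $\beta J\beta^*=0$ follows. I expect this identification $V^\perp=JV$, together with the identity $J^*JJ=J$, to be the crux of the argument, since it is what lets the Lagrangian property of $\Ran\chi(b,\lambda,b)$ be transferred to its orthocomplement and thereby encoded in the boundary matrix $\beta$.
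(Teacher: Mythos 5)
Your proof is correct. Note, however, that the paper itself does not prove this lemma at all: its ``proof'' is the single line ``See [Lemma 2.13] of Clark and Gesztesy \cite{cg2},'' so there is no internal argument to compare against; what you have supplied is a self-contained replacement for that citation. Both directions of your argument check out: the identities $J^*J=I_{2d}$ and $J^*JJ=J$ are right, the identification $\Ker\beta=\Ran(J\beta^*)$ by inclusion plus dimension count is valid, and in the converse the full column rank of $\chi(b,\lambda,b)=Y(b,\lambda)\left(\begin{smallmatrix}I_d\\ M\end{smallmatrix}\right)$, the isotropy of $V=\Ran\chi(b,\lambda,b)$, and the key step $V^{\perp}=JV$ (inclusion from isotropy, equality from $\dim JV=\dim V^{\perp}=d$) are all correctly established, after which $\beta J\beta^*=0$ follows. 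It is worth knowing that the proof in the cited literature proceeds by an explicit formula rather than by subspace geometry: one sets $\beta:=(\chi^*\chi)^{-1/2}\chi^*J$ (with $\chi=\chi(b,\lambda,b)$), and then $\beta\chi=(\chi^*\chi)^{-1/2}\chi^*J\chi=0$, $\beta\beta^*=(\chi^*\chi)^{-1/2}\chi^*JJ^*\chi(\chi^*\chi)^{-1/2}=I_d$, and $\beta J\beta^*=(\chi^*\chi)^{-1/2}\chi^*(JJJ^*)\chi(\chi^*\chi)^{-1/2}=(\chi^*\chi)^{-1/2}\chi^*J\chi(\chi^*\chi)^{-1/2}=0$ all follow in one line each. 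Your construction is the coordinate-free version of exactly this: the conjugate rows of that explicit $\beta$ span $JV=V^{\perp}$, so the two arguments are equivalent, with yours making the Lagrangian-subspace structure transparent and the explicit formula being shorter to verify.
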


\begin{proof}
See \cite[Lemma 2.13]{cg2}.
\end{proof}

Set
$$ C(M,b):=\mp i(I_d,M^*)Y^*(b,\lambda)JY(b,\lambda)\left(\begin{smallmatrix} I_d \\ M \end{smallmatrix}\right), $$
where ``$+$'' holds if $\im \lambda<0$ and ``$-$'' holds when $\im\lambda>0$. Using the definition of $\chi$, it follows that
$$ C\big(M(\lambda,b),b\big)=\mp  i\chi^*(b,\lambda,b)J\chi(b,\lambda,b). $$
Consequently we have from Lemma \ref{shilemma3.3} that $M$ satisfies the matrix equation
\begin{equation}\label{shi3.15}
 C(M,b)=0
\end{equation}
if and only if there exists a $d\times 2d$ matrix $\beta$ satisfying \eqref{shi2.13} such that \eqref{shi3.13} holds, ergo \eqref{shi3.10} holds. Let
\begin{equation}\label{shi3.16}
 F(b,\lambda):=\mp iY^*(b,\lambda)JY(b,\lambda).
\end{equation}
Then $F(b,\lambda)$ is a $2d\times 2d$ Hermitian matrix such that
\begin{equation}\label{shi3.17}
 C(M,b)=(I_d,M^*)F(b,\lambda)\left(\begin{smallmatrix} I_d \\ M \end{smallmatrix}\right).
\end{equation}
For later use we block $F(b,\lambda)$ as in 
\begin{equation}\label{shi3.18}
 F(b,\lambda):=\begin{pmatrix} F_{11} & F_{12} \\ F^*_{12} & F_{22} \end{pmatrix}(b,\lambda),
\end{equation}
where $F_{mn}(b,\lambda)$ are $d\times d$ matrices for $m,n=1,2$. Then \eqref{shi3.15} can be recast in the form
\begin{equation}\label{shi3.19}
 M^*F_{22}(b,\lambda)M+F_{12}(b,\lambda)M+M^*F^*_{12}(b,\lambda)+F_{11}(b,\lambda)=0.
\end{equation}
Using Lemma \ref{lemma2.3}, \eqref{shi3.3}, and \eqref{shi3.4}, we see that
$$ Y^*(b,\lambda)JY(b,\lambda) = J + 2i \im \lambda \int_{\rho(t_0)}^{b}(\Upsilon Y)^*(t,\lambda)W(t)\Upsilon Y(t,\lambda) \nabla t, $$
which in tandem with \eqref{shi3.16} yields
\begin{equation}\label{shi3.20}
 F(b,\lambda)=\begin{cases} -iJ+2\im \lambda \displaystyle \int_{\rho(t_0)}^{b}(\Upsilon Y)^*(t,\lambda)W(t)\Upsilon Y(t,\lambda) \nabla t :& \im\lambda > 0, \\
 iJ-2\im\lambda \displaystyle \int_{\rho(t_0)}^{b}(\Upsilon Y)^*(t,\lambda)W(t)\Upsilon Y(t,\lambda) \nabla t :& \im\lambda < 0. \end{cases}
\end{equation}


\begin{theorem}\label{shithm3.1}
The matrix sets $C(M,b)\le 0$ are closed, convex, and nested in the sense that for fixed $M$ and fixed $\lambda\in\C$, $C^\nabla(M,t)\ge 0$, where the nabla derivative is with respect to $t$.
\end{theorem}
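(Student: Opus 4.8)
The plan is to treat the three assertions separately, viewing $C(M,b)=(I_d,M^*)F(b,\lambda)\left(\begin{smallmatrix} I_d \\ M \end{smallmatrix}\right)$ from \eqref{shi3.17} as a Hermitian quadratic matrix function of $M$, with $F(b,\lambda)$ given explicitly by \eqref{shi3.20}. Closedness is immediate: the map $M\mapsto C(M,b)$ is continuous (entrywise a polynomial in the entries of $M$ and $M^*$), and the cone of negative-semidefinite Hermitian matrices is closed, so $\{M:C(M,b)\le 0\}$ is the preimage of a closed set under a continuous map.

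For convexity I would first establish that the lower-right block $F_{22}(b,\lambda)$ from \eqref{shi3.18} is positive definite whenever $\im\lambda\neq 0$ and $b\ge t_1$. Since the lower-right block of $J$ vanishes, reading off \eqref{shi3.20} gives $F_{22}(b,\lambda)=2|\im\lambda|\int_{\rho(t_0)}^{b}(\Upsilon\phi)^*W\Upsilon\phi\,\nabla t$, where $\phi$ is the second block column of $Y$; the definiteness condition \eqref{shiA1} forces this to be strictly positive, because each column $\phi\xi$ with $\xi\neq 0$ is a nontrivial solution (as $\phi(\rho(t_0))=J\alpha^*$ has full column rank). With $F_{22}>0$ in hand, I would complete the square in \eqref{shi3.19} to recast $C(M,b)\le 0$ as the matrix-disk inequality $(M-M_c)^*F_{22}(M-M_c)\le R$, where $M_c=-F_{22}^{-1}F_{12}^*$ and $R=F_{12}F_{22}^{-1}F_{12}^*-F_{11}$. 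Convexity then reduces to an elementary estimate: for two points of the set, setting $a=F_{22}^{1/2}(M_1-M_c)$ and $b=F_{22}^{1/2}(M_2-M_c)$, one has $a^*a\le R$, $b^*b\le R$, and $a^*b+b^*a\le a^*a+b^*b$ (from $(a-b)^*(a-b)\ge 0$); expanding $\big(ta+(1-t)b\big)^*\big(ta+(1-t)b\big)$ and applying these bounds yields $\le tR+(1-t)R=R$, so $tM_1+(1-t)M_2$ again lies in the set for every $t\in[0,1]$.

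For nesting I would differentiate in $t$. Since $M$ is held fixed, \eqref{shi3.17} gives $C^\nabla(M,t)=(I_d,M^*)F^\nabla(t,\lambda)\left(\begin{smallmatrix} I_d \\ M \end{smallmatrix}\right)$, and applying the nabla fundamental theorem of calculus to \eqref{shi3.20} produces $F^\nabla(t,\lambda)=2|\im\lambda|(\Upsilon Y)^*(t,\lambda)W(t)\Upsilon Y(t,\lambda)$ regardless of the sign of $\im\lambda$ (the $\mp iJ$ term is constant and drops out). Because $W\ge 0$, this matrix is positive semidefinite, hence $C^\nabla(M,t)\ge 0$; consequently $C(M,\cdot)$ is nondecreasing in $t$, so that $C(M,t_1)\le C(M,t_2)\le 0$ whenever $t_1\le t_2$, and the disks $\{M:C(M,t)\le 0\}$ shrink as $t$ increases, which is the asserted nesting.

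The main obstacle I anticipate is the convexity step, specifically the positive-definiteness of $F_{22}$: this is the only place where the Atkinson-type definiteness hypothesis \eqref{shiA1} together with the restriction $b\ge t_1$ is genuinely needed, and without $F_{22}>0$ the completion of the square --- and with it the entire matrix-disk description --- collapses. In particular one cannot sidestep this by a factorization of $F$ itself, since the indefinite term $\mp iJ$ in \eqref{shi3.20} makes $F$ neither positive nor negative semidefinite.
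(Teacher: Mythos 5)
Your proof is correct and follows essentially the same route as the paper: the paper's own proof of this theorem simply cites \eqref{shi3.17} and \eqref{shi3.20} (deferring the details to the discussion in Clark and Gesztesy), and your argument fills in precisely those details --- closedness from continuity, convexity from the quadratic structure, and nesting from differentiating the integral representation with $W\ge 0$. The two auxiliary facts you establish along the way, namely the positive definiteness of $F_{22}(b,\lambda)$ for $b\ge t_1$ and the completed-square form of the disk inequality, are exactly what the paper records immediately afterward as Theorem~\ref{shiprop3.1} and equation \eqref{shi3.22}, so nothing in your development departs from the paper's intended argument.
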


\begin{proof}
The proof follows from \eqref{shi3.17} and \eqref{shi3.20}. See also the discussion in \cite[Remark 2.16]{cg2}.
\end{proof}

As noted in the discrete case \cite[Section 3]{shi2}, which actually refers to \cite[Remark 2.16]{cg2}, the intersection of the matrix sets $C(M,b)\le 0$ is a limiting set that is nonempty, closed, and convex. In what follows here we present a detailed analysis of the properties of $F(b,\lambda)$, which will play a vital role in the next section as we obtain precise relationships among the rank of the matrix radius of the limiting set, asymptotic behavior of eigenvalues of the Weyl disks, and the number of linearly independent square summable solutions of system \eqref{maineq}. Proceeding with this in mind, from \eqref{shi3.16}, \eqref{shi3.18}, and \eqref{shi3.20} we see that
\begin{eqnarray}
 F_{11}(b,\lambda) &=& \mp i\theta^*(b,\lambda)J\theta(b,\lambda) \nonumber \\
 &=& \pm 2\im \lambda\int_{\rho(t_0)}^{b}(\Upsilon \theta)^*(t,\lambda)W(t)\Upsilon \theta(t,\lambda)\nabla t, \nonumber \\
 F_{22}(b,\lambda) &=& \mp i\phi^*(b,\lambda)J\phi(b,\lambda) \label{shi3.21} \\
 &=& \pm 2\im \lambda\int_{\rho(t_0)}^{b}(\Upsilon \phi)^*(t,\lambda)W(t)\Upsilon \phi(t,\lambda)\nabla t, \nonumber \\
 F_{12}(b,\lambda) &=& \mp i\theta^*(b,\lambda)J\phi(b,\lambda) \nonumber \\
 &=& \pm iI_d \pm 2\im \lambda \int_{\rho(t_0)}^{b}(\Upsilon \theta)^*(t,\lambda)W(t)\Upsilon \phi(t,\lambda)\nabla t. \nonumber 
\end{eqnarray}

Assuming \eqref{shiA1}, we can obtain the following from \eqref{shi3.21}.

\begin{theorem}\label{shiprop3.1}
For any $\lambda\in\C$ with $\im\lambda\neq 0$, $F_{11}(b,\lambda)>0$ and $F_{22}(b,\lambda)>0$ for all $b\ge t_1$. Additionally, $F_{11}(b,\lambda)$ and $F_{22}(b,\lambda)$ are non-decreasing with respect to $b$.
\end{theorem}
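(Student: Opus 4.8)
The plan is to read everything off the explicit formulas in \eqref{shi3.21}, which already express $F_{11}$ and $F_{22}$ as integrals against the weight $W$. The positivity of these two blocks is exactly where the definiteness condition \eqref{shiA1} does its work, and the monotonicity in $b$ is then immediate from the structure of the integrals.

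First I would fix $\lambda\in\C$ with $\im\lambda\neq 0$ and treat the two cases $\im\lambda>0$ and $\im\lambda<0$ together by tracking the $\pm$ signs from \eqref{shi3.20}; in either case the formulas \eqref{shi3.21} give
\begin{equation*}
 F_{11}(b,\lambda) = |\im\lambda|\cdot 2\int_{\rho(t_0)}^{b}(\Upsilon\theta)^*(t,\lambda)W(t)\Upsilon\theta(t,\lambda)\,\nabla t,
\end{equation*}
and the analogous expression for $F_{22}$ with $\phi$ in place of $\theta$, after absorbing the sign of $\im\lambda$ into $|\im\lambda|$. The integrand $(\Upsilon\theta)^*W\Upsilon\theta$ is a $d\times d$ Hermitian matrix that is $\ge 0$ pointwise because $W(t)\ge 0$ by \eqref{shiA1}; the same holds with $\phi$. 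Hence each integral is a nonnegative Hermitian matrix, and multiplying by the positive scalar $2|\im\lambda|$ keeps it nonnegative, giving $F_{11}(b,\lambda)\ge 0$ and $F_{22}(b,\lambda)\ge 0$.

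Upgrading $\ge 0$ to $>0$ is the only substantive step, and it is where I would invoke the definiteness hypothesis directly. For a nonzero vector $\xi\in\C^d$, the columns of $\theta$ (respectively $\phi$) span solution spaces of \eqref{maineq}, so $\theta(\cdot,\lambda)\xi$ and $\phi(\cdot,\lambda)\xi$ are solutions of \eqref{maineq}$_\lambda$, and they are nontrivial since $\Omega$ in \eqref{shi3.3} is invertible (being unitary by \eqref{shi3.4}), which forces $\theta(\rho(t_0),\lambda)\xi=\alpha^*\xi\neq 0$ and $\phi(\rho(t_0),\lambda)\xi=J\alpha^*\xi\neq 0$ for $\xi\neq 0$. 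Then the second line of \eqref{shiA1} guarantees that for every $b\ge t_1$,
\begin{equation*}
 \xi^*F_{11}(b,\lambda)\xi = 2|\im\lambda|\int_{\rho(t_0)}^{b}(\Upsilon\theta\xi)^*(t)W(t)(\Upsilon\theta\xi)(t)\,\nabla t > 0,
\end{equation*}
and likewise for $F_{22}$ with $\phi\xi$; since $\xi\neq 0$ was arbitrary, both blocks are strictly positive definite for $b\ge t_1$.

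Finally, the monotonicity in $b$ follows because, for $b_2\ge b_1\ge t_1$, the difference $F_{11}(b_2,\lambda)-F_{11}(b_1,\lambda)$ equals $2|\im\lambda|\int_{b_1}^{b_2}(\Upsilon\theta)^*W\Upsilon\theta\,\nabla t$, a nonnegative Hermitian matrix by the same pointwise bound $W\ge 0$; the identical argument handles $F_{22}$. I do not expect a genuine obstacle here, since \eqref{shi3.21} has already done the hard analytic work; the one point requiring care is matching the $\mp$/$\pm$ sign conventions in \eqref{shi3.20} so that the final constant $2|\im\lambda|$ comes out positive in both half-planes, and verifying that $\Upsilon\theta\xi$ and $\Upsilon\phi\xi$ are the partial-left-shifts of genuine nontrivial solutions so that \eqref{shiA1} applies.
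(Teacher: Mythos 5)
Your proof is correct and follows exactly the route the paper intends: the paper gives no written proof beyond the remark that the theorem follows from \eqref{shi3.21} under \eqref{shiA1}, and your argument---reading $F_{11}(b,\lambda)$ and $F_{22}(b,\lambda)$ as $2|\im\lambda|$ times the integrals of $(\Upsilon\theta)^*W\Upsilon\theta$ and $(\Upsilon\phi)^*W\Upsilon\phi$, using the injectivity of $\alpha^*$ and $J\alpha^*$ (from \eqref{shi2.12}) to see that $\theta(\cdot,\lambda)\xi$ and $\phi(\cdot,\lambda)\xi$ are nontrivial solutions, and then invoking the definiteness condition \eqref{shiA1} for strict positivity when $b\ge t_1$---supplies precisely the details the paper leaves implicit. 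The sign bookkeeping across the two half-planes and the monotonicity step via $W\ge 0$ are both handled correctly.
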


Using Theorem \ref{shiprop3.1}, equation \eqref{shi3.19} can be stated as
\begin{equation}\label{shi3.22}
 \left(M+F_{22}^{-1}(b,\lambda)F_{12}^*(b,\lambda)\right)^* F_{22}(b,\lambda)\left(M+F_{22}^{-1}(b,\lambda)F_{12}^*(b,\lambda)\right) - \left(F_{12}F_{22}^{-1}F^*_{12}-F_{11}\right)(b,\lambda)=0.
\end{equation}

\begin{theorem}\label{shiprop3.2}
For any $\lambda\in\C$ with $\im\lambda\neq 0$, $(F_{12}F_{22}^{-1}F^*_{12}-F_{11})(b,\lambda)=F_{22}^{-1}\left(b,\overline{\lambda}\right)>0$.
\end{theorem}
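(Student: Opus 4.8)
The plan is to invert the Hermitian matrix $F(b,\lambda)$ and recognize the inverse, via the symplectic structure of $Y$, as a $J$-conjugate of $F(b,\overline{\lambda})$; comparing the $(1,1)$ blocks of that identity then produces the Schur complement $F_{11}-F_{12}F_{22}^{-1}F_{12}^*$ on one side and $-F_{22}(b,\overline{\lambda})$ on the other. Since $Y(\cdot,\lambda)$ is a fundamental solution matrix and $J$ is invertible, $F(b,\lambda)=\mp iY^*(b,\lambda)JY(b,\lambda)$ from \eqref{shi3.16} is invertible, so the inversion is legitimate; moreover $F_{22}(b,\lambda)>0$ by Theorem \ref{shiprop3.1}, which is exactly the pivot invertibility needed for the block-inverse (Schur complement) formula.

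First I would use the symplectic relation \eqref{shi3.5}, $Y^*(b,\overline{\lambda})JY(b,\lambda)=J$, together with $J^{-1}=-J$ and $J^2=-I_{2d}$, to solve for the inverse factors. Writing $\ti{Y}:=Y(b,\overline{\lambda})$ one obtains $Y^{-1}(b,\lambda)=-J\ti{Y}^*J$ and hence $\left(Y^*(b,\lambda)\right)^{-1}=-J\ti{Y}J$. Substituting these into the inverse of $F(b,\lambda)=\mp iY^*(b,\lambda)JY(b,\lambda)=\mp iY^{-1}(b,\lambda)J^{-1}\left(Y^*(b,\lambda)\right)^{-1}$ and collapsing the resulting string of $J$'s using $J^2=-I_{2d}$, all the intermediate factors of $J$ cancel and leave
$$ F^{-1}(b,\lambda)=J\,F(b,\overline{\lambda})\,J. $$
I would verify this identity holds uniformly in both sign cases: the scalar $\mp i$ attached to $F(b,\lambda)$ and the opposite scalar $\pm i$ attached to $F(b,\overline{\lambda})$ (since $\im\overline{\lambda}$ has the opposite sign of $\im\lambda$) multiply to $+1$ in each case, so no case distinction survives.

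Next I would read off the $(1,1)$ blocks of both sides. On the right, a direct block multiplication using the block form \eqref{shi3.18} of $F(b,\overline{\lambda})$ gives $J\,F(b,\overline{\lambda})\,J=\left(\begin{smallmatrix} -F_{22} & F_{12}^* \\ F_{12} & -F_{11}\end{smallmatrix}\right)(b,\overline{\lambda})$, whose $(1,1)$ block is $-F_{22}(b,\overline{\lambda})$. On the left, the standard block-inverse formula with pivot $F_{22}(b,\lambda)$ shows the $(1,1)$ block of $F^{-1}(b,\lambda)$ equals $\left(F_{11}-F_{12}F_{22}^{-1}F_{12}^*\right)^{-1}(b,\lambda)$, the inverse Schur complement appearing in \eqref{shi3.22}. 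Equating the two blocks and inverting yields $\left(F_{11}-F_{12}F_{22}^{-1}F_{12}^*\right)(b,\lambda)=-F_{22}^{-1}(b,\overline{\lambda})$, and negating gives precisely $\left(F_{12}F_{22}^{-1}F_{12}^*-F_{11}\right)(b,\lambda)=F_{22}^{-1}(b,\overline{\lambda})$. Finally, since $\im\overline{\lambda}\neq 0$, Theorem \ref{shiprop3.1} applied at $\overline{\lambda}$ gives $F_{22}(b,\overline{\lambda})>0$, hence $F_{22}^{-1}(b,\overline{\lambda})>0$, completing the claim.

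I expect the main obstacle to be the sign bookkeeping: correctly propagating $J^{-1}=-J$, $J^2=-I_{2d}$, and the $\mp i$ versus $\pm i$ scalars through the inversion so that the $J$'s cancel cleanly and the final answer comes out with a positive rather than negative sign. A secondary point worth stating carefully is that the block-inverse formula must be taken with $F_{22}$ (not $F_{11}$) as the pivot so that the emergent Schur complement is exactly $F_{11}-F_{12}F_{22}^{-1}F_{12}^*$, matching \eqref{shi3.22}.
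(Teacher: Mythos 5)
Your proof is correct and follows essentially the same route as the paper: your key identity $F^{-1}(b,\lambda)=JF\left(b,\overline{\lambda}\right)J$ is just a rearrangement of the paper's $F(b,\lambda)JF\left(b,\overline{\lambda}\right)=-J$, both obtained from the symplectic relations \eqref{shi3.5} and \eqref{shi3.6} applied to \eqref{shi3.16}. The paper omits the remaining block algebra (deferring to Shi's discrete proof, which reads the block equations directly off the product identity), whereas you finish with the Schur-complement block-inverse formula with pivot $F_{22}(b,\lambda)$ --- an equivalent computation, and your justification of the invertibility of $F$ and the positivity of $F_{22}\left(b,\overline{\lambda}\right)$ via Theorem \ref{shiprop3.1} is sound.
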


\begin{proof}
From \eqref{shi3.5}, \eqref{shi3.6}, and \eqref{shi3.16} we see that
$$ F(b,\lambda)JF\left(b,\overline{\lambda}\right)  = Y^*(b,\lambda)JY(b,\lambda)JY^*\left(b,\overline{\lambda}\right)JY\left(b,\overline{\lambda}\right) = -J. $$
The rest of the proof is identical to \cite[Proposition 3.2]{shi2} and is omitted.
\end{proof}

Let
\begin{equation}\label{shi3.23}
 \mathscr{C}(b,\lambda):= -F_{22}^{-1}(b,\lambda)F_{12}^*(b,\lambda), \quad \mathscr{R}(b,\lambda):= F_{22}^{-1/2}(b,\lambda).
\end{equation}
It follows from Theorem \ref{shiprop3.2} that \eqref{shi3.15}, that is to say \eqref{shi3.22}, can be recast as
\begin{equation}\label{shi3.24}
 (M-\mathscr{C}(b,\lambda))^*\mathscr{R}^{-1}(b,\lambda)(M-\mathscr{C}(b,\lambda))-\mathscr{R}^2\left(b,\overline{\lambda}\right)=0
\end{equation}
or as 
\begin{equation}\label{shi3.25}
 \left\{\mathscr{R}^{-1}(b,\lambda)(M-\mathscr{C}(b,\lambda))\mathscr{R}^{-1}\left(b,\overline{\lambda}\right)\right\}^*\left\{\mathscr{R}^{-1}(b,\lambda)(M-\mathscr{C}(b,\lambda))\mathscr{R}^{-1}\left(b,\overline{\lambda}\right)\right\}=I_{d}.
\end{equation}

\begin{remark}\label{shiremark3.2}
If the dimension $d=1$, then \eqref{shi3.24} is the equation of a circle. For this reason we call \eqref{shi3.15} and/or \eqref{shi3.24} a Weyl circle equation, and call the matrix set $C(M,b)\le 0$ a Weyl disk; see \cite[Definition 2.11]{cg2}.
\end{remark}

Notice that if $U:=\mathscr{R}^{-1}(b,\lambda)(M-\mathscr{C}(b,\lambda))\mathscr{R}^{-1}\left(b,\overline{\lambda}\right)$, then \eqref{shi3.25} can be written as $U^*U=I_d$, and $U$ is unitary. We then have the following results.

\begin{theorem}\label{shiprop3.3}
The Weyl circle equation \eqref{shi3.24} and/or \eqref{shi3.15} can be expressed via
\begin{equation}\label{shi3.26}
 E_b(\lambda): M=\mathscr{C}(b,\lambda)+\mathscr{R}(b,\lambda)U\mathscr{R}\left(b,\overline{\lambda}\right),
\end{equation}
and the Weyl disk $C(M,b)\le 0$ can be expressed via
\begin{equation}\label{shi3.27}
 \overline{E}_b(\lambda): M=\mathscr{C}(b,\lambda)+\mathscr{R}(b,\lambda)V\mathscr{R}\left(b,\overline{\lambda}\right),
\end{equation}
where $U$ is any matrix on the unit matrix circle $\partial D = \{U:U\in\C^{d\times d}\;\text{is a unitary matrix}\}$ and $V$ is any matrix on the unit matrix disk $D=\{V:V\in\C^{d\times d}\; \text{satisfies}\; V^*V\le I_d\}$.
\end{theorem}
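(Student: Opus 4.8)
The plan is to convert both the Weyl circle equation \eqref{shi3.15}/\eqref{shi3.24} and the Weyl disk inequality $C(M,b)\le 0$ into a single statement about one auxiliary matrix $V$, by an affine change of variables centered at $\mathscr{C}(b,\lambda)$ and scaled by the radii $\mathscr{R}(b,\lambda)$ and $\mathscr{R}\left(b,\overline{\lambda}\right)$. First I would record that by Theorem \ref{shiprop3.1} we have $F_{22}(b,\lambda)>0$ and $F_{22}\left(b,\overline{\lambda}\right)>0$, so the Hermitian square roots $\mathscr{R}(b,\lambda)=F_{22}^{-1/2}(b,\lambda)$ and $\mathscr{R}\left(b,\overline{\lambda}\right)=F_{22}^{-1/2}\left(b,\overline{\lambda}\right)$ are positive definite and invertible. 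Setting
$$ V:=\mathscr{R}^{-1}(b,\lambda)\big(M-\mathscr{C}(b,\lambda)\big)\mathscr{R}^{-1}\left(b,\overline{\lambda}\right), $$
this gives an affine bijection of $\C^{d\times d}$ onto itself whose inverse is exactly $M=\mathscr{C}(b,\lambda)+\mathscr{R}(b,\lambda)V\mathscr{R}\left(b,\overline{\lambda}\right)$, the form appearing in both \eqref{shi3.26} and \eqref{shi3.27}.

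The key step is to compute $C(M,b)$ in closed form in terms of $V$. Starting from \eqref{shi3.17} and its block expansion \eqref{shi3.19}, completing the square exactly as in \eqref{shi3.22} gives $C(M,b)=(M-\mathscr{C})^*F_{22}(M-\mathscr{C})-(F_{12}F_{22}^{-1}F_{12}^*-F_{11})$, and Theorem \ref{shiprop3.2} lets me replace the constant term by $\mathscr{R}^2\left(b,\overline{\lambda}\right)$. Substituting $M-\mathscr{C}=\mathscr{R}(b,\lambda)V\mathscr{R}\left(b,\overline{\lambda}\right)$, using $F_{22}=\mathscr{R}^{-2}(b,\lambda)$, and invoking that $\mathscr{R}(b,\lambda)$ and $\mathscr{R}\left(b,\overline{\lambda}\right)$ are Hermitian, the two inner factors of $\mathscr{R}(b,\lambda)$ cancel and I arrive at the single clean identity
$$ C(M,b)=\mathscr{R}\left(b,\overline{\lambda}\right)\big(V^*V-I_d\big)\mathscr{R}\left(b,\overline{\lambda}\right). $$

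With this identity both assertions follow at once. Since $\mathscr{R}\left(b,\overline{\lambda}\right)$ is Hermitian and invertible, congruence by it preserves the positive semidefinite ordering in both directions, so $C(M,b)=0$ if and only if $V^*V=I_d$, i.e. $V\in\partial D$ is unitary, which is the circle parametrization $E_b(\lambda)$ in \eqref{shi3.26}; likewise $C(M,b)\le 0$ if and only if $V^*V\le I_d$, i.e. $V\in D$, which is the disk parametrization $\overline{E}_b(\lambda)$ in \eqref{shi3.27}. The bijectivity of the change of variables ensures these are genuine parametrizations, not just inclusions.

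The main obstacle is not any single hard estimate but assembling the correct algebraic reductions in the right order: the positive definiteness of $F_{22}(b,\lambda)$ and $F_{22}\left(b,\overline{\lambda}\right)$ from Theorem \ref{shiprop3.1} (so that $\mathscr{R}$ is a genuine invertible Hermitian square root), and above all the identity of Theorem \ref{shiprop3.2} identifying the constant term $F_{12}F_{22}^{-1}F_{12}^*-F_{11}$ with $\mathscr{R}^2\left(b,\overline{\lambda}\right)$ rather than with $\mathscr{R}^2(b,\lambda)$, which is what makes the cross terms cancel cleanly and forces the conjugate parameter $\overline{\lambda}$ into the right-hand scaling factor. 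The one point requiring care is that conjugation by the invertible Hermitian matrix $\mathscr{R}\left(b,\overline{\lambda}\right)$ preserves the matrix ordering in both directions; this is exactly what upgrades each reduction to an equivalence, and hence delivers the full parametrizations over $\partial D$ and $D$.
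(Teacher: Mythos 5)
Your proposal is correct and follows essentially the same route as the paper: the paper proves this theorem implicitly through the completed square \eqref{shi3.22}, the identity of Theorem \ref{shiprop3.2}, the definitions \eqref{shi3.23}, and the observation that $U=\mathscr{R}^{-1}(b,\lambda)\left(M-\mathscr{C}(b,\lambda)\right)\mathscr{R}^{-1}\left(b,\overline{\lambda}\right)$ turns \eqref{shi3.25} into $U^*U=I_d$. Your one refinement --- packaging everything as the single congruence identity $C(M,b)=\mathscr{R}\left(b,\overline{\lambda}\right)\left(V^*V-I_d\right)\mathscr{R}\left(b,\overline{\lambda}\right)$, which makes the disk case $C(M,b)\le 0$ if and only if $V^*V\le I_d$ an explicit equivalence rather than an assertion --- is a worthwhile tightening but not a different method.
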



\begin{definition}
The matrix $\mathscr{C}(b,\lambda)$ is called the center, and the matrices $\mathscr{R}(b,\lambda)$ and $\mathscr{R}\left(b,\overline{\lambda}\right)$ are called the matrix radii, respectively, of the Weyl circle \eqref{shi3.26} and the Weyl disk \eqref{shi3.27}.
\end{definition}


\begin{theorem}\label{shiprop3.4}
For any given $\lambda\in\C$ with $\im\lambda\neq 0$, the matrix sequence $\{\mathscr{R}(b,\lambda)\}$ converges, and $\mathscr{R}_0(\lambda):=\lim_{b\rightarrow \infty}\mathscr{R}(b,\lambda)\ge 0$.
\end{theorem}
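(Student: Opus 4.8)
The plan is to show that the matrix radius $\mathscr{R}(b,\lambda)=F_{22}^{-1/2}(b,\lambda)$ is monotone non-increasing in $b$ and bounded below by $0$, whence the sequence converges to a non-negative limit. The key ingredient is Theorem \ref{shiprop3.1}, which asserts that $F_{22}(b,\lambda)>0$ for all $b\ge t_1$ and that $F_{22}(b,\lambda)$ is non-decreasing with respect to $b$. The bridge from monotonicity of $F_{22}$ to monotonicity of its inverse square root is an elementary operator-monotonicity fact: if $0<P\le Q$ are Hermitian positive definite matrices, then $Q^{-1}\le P^{-1}$, and since $x\mapsto x^{1/2}$ is operator monotone on $[0,\infty)$, also $Q^{-1/2}\le P^{-1/2}$.

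First I would fix $\lambda\in\C$ with $\im\lambda\neq 0$ and recall from Theorem \ref{shiprop3.1} that for any $b_1,b_2\in[t_1,\infty)_\T$ with $b_1\le b_2$ we have $0<F_{22}(b_1,\lambda)\le F_{22}(b_2,\lambda)$, the ordering being in the Loewner (positive-semidefinite) sense; this follows from the integral representation of $F_{22}$ in \eqref{shi3.21} together with the definiteness hypothesis \eqref{shiA1}, since the integrand $(\Upsilon\phi)^*W(\Upsilon\phi)$ is nonnegative and the measure of integration only grows with $b$. Next I would apply the operator-monotonicity of inversion to get $F_{22}^{-1}(b_2,\lambda)\le F_{22}^{-1}(b_1,\lambda)$, and then apply operator-monotonicity of the square-root function to obtain
\begin{equation*}
 \mathscr{R}(b_2,\lambda)=F_{22}^{-1/2}(b_2,\lambda)\le F_{22}^{-1/2}(b_1,\lambda)=\mathscr{R}(b_1,\lambda).
\end{equation*}
Thus $\{\mathscr{R}(b,\lambda)\}$ is a non-increasing sequence of Hermitian positive definite matrices.

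To conclude convergence, I would use the fact that a monotone non-increasing sequence of Hermitian matrices that is bounded below converges. Here $\mathscr{R}(b,\lambda)=F_{22}^{-1/2}(b,\lambda)>0$ for every $b\ge t_1$, so the sequence is bounded below by $0$ in the Loewner order; combined with monotonicity, this forces convergence of the sequence to a limit $\mathscr{R}_0(\lambda):=\lim_{b\to\infty}\mathscr{R}(b,\lambda)$. Since the cone of positive-semidefinite matrices is closed, the limit satisfies $\mathscr{R}_0(\lambda)\ge 0$, as claimed. Concretely, convergence of the matrices can be deduced entrywise: monotonicity in the Loewner order controls the diagonal entries (each $e_k^*\mathscr{R}(b,\lambda)e_k$ is a non-increasing bounded-below scalar sequence, hence convergent), and then the off-diagonal entries are controlled by polarization together with the Cauchy-Schwarz-type bound $|e_j^*\mathscr{R}e_k|^2\le (e_j^*\mathscr{R}e_j)(e_k^*\mathscr{R}e_k)$ available for the positive definite $\mathscr{R}$.

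The main obstacle is simply making the passage from the scalar monotone-convergence argument to genuine convergence of the matrix sequence airtight, since Loewner monotonicity alone does not immediately give convergence of all entries without the boundedness and the operator-monotone reductions above; one must be careful that ``bounded below by $0$'' in the Loewner order, together with monotone decrease, does yield a well-defined Hermitian limit. I expect this to be routine given the operator-monotonicity toolkit, and the remaining verifications—that $F_{22}>0$ and non-decreasing, which are exactly the content of Theorem \ref{shiprop3.1}—are already established, so the proof should be short.
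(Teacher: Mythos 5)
Your proposal is correct and follows essentially the same route as the paper: invoke Theorem \ref{shiprop3.1} to get $F_{22}(b,\lambda)>0$ and non-decreasing, conclude that $\mathscr{R}(b,\lambda)=F_{22}^{-1/2}(b,\lambda)$ is a non-increasing sequence of positive definite Hermitian matrices, and finish with the monotone-convergence fact for Hermitian matrices bounded below. The only difference is that you spell out the operator-monotonicity of inversion and of the square root, and the entrywise convergence argument, which the paper leaves implicit.
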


\begin{proof}
From Theorem \ref{shiprop3.1}, $F_{22}(b,\lambda)>0$ for all $b\ge t_1$, and $F_{22}(b,\lambda)$ is non-decreasing with respect to $b$. Recall from \eqref{shi3.23} that $\mathscr{R}(b,\lambda)=F_{22}^{-1/2}(b,\lambda)$, whence $\{\mathscr{R}(b,\lambda)\}$ is a non-increasing sequence of positive definite matrices. The conclusion follows from the fact that any non-increasing sequence of Hermitian matrices that is bounded below converges to a Hermitian matrix.
\end{proof}


\begin{lemma}\label{shilemma3.4}
For any given $\lambda\in\C$ with $\im\lambda\neq 0$, and $b>\tau\ge t_1$, there exists $V_0\in D$ such that
\begin{equation}\label{shi3.28}
 \mathscr{C}(b,\lambda)-\mathscr{C}(\tau,\lambda) = \mathscr{R}(\tau,\lambda)V_0\mathscr{R}\left(\tau,\overline{\lambda}\right) - \mathscr{R}(b,\lambda)V_0\mathscr{R}\left(b,\overline{\lambda}\right).
\end{equation}
\end{lemma}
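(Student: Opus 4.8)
The plan is to read the identity \eqref{shi3.28} as the assertion that a single contraction $V_0\in D$ parametrizes \emph{one common point} of the two Weyl disks, and to produce such a $V_0$ by a fixed-point argument. Rearranging \eqref{shi3.28}, the conclusion is equivalent to
\[
\mathscr{C}(b,\lambda)+\mathscr{R}(b,\lambda)V_0\mathscr{R}\left(b,\overline{\lambda}\right)
=\mathscr{C}(\tau,\lambda)+\mathscr{R}(\tau,\lambda)V_0\mathscr{R}\left(\tau,\overline{\lambda}\right),
\]
that is, the point of $\overline{E}_b(\lambda)$ carried by $V_0$ in \eqref{shi3.27} must coincide with the point of $\overline{E}_\tau(\lambda)$ carried by the \emph{same} $V_0$. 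Forcing the two parametrizations to agree on a single matrix is precisely a fixed-point problem, and this is the organizing idea.

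First I would record the facts that make the parametrizations well behaved. By Theorem \ref{shiprop3.1}, for $\im\lambda\neq 0$ and $b\geq\tau\geq t_1$ the blocks $F_{22}(b,\lambda)$ and $F_{22}(\tau,\lambda)$ are positive definite, so all four radii $\mathscr{R}(b,\lambda),\mathscr{R}(\tau,\lambda),\mathscr{R}(b,\overline{\lambda}),\mathscr{R}(\tau,\overline{\lambda})$ are positive definite and invertible. Hence, by Theorem \ref{shiprop3.3}, every $M\in\overline{E}_\tau(\lambda)$ has the unique representation $M=\mathscr{C}(\tau,\lambda)+\mathscr{R}(\tau,\lambda)V\mathscr{R}(\tau,\overline{\lambda})$ with $V\in D$, recovered by $V=\mathscr{R}^{-1}(\tau,\lambda)(M-\mathscr{C}(\tau,\lambda))\mathscr{R}^{-1}(\tau,\overline{\lambda})$, and similarly with $\tau$ replaced by $b$. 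The other ingredient is nestedness: by Theorem \ref{shithm3.1} the map $t\mapsto C(M,t)$ is non-decreasing, so $C(M,b)\geq C(M,\tau)$ for $b>\tau$ (equivalently, from \eqref{shi3.20} the increment $F(b,\lambda)-F(\tau,\lambda)$ is nonnegative and \eqref{shi3.17} applies), whence $\overline{E}_b(\lambda)\subseteq\overline{E}_\tau(\lambda)$.

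Next I would introduce the affine self-map of the disk. Let $G:D\to\C^{d\times d}$ send $V$ to the $\tau$-parameter of the $b$-point it labels,
\[
G(V):=\mathscr{R}^{-1}(\tau,\lambda)\Big(\mathscr{C}(b,\lambda)+\mathscr{R}(b,\lambda)V\mathscr{R}(b,\overline{\lambda})-\mathscr{C}(\tau,\lambda)\Big)\mathscr{R}^{-1}(\tau,\overline{\lambda}).
\]
This $G$ is affine, hence continuous, while $D=\{V:V^*V\leq I_d\}$ is the closed operator-norm unit ball in $\C^{d\times d}$, which is nonempty, convex, and compact. The decisive point is that $G$ maps $D$ into $D$: for $V\in D$ the argument $\mathscr{C}(b,\lambda)+\mathscr{R}(b,\lambda)V\mathscr{R}(b,\overline{\lambda})$ lies in $\overline{E}_b(\lambda)$ by Theorem \ref{shiprop3.3}, hence in $\overline{E}_\tau(\lambda)$ by nestedness, so by the unique-representation fact its $\tau$-parameter $G(V)$ satisfies $G(V)^*G(V)\leq I_d$. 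Brouwer's fixed-point theorem then yields $V_0\in D$ with $G(V_0)=V_0$; unwinding this equality gives the displayed equality of the two boundary points, and rearranging returns exactly \eqref{shi3.28}.

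The step I expect to be the main obstacle is the self-map property $G(D)\subseteq D$, since the whole argument rests on it. The real work there is twofold: justifying that the disk parametrization is a genuine bijection onto $\overline{E}_\tau(\lambda)$ (which needs the invertibility of the radii from Theorem \ref{shiprop3.1}), and confirming the containment $\overline{E}_b(\lambda)\subseteq\overline{E}_\tau(\lambda)$ in the matrix---not merely scalar---sense. Once these are in place, continuity of the affine $G$ together with compactness and convexity of $D$ makes Brouwer immediate, and in particular no Sylvester-type solvability nor any explicit norm estimate on $V_0$ need be carried out by hand.
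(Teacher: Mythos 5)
Your proof is correct and is essentially the same argument the paper relies on: the paper's ``proof'' is simply a citation to Shi \cite[Lemma 3.4]{shi2}, and Shi's proof is precisely this fixed-point argument, using nestedness of the Weyl disks and invertibility of the radii to show that the affine map $V\mapsto \mathscr{R}^{-1}(\tau,\lambda)\bigl(\mathscr{C}(b,\lambda)+\mathscr{R}(b,\lambda)V\mathscr{R}\left(b,\overline{\lambda}\right)-\mathscr{C}(\tau,\lambda)\bigr)\mathscr{R}^{-1}\left(\tau,\overline{\lambda}\right)$ is a continuous self-map of the compact convex set $D$, whose fixed point $V_0$ unwinds to \eqref{shi3.28}.
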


\begin{proof}
See Shi \cite[Lemma 3.4]{shi2}.
\end{proof}


\begin{theorem}\label{shiprop3.5}
For any given $\lambda\in\C$ with $\im\lambda\neq 0$, the matrix sequence $\{\mathscr{C}(b,\lambda)\}$ converges, i.e. the matrix $\mathscr{C}_0(\lambda):=\lim_{b\rightarrow\infty}\mathscr{C}(b,\lambda)$ is well defined.
\end{theorem}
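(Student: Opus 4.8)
The plan is to show that $\{\mathscr{C}(b,\lambda)\}_{b\ge t_1}$ is Cauchy in the finite-dimensional (hence complete) space $\C^{d\times d}$. The two ingredients are the convergence of the radii from Theorem \ref{shiprop3.4} and the difference formula for the centers from Lemma \ref{shilemma3.4}. Throughout, $\|\cdot\|$ denotes the spectral (operator) norm, which is submultiplicative.

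First I would observe that Theorem \ref{shiprop3.4} applies not only to $\lambda$ but also to $\overline{\lambda}$, since $\im\overline{\lambda}=-\im\lambda\neq 0$. Hence both $\{\mathscr{R}(b,\lambda)\}$ and $\{\mathscr{R}(b,\overline{\lambda})\}$ converge as $b\to\infty$; being convergent, each is Cauchy and uniformly bounded, say $\|\mathscr{R}(b,\lambda)\|\le K$ and $\|\mathscr{R}(b,\overline{\lambda})\|\le K$ for all $b\ge t_1$ and some constant $K$. In fact, by the monotonicity established in the proof of Theorem \ref{shiprop3.4}, one may take $K=\max\{\|\mathscr{R}(t_1,\lambda)\|,\|\mathscr{R}(t_1,\overline{\lambda})\|\}$.

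Next, for $b>\tau\ge t_1$, Lemma \ref{shilemma3.4} furnishes a matrix $V_0\in D$, so that $\|V_0\|\le 1$, with
\[
  \mathscr{C}(b,\lambda)-\mathscr{C}(\tau,\lambda)
  = \mathscr{R}(\tau,\lambda)V_0\mathscr{R}\left(\tau,\overline{\lambda}\right)
   - \mathscr{R}(b,\lambda)V_0\mathscr{R}\left(b,\overline{\lambda}\right).
\]
Inserting the intermediate term $\mathscr{R}(b,\lambda)V_0\mathscr{R}(\tau,\overline{\lambda})$ and applying the triangle inequality together with submultiplicativity, I would bound
\[
  \big\|\mathscr{C}(b,\lambda)-\mathscr{C}(\tau,\lambda)\big\|
  \le \big\|\mathscr{R}(\tau,\lambda)-\mathscr{R}(b,\lambda)\big\|\,\big\|\mathscr{R}\left(\tau,\overline{\lambda}\right)\big\|
   + \big\|\mathscr{R}(b,\lambda)\big\|\,\big\|\mathscr{R}\left(\tau,\overline{\lambda}\right)-\mathscr{R}\left(b,\overline{\lambda}\right)\big\|,
\]
where the factor $\|V_0\|\le 1$ has been absorbed. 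Using the uniform bound $K$ together with the Cauchy property of the two radius sequences, the right-hand side tends to $0$ as $b,\tau\to\infty$. Hence $\{\mathscr{C}(b,\lambda)\}$ is Cauchy and therefore converges to a well-defined limit $\mathscr{C}_0(\lambda)$.

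I expect the main subtlety to be that the matrix $V_0$ produced by Lemma \ref{shilemma3.4} depends on both $b$ and $\tau$, which forbids simply freezing $V_0$ and passing to the limit term by term. The device that circumvents this is precisely the uniform operator-norm bound $\|V_0\|\le 1$ coming from $V_0\in D$: because the estimate is uniform over the unit matrix disk, the $b$- and $\tau$-dependence of $V_0$ is harmless, and the convergence of the radii alone forces the centers to converge.
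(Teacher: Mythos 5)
Your proof is correct and follows exactly the route the paper takes: the paper's proof of Theorem \ref{shiprop3.5} simply cites Theorem \ref{shiprop3.4} and Lemma \ref{shilemma3.4}, and your argument supplies the standard details (the telescoping decomposition, the uniform bound $\|V_0\|\le 1$ from $V_0\in D$, and the Cauchy estimate) that the paper leaves implicit, deferring to Shi's discrete-case treatment. Your closing remark correctly identifies the only subtlety, namely the dependence of $V_0$ on $b$ and $\tau$, and why it is harmless.
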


\begin{proof}
The result follows from Theorem \ref{shiprop3.4} and Lemma \ref{shilemma3.4}.
\end{proof}


\begin{theorem}\label{shithm3.2}
For any given $\lambda\in\C$ with $\im\lambda\neq 0$, the matrix circle sequence $\{E_b(\lambda)\}$ and the matrix disk sequence $\left\{\overline{E}_b(\lambda)\right\}$ converge as $b\rightarrow\infty$, and their limiting sets can be represented, respectively, as
\begin{eqnarray*}
 E_0(\lambda): & M = \mathscr{C}_0(\lambda) + \mathscr{R}_0(\lambda)U\mathscr{R}_0\left(\overline{\lambda}\right), \quad U\in\partial D, \\
 \overline{E}_0(\lambda): & M = \mathscr{C}_0(\lambda) + \mathscr{R}_0(\lambda)V\mathscr{R}_0\left(\overline{\lambda}\right), \quad V\in D.
\end{eqnarray*}
\end{theorem}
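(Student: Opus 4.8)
The plan is to realize each disk $\overline{E}_b(\lambda)$ and circle $E_b(\lambda)$ as the image of a \emph{fixed} compact parameter set under a parameterizing map whose coefficients converge, and then to deduce convergence of the sets from uniform convergence of these maps. Guided by Theorem \ref{shiprop3.3}, I would introduce for $V\in D$ the maps
\begin{equation*}
 \psi_b(V):=\mathscr{C}(b,\lambda)+\mathscr{R}(b,\lambda)V\mathscr{R}\left(b,\overline{\lambda}\right),\qquad
 \psi_0(V):=\mathscr{C}_0(\lambda)+\mathscr{R}_0(\lambda)V\mathscr{R}_0\left(\overline{\lambda}\right),
\end{equation*}
so that $\overline{E}_b(\lambda)=\psi_b(D)$, $\overline{E}_0(\lambda)=\psi_0(D)$, and likewise $E_b(\lambda)=\psi_b(\partial D)$, $E_0(\lambda)=\psi_0(\partial D)$. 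The essential inputs are Theorem \ref{shiprop3.4} (whence $\mathscr{R}(b,\lambda)\to\mathscr{R}_0(\lambda)$ and, since $\overline{\lambda}$ also has nonzero imaginary part, $\mathscr{R}(b,\overline{\lambda})\to\mathscr{R}_0(\overline{\lambda})$) and Theorem \ref{shiprop3.5} (whence $\mathscr{C}(b,\lambda)\to\mathscr{C}_0(\lambda)$), together with the fact—again from Theorem \ref{shiprop3.4}—that $\{\mathscr{R}(b,\cdot)\}$ is a non-increasing sequence of positive definite matrices, hence norm-bounded by its value at $t_1$.

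First I would establish the uniform convergence $\sup_{V\in D}\|\psi_b(V)-\psi_0(V)\|\to 0$ as $b\to\infty$. For $V\in D$ we have $V^*V\le I_d$, so $\|V\|\le 1$; splitting the radius term through the triangle inequality gives
\begin{equation*}
 \left\|\mathscr{R}(b,\lambda)V\mathscr{R}\left(b,\overline{\lambda}\right)-\mathscr{R}_0(\lambda)V\mathscr{R}_0\left(\overline{\lambda}\right)\right\|
 \le \left\|\mathscr{R}(b,\lambda)-\mathscr{R}_0(\lambda)\right\|\left\|\mathscr{R}\left(b,\overline{\lambda}\right)\right\|
 + \left\|\mathscr{R}_0(\lambda)\right\|\left\|\mathscr{R}\left(b,\overline{\lambda}\right)-\mathscr{R}_0\left(\overline{\lambda}\right)\right\|.
\end{equation*}
Because $\|\mathscr{R}(b,\overline{\lambda})\|$ stays bounded and both radius differences tend to $0$, this bound is uniform in $V$; adding $\|\mathscr{C}(b,\lambda)-\mathscr{C}_0(\lambda)\|$ then yields the claimed uniform estimate. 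The identical estimate, using $\|U\|=1$ for unitary $U$, covers the circle case on $\partial D$.

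Since $D$ and $\partial D$ are compact and $\psi_b\to\psi_0$ uniformly on them, the images converge in the Hausdorff metric—indeed $d_H(\psi_b(K),\psi_0(K))\le \sup_{V\in K}\|\psi_b(V)-\psi_0(V)\|$ for $K\in\{D,\partial D\}$—which is exactly the asserted convergence of $\{\overline{E}_b(\lambda)\}$ to $\overline{E}_0(\lambda)$ and of $\{E_b(\lambda)\}$ to $E_0(\lambda)$. To reconcile this with the nested structure of Theorem \ref{shithm3.1}, I would note that a decreasing sequence of nonempty compact convex sets converges in Hausdorff distance to its intersection, so $\overline{E}_0(\lambda)=\bigcap_{b\ge t_1}\overline{E}_b(\lambda)$, recovering the limiting set flagged after Theorem \ref{shithm3.1}.

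The only delicate point is that $\mathscr{R}_0(\lambda)$ may be singular (Theorem \ref{shiprop3.4} only gives $\mathscr{R}_0(\lambda)\ge 0$), so $\psi_0$ need no longer be injective and $\overline{E}_0(\lambda)$ may be a lower-dimensional, degenerate disk. I expect this degeneracy, rather than the convergence estimate itself, to be the main thing to handle with care; the virtue of the parameterization-based argument is that the uniform bound above remains valid regardless of the rank of $\mathscr{R}_0(\lambda)$, so the convergence conclusion survives the degenerate case intact, and the rank of $\mathscr{R}_0(\lambda)$ is precisely the invariant taken up in the next section.
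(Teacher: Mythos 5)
Your proposal is correct and follows essentially the same route as the paper, whose proof of Theorem \ref{shithm3.2} is simply the one-line citation of Theorems \ref{shiprop3.3}, \ref{shiprop3.4}, and \ref{shiprop3.5} --- precisely the parameterization over $D$ and $\partial D$ and the convergence of the centers and radii that you use. Your uniform estimate on the parameterizing maps and the resulting Hausdorff convergence (together with the remark on the possibly degenerate limit when $\mathscr{R}_0(\lambda)$ is singular, which the paper addresses in the remark following the theorem) is the natural detailed version of that argument.
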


\begin{proof}
 The result follows from Theorems \ref{shiprop3.3}, \ref{shiprop3.4}, and \ref{shiprop3.5}.
\end{proof}


\begin{remark}
Since $\mathscr{R}_0(\lambda)$ and $\mathscr{R}_0\left(\overline{\lambda}\right)$ may be singular, the set $\overline{E}_0(\lambda)$ may be a reduced matrix disk. We see that $\overline{E}_0(\lambda)$ contains only one element if $\mathscr{R}_0(\lambda)=0$ or $\mathscr{R}_0\left(\overline{\lambda}\right)=0$, and it contains interior points if and only if $\mathscr{R}_0(\lambda)$ and $\mathscr{R}_0\left(\overline{\lambda}\right)$ are both invertible. Although the limiting sets $E_0(\lambda)$ and $\overline{E}_0(\lambda)$ may be a reduced matrix circle and a reduced
matrix disk, respectively, we still give the following definition for convenience.
\end{remark}


\begin{definition}\label{shidef3.2}
The matrix $\mathscr{C}_0(\lambda)$ is called the center, and the matrices $\mathscr{R}_0(\lambda)$ and $\mathscr{R}_0\left(\overline{\lambda}\right)$ are
called the matrix radii of the limiting sets $E_0(\lambda)$ and $\overline{E}_0(\lambda)$, respectively.
\end{definition}


\begin{theorem}\label{shithm3.3}
Assume \eqref{shiA1}. For any given $\lambda\in\C$ with $\im\lambda\neq 0$ and for each $M\in\overline{E}_0(\lambda)$, if $\im\lambda\lessgtr 0$ then $\im M\lessgtr 0$.
\end{theorem}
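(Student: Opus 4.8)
The plan is to show that the defining matrix inequality $C(M,b)\le 0$ of the Weyl disk forces $\im M$ to be sign-definite, by rewriting $C(M,b)$ explicitly as (a multiple of) $\im M$ plus an integral that the definiteness condition \eqref{shiA1} controls. First I would fix $M\in\overline{E}_0(\lambda)$ and, generalizing the function $\chi$ in \eqref{shi3.8} to an arbitrary constant $d\times d$ matrix $M$, introduce the $2d\times d$ matrix solution $\chi_M(t):=Y(t,\lambda)\left(\begin{smallmatrix}I_d\\M\end{smallmatrix}\right)$ of \eqref{maineq}$_\lambda$. By \eqref{shi3.16} and \eqref{shi3.17} one has $C(M,b)=\mp i\,\chi_M^*(b)J\chi_M(b)$, with the upper sign taken for $\im\lambda>0$ and the lower sign for $\im\lambda<0$.

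Next I would extract the key identity. Applying Lemma \ref{lemma2.3} to $\chi_M$ (a solution of \eqref{maineq}$_\lambda$, so with $\eta=\lambda$) gives $2i\im\lambda\int_{\rho(t_0)}^{b}(\Upsilon\chi_M)^*W\,\Upsilon\chi_M\,\nabla t=\chi_M^*(b)J\chi_M(b)-\chi_M^*(\rho(t_0))J\chi_M(\rho(t_0))$. The boundary term at $\rho(t_0)$ collapses using $\Omega^*J\Omega=J$ from \eqref{shi3.4}, since $\chi_M^*(\rho(t_0))J\chi_M(\rho(t_0))=(I_d,M^*)J\left(\begin{smallmatrix}I_d\\M\end{smallmatrix}\right)=-2i\im M$. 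Substituting and multiplying by $\mp i$ then yields
$$ C(M,b)=\pm 2\left(\im\lambda\int_{\rho(t_0)}^{b}(\Upsilon\chi_M)^*W\,\Upsilon\chi_M\,\nabla t-\im M\right), $$
again with the upper sign for $\im\lambda>0$.

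I would then invoke nestedness. Since the disks are nested and decreasing by Theorem \ref{shithm3.1}, membership $M\in\overline{E}_0(\lambda)=\bigcap_{b}\overline{E}_b(\lambda)$ gives $C(M,b)\le 0$ for every $b\ge t_1$. The positivity input is that $\chi_M$ has full column rank $d$, because $Y(\cdot,\lambda)$ is invertible and $\left(\begin{smallmatrix}I_d\\M\end{smallmatrix}\right)$ has rank $d$; hence for each $c\ne 0$ the combination $\chi_M c$ is a nontrivial solution of \eqref{maineq}$_\lambda$, and \eqref{shiA1} forces $c^*\big(\int_{\rho(t_0)}^{b}(\Upsilon\chi_M)^*W\,\Upsilon\chi_M\,\nabla t\big)c=\int_{\rho(t_0)}^{b}(\Upsilon(\chi_M c))^*W\,\Upsilon(\chi_M c)\,\nabla t>0$ for all $b\ge t_1$, so this Hermitian matrix is positive definite. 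Feeding $C(M,b)\le 0$ into the displayed identity gives, for $\im\lambda>0$, $\im M\ge\im\lambda\int_{\rho(t_0)}^{b}(\Upsilon\chi_M)^*W\,\Upsilon\chi_M\,\nabla t$, whose right-hand side is positive definite, and symmetrically $\im M\le\im\lambda\int<0$ for $\im\lambda<0$; in either case $\im M\lessgtr 0$ as $\im\lambda\lessgtr 0$, as claimed.

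The main obstacle is the bookkeeping in the key identity: correctly tracking the $\mp/\pm$ sign convention tied to the sign of $\im\lambda$, and verifying that the boundary contribution at $\rho(t_0)$ collapses to exactly $-2i\im M$ via $\Omega^*J\Omega=J$. Once the identity is in hand the conclusion is immediate from \eqref{shiA1} and Theorem \ref{shithm3.1}; the only remaining point of care is the elementary matrix fact that $\im M\ge P$ with $P>0$ implies $\im M>0$, which upgrades the domination to definiteness.
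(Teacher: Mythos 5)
Your proof is correct and takes essentially the same route as the paper: the paper obtains the same key identity by applying Lemma \ref{lemma2.3} to the full fundamental matrix $Y$ (giving \eqref{shi3.20} and then \eqref{shi3.31}) and compressing with $(I_d,M^*)$ and $\left(\begin{smallmatrix}I_d\\M\end{smallmatrix}\right)$, which is algebraically identical to your direct application to $\chi_M$ with the boundary term collapsing to $-2i\im M$. The remaining steps — nestedness of the disks giving $C(M,b)\le 0$ for all $b\ge t_1$, and the definiteness condition \eqref{shiA1} making the weighted integral positive definite so that the inequality upgrades to strict sign-definiteness of $\im M$ — are used in exactly the same way in the paper's proof.
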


\begin{proof}
Assume that  $\im\lambda\neq 0$ and $M\in\overline{E}_0(\lambda)$. Set
\begin{equation}\label{shi3.30}
 \chi(t,\lambda):=Y(t,\lambda)\left(\begin{smallmatrix}I_d \\ M \end{smallmatrix}\right).
\end{equation}
Then from \eqref{shi3.20} we have that
\begin{eqnarray}
 \int_{\rho(t_0)}^{b} (\Upsilon \chi)^*(t,\lambda)W(t)\Upsilon \chi(t,\lambda)\nabla t 
 &=& (I_d,M^*) \int_{\rho(t_0)}^{b} (\Upsilon Y)^*(t,\lambda)W(t)\Upsilon Y(t,\lambda)\nabla t \left(\begin{smallmatrix}I_d \\ M \end{smallmatrix}\right) \nonumber \\
 &=& \frac{1}{2| \im\lambda|}(I_d,M^*)\left(F(b,\lambda)\pm i J\right) \left(\begin{smallmatrix}I_d \\ M \end{smallmatrix}\right) \nonumber  \\
 &=& \frac{1}{2| \im\lambda|}(I_d,M^*)F(b,\lambda)\left(\begin{smallmatrix}I_d \\ M \end{smallmatrix}\right) \pm \frac{1}{|\im\lambda|}\im M, \label{shi3.31}
\end{eqnarray}
for $\im\lambda\lessgtr 0$. Because the sets $C(M,b)\le 0$ are nested, $\overline{E}_0(\lambda)$ is a subset of $C(M,b)\le 0$ for any $b\ge t_1$. Consequently we have from \eqref{shi3.17} that
$$ (I_d,M^*) F(b,\lambda) \left(\begin{smallmatrix}I_d \\ M \end{smallmatrix}\right)\le 0. $$
This in tandem with \eqref{shi3.31} implies that for $b\ge t_1$ we have
\begin{equation}\label{shi3.32}
 \int_{\rho(t_0)}^{b} (\Upsilon \chi)^*(t,\lambda)W(t)\Upsilon \chi(t,\lambda)\nabla t \le \pm \frac{1}{|\im\lambda|}\im M, \quad \im\lambda\lessgtr 0.
\end{equation}
The result then follows from the above relation and the assumed definiteness condition \eqref{shiA1}.
\end{proof}


\section{Square summable solutions}

We will call $y(\cdot,\lambda)$ a square summable solution of \eqref{maineq} if it is a solution of \eqref{maineq} in $L^2_W\left([\rho(t_0),\infty)_\T\right)$. In this section we will make a connection between square summable solutions of \eqref{maineq} and the elements of the limiting set $\overline{E}_0(\lambda)$ from Theorem \ref{shithm3.2}, obtaining precise relationships among the rank of the matrix radius $\mathscr{R}_0(\lambda)=\lim_{b\rightarrow \infty}F_{22}^{-1/2}(b,\lambda)$ (see Theorem \ref{shiprop3.4}) of the limiting set  $\overline{E}_0(\lambda)$, the number of linearly independent square summable solutions of \eqref{maineq}, and the asymptotic behavior of the eigenvalues of the matrix radius $F_{22}(b,\lambda)$ of the Weyl disk $\overline{E}_b(\lambda)$ from \eqref{shi3.27}. Given the structure and notation established in the previous sections that generalizes the discrete results in Shi \cite{shi2}, the proofs of the following results are omitted, as there is no change necessary from \cite[Section 4]{shi2} except for minor notational adjustments. 


\begin{theorem}
For each $\lambda\in\C$ with $\im\lambda\neq 0$ and for each $M\in\overline{E}_0(\lambda)$, all the columns of $\chi(\cdot,\lambda)$ are in $L^2_W\left([\rho(t_0),\infty)_\T\right)$, where $\chi$ is given in \eqref{shi3.30}.
\end{theorem}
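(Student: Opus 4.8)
The plan is to obtain square summability of $\chi(\cdot,\lambda)$ by letting $b\to\infty$ in the uniform (in $b$) bound already established during the proof of Theorem \ref{shithm3.3}. Recall that membership of $\chi(\cdot,\lambda)$ in $L^2_W\left([\rho(t_0),\infty)_\T\right)$ amounts to the finiteness of the Hermitian matrix $\int_{\rho(t_0)}^{\infty}(\Upsilon\chi)^*(t,\lambda)W(t)\Upsilon\chi(t,\lambda)\nabla t$, since its diagonal entries are precisely the squared weighted norms of the individual columns of $\chi$; thus it suffices to bound this matrix integral independently of $b$ and then pass to the limit.

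First I would record that $M\in\overline{E}_0(\lambda)$ lies in every Weyl disk $C(M,b)\le 0$ with $b\ge t_1$. This is exactly the nestedness furnished by Theorem \ref{shithm3.1} together with the fact that $\overline{E}_0(\lambda)$ is the intersection of these disks, as already invoked in the proof of Theorem \ref{shithm3.3}. Consequently the estimate \eqref{shi3.32},
\[
 \int_{\rho(t_0)}^{b}(\Upsilon\chi)^*(t,\lambda)W(t)\Upsilon\chi(t,\lambda)\nabla t \le \pm\frac{1}{|\im\lambda|}\im M, \quad \im\lambda\lessgtr 0,
\]
holds for all $b\ge t_1$, and its right-hand side is a fixed Hermitian matrix, independent of $b$, which is positive (semi)definite by Theorem \ref{shithm3.3}.

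Next I would pass to the limit $b\to\infty$. Because $W\ge 0$ under \eqref{shiA1}, the integrand $(\Upsilon\chi)^*W\Upsilon\chi$ is positive semidefinite, so the partial integrals form a monotone nondecreasing family of Hermitian matrices in $b$; being bounded above by the $b$-independent matrix $\pm\frac{1}{|\im\lambda|}\im M$, this family converges, by the same matrix monotonicity principle used in the proof of Theorem \ref{shiprop3.4}. Taking the limit preserves the inequality, so
\[
 \int_{\rho(t_0)}^{\infty}(\Upsilon\chi)^*(t,\lambda)W(t)\Upsilon\chi(t,\lambda)\nabla t \le \pm\frac{1}{|\im\lambda|}\im M < \infty, \quad \im\lambda\lessgtr 0.
\]
In particular each diagonal entry, hence the weighted norm of each column of $\chi(\cdot,\lambda)$, is finite, so every column belongs to $L^2_W\left([\rho(t_0),\infty)_\T\right)$.

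The step I expect to require the most care is ensuring the bound is genuinely independent of $b$: the matrix $M$ was originally attached to a finite interval through a boundary-condition matrix $\beta$, and only the nestedness of the disks lets us detach $M\in\overline{E}_0(\lambda)$ from any particular $b$ and insert it into \eqref{shi3.32} for all large $b$ simultaneously. The remaining points are routine: the matrix monotone-convergence argument mirrors the proof of Theorem \ref{shiprop3.4}, and the reduction from finiteness of the matrix integral to square summability of the individual columns is immediate from positive semidefiniteness. I would also note that the columns of $\chi(\cdot,\lambda)$ are solutions of \eqref{maineq}, hence left-dense continuous and integrable on every compact subinterval, so the substantive requirement for membership in $L^2_W\left([\rho(t_0),\infty)_\T\right)$ is precisely the weighted integrability just established.
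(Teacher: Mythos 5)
Your proof is correct and is essentially the paper's own argument: the paper omits the proof by deferring to Shi's discrete treatment, which consists precisely of what you wrote — using nestedness to place $M\in\overline{E}_0(\lambda)$ in every disk $C(M,b)\le 0$ for $b\ge t_1$, invoking the resulting $b$-independent bound \eqref{shi3.32} from the proof of Theorem \ref{shithm3.3}, and letting $b\to\infty$ via monotone convergence of the positive semidefinite partial integrals. No gaps.
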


\begin{cor}\label{shicor4.1}
For each $\lambda\in\C$ with $\im\lambda\neq 0$, system \eqref{maineq} has at least $d$ linearly independent square summable solutions.
\end{cor}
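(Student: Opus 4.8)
The plan is to exhibit an explicit $d$-dimensional family of square summable solutions and then verify their linear independence; the theorem immediately preceding this corollary does all of the analytic work, so what remains is essentially a rank count.

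First I would fix $\lambda\in\C$ with $\im\lambda\neq 0$ and use Theorem \ref{shithm3.2} to ensure that the limiting set $\overline{E}_0(\lambda)$ is nonempty (for instance, taking $V=0$ in the representation of $\overline{E}_0(\lambda)$ yields $M=\mathscr{C}_0(\lambda)\in\overline{E}_0(\lambda)$). Choosing any such $M$, I form the $2d\times d$ matrix solution $\chi(\cdot,\lambda)=Y(\cdot,\lambda)\left(\begin{smallmatrix} I_d \\ M \end{smallmatrix}\right)$ from \eqref{shi3.30}. By the theorem immediately preceding this corollary, every column of $\chi(\cdot,\lambda)$ lies in $L^2_W\left([\rho(t_0),\infty)_\T\right)$, so these $d$ columns are square summable solutions of \eqref{maineq}.

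Next I would argue that the $d$ columns are linearly independent. Since $Y(\cdot,\lambda)$ is a fundamental solution matrix of \eqref{maineq}, the matrix $Y(t,\lambda)$ is invertible at every $t$; in particular, evaluating at $t=\rho(t_0)$ gives $\chi(\rho(t_0),\lambda)=\Omega\left(\begin{smallmatrix} I_d \\ M\end{smallmatrix}\right)$, where $\Omega$ is unitary by \eqref{shi3.4}. Because $\left(\begin{smallmatrix} I_d \\ M\end{smallmatrix}\right)$ has full column rank $d$ (its top block is $I_d$) and $\Omega$ is invertible, the $d$ initial vectors forming the columns of $\chi(\rho(t_0),\lambda)$ are linearly independent, whence so are the $d$ corresponding solutions.

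Finally I would confirm that this linear independence, understood in the classical sense for solutions of \eqref{maineq}, persists in the quotient space $L^2_W\left([\rho(t_0),\infty)_\T\right)$. By the definiteness condition \eqref{shiA1}, any nontrivial solution $y$ satisfies $\int_{\rho(t_0)}^{t}(\Upsilon y)^*W\Upsilon y\,\nabla s>0$ for all $t\ge t_1$, so no nontrivial linear combination of the columns of $\chi(\cdot,\lambda)$ can have vanishing $W$-seminorm; hence the $d$ columns remain linearly independent as elements of $L^2_W\left([\rho(t_0),\infty)_\T\right)$, and \eqref{maineq} has at least $d$ linearly independent square summable solutions. The only point requiring care is this last identification between classical and $L^2_W$ independence, but it follows at once from \eqref{shiA1}; everything else reduces to the invertibility of $\Omega$ and $Y(t,\lambda)$, so I anticipate no substantive obstacle.
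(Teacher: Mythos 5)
Your proof is correct and follows essentially the same route the paper intends: the paper omits this proof, deferring to Shi \cite[Section 4]{shi2}, where the corollary is obtained exactly as you do --- pick $M\in\overline{E}_0(\lambda)$, invoke the preceding theorem to place the columns of $\chi(\cdot,\lambda)=Y(\cdot,\lambda)\left(\begin{smallmatrix} I_d \\ M \end{smallmatrix}\right)$ in $L^2_W\left([\rho(t_0),\infty)_\T\right)$, and use the full column rank of $\left(\begin{smallmatrix} I_d \\ M \end{smallmatrix}\right)$ together with the definiteness condition \eqref{shiA1} for linear independence. Your extra care in distinguishing classical independence from independence in the quotient space $L^2_W$ is exactly the right point to check, and \eqref{shiA1} settles it as you say.
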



\begin{theorem}
For $\mathscr{R}_0(\lambda)=\lim_{b\rightarrow \infty}F_{22}^{-1/2}(b,\lambda)$, set
$$ r(\lambda):=\rank\mathscr{R}_0(\lambda), \quad \im\lambda\neq 0, $$
and let $k=d+\min\left\{r(\lambda),r\left(\overline{\lambda}\right)\right\}$. Then system \eqref{maineq} has at least $k$ linearly independent square summable solutions.
\end{theorem}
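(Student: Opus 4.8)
```latex
The plan is to show that the $d$ square summable solutions coming from $\overline{E}_0(\lambda)$ (via Theorem~\ref{shithm3.2} and its preceding corollary) can be supplemented by $\min\{r(\lambda),r(\overline{\lambda})\}$ additional independent square summable solutions, obtained by exploiting the nonzero rank of the limiting matrix radius. The key observation is that $\mathscr{R}_0(\lambda)=\lim_{b\to\infty}F_{22}^{-1/2}(b,\lambda)\ge 0$ has rank $r(\lambda)$, so the limiting disk $\overline{E}_0(\lambda)$ has ``room to move'' in exactly $r(\lambda)$ directions: from the representation $M=\mathscr{C}_0(\lambda)+\mathscr{R}_0(\lambda)V\mathscr{R}_0(\overline{\lambda})$ with $V\in D$, varying $V$ produces a family of distinct matrices $M$ spanning a space whose dimension is governed by $\min\{r(\lambda),r(\overline{\lambda})\}$. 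Each such $M$ yields via $\chi(t,\lambda)=Y(t,\lambda)\left(\begin{smallmatrix}I_d\\M\end{smallmatrix}\right)$ a $2d\times d$ block of square summable solutions by the first theorem of this section.

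First I would fix $\lambda$ with $\im\lambda\neq 0$ and set $r:=\min\{r(\lambda),r(\overline{\lambda})\}$. Taking $M_0=\mathscr{C}_0(\lambda)$ (the center, corresponding to $V=0$) gives $d$ square summable solutions, namely the columns of $\chi_0(t)=Y(t,\lambda)\left(\begin{smallmatrix}I_d\\M_0\end{smallmatrix}\right)$; these are independent since the top block is $I_d$ and $Y$ is a fundamental matrix. Next I would choose $r$ matrices $V_1,\dots,V_r\in D$ so that the differences $\mathscr{R}_0(\lambda)V_j\mathscr{R}_0(\overline{\lambda})$ are nonzero and such that the corresponding solution blocks contribute new independent directions. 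Concretely, because $\mathscr{R}_0(\lambda)$ and $\mathscr{R}_0(\overline{\lambda})$ each have rank at least $r$, one can pick rank-one $V_j$ aligning the ranges so that $M_j-M_0=\mathscr{R}_0(\lambda)V_j\mathscr{R}_0(\overline{\lambda})$ are $r$ linearly independent matrices. The difference of the associated solutions, $\chi_j(t)-\chi_0(t)=Y(t,\lambda)\left(\begin{smallmatrix}0\\M_j-M_0\end{smallmatrix}\right)$, has vanishing top block, so these $r$ new solutions are linearly independent from the original $d$ (whose top blocks together span $\C^d$) and from each other. Since each $M_j\in\overline{E}_0(\lambda)$, the first theorem of this section guarantees all these solutions lie in $L^2_W([\rho(t_0),\infty)_\T)$, giving $d+r=k$ independent square summable solutions.

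The main obstacle I anticipate is the bookkeeping of the rank count: one must verify that the map $V\mapsto \mathscr{R}_0(\lambda)V\mathscr{R}_0(\overline{\lambda})$, restricted to $V\in D$, genuinely produces $\min\{r(\lambda),r(\overline{\lambda})\}$ independent output directions rather than fewer, and that these translate into honestly independent solutions in $L^2_W$ (as opposed to being identified to zero under the degenerate $W$-inner product). The rank of this bilinear map is $\rank\mathscr{R}_0(\lambda)\cdot\rank\mathscr{R}_0(\overline{\lambda})$ in general, but since we only need the solutions to be linearly independent as functions, it suffices to extract $r$ independent differences, which the range-alignment argument provides; the independence as elements of $L^2_W$ then follows because distinct solutions of \eqref{maineq} with distinct boundary data at $\rho(t_0)$ cannot be $W$-equivalent by the definiteness condition \eqref{shiA1}. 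As this mirrors the discrete development closely, I would, following the convention announced at the start of this section, note that the argument is the same as in Shi \cite[Section~4]{shi2} up to notational changes.
```
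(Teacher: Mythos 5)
Your overall route is the same as the paper's intended one (the paper omits the proof, deferring verbatim to the discrete argument in Shi \cite[Section 4]{shi2}): supplement the $d$ square summable columns of $\chi_0=Y(\cdot,\lambda)\left(\begin{smallmatrix}I_d\\ \mathscr{C}_0(\lambda)\end{smallmatrix}\right)$ by solutions extracted from differences of elements of $\overline{E}_0(\lambda)$; such differences lie in the column span of $\phi(\cdot,\lambda)$, so linear independence can be read off from the coefficient vectors relative to the fundamental matrix $Y(\cdot,\lambda)$. Your reduction of independence in $L^2_W$ to independence as solutions via the definiteness condition \eqref{shiA1} is also correct.

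The one step that would fail as written is your concrete selection criterion: choosing rank-one $V_j$ so that the matrices $M_j-M_0=\mathscr{R}_0(\lambda)V_j\mathscr{R}_0\left(\overline{\lambda}\right)$ are linearly independent \emph{as matrices} does not guarantee $r$ new independent solutions, because the new solutions are of the form $\phi(\cdot,\lambda)\eta$ with $\eta$ in the \emph{ranges} of the $M_j-M_0$, and it is the span of these ranges that must have dimension $r$. Concretely, if $\mathscr{R}_0(\lambda)=\mathscr{R}_0\left(\overline{\lambda}\right)=I_2$ (so $r=2$) and $V_1=e_1e_1^*$, $V_2=e_1e_2^*$ for the standard unit vectors $e_1,e_2$, then $M_1-M_0$ and $M_2-M_0$ are linearly independent matrices, yet both blocks $\phi(\cdot,\lambda)(M_j-M_0)$ consist of scalar multiples of the single solution $\phi(\cdot,\lambda)e_1$, so only one new solution arises. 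The repair is exactly the range alignment you allude to, and it must be made explicit: assuming $r=\min\left\{r(\lambda),r\left(\overline{\lambda}\right)\right\}\ge 1$ (the case $r=0$ is Corollary \ref{shicor4.1}), take $V_j=u_jw_j^*$ with $\|u_j\|=\|w_j\|=1$, where $u_1,\dots,u_r$ are chosen so that $\mathscr{R}_0(\lambda)u_1,\dots,\mathscr{R}_0(\lambda)u_r$ are linearly independent (possible since $\rank\mathscr{R}_0(\lambda)\ge r$) and each $w_j$ satisfies $\mathscr{R}_0\left(\overline{\lambda}\right)w_j\neq 0$. Then $V_j^*V_j\le I_d$, so $M_j\in\overline{E}_0(\lambda)$ by Theorem \ref{shithm3.2}, and $M_j-M_0=\left(\mathscr{R}_0(\lambda)u_j\right)\left(\mathscr{R}_0\left(\overline{\lambda}\right)w_j\right)^*$ has range spanned by $\mathscr{R}_0(\lambda)u_j$; since some entry of $\mathscr{R}_0\left(\overline{\lambda}\right)w_j$ is nonzero, the solution $\phi(\cdot,\lambda)\mathscr{R}_0(\lambda)u_j$ is a nonzero scalar multiple of a column of $\chi_j-\chi_0$ and hence square summable. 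The combined coefficient matrix $\left(\begin{smallmatrix}I_d & 0\\ \mathscr{C}_0(\lambda) & N\end{smallmatrix}\right)$ with $N=\left(\mathscr{R}_0(\lambda)u_1,\dots,\mathscr{R}_0(\lambda)u_r\right)$ has rank $d+r$, yielding the required $k=d+r$ linearly independent square summable solutions. With this correction your argument is complete and coincides with the discrete proof the paper cites.
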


By Theorem \ref{shiprop3.1}, for each $\lambda\in\C$ with $\im\lambda\neq 0$ and for any $b\ge t_1$, $F_{22}(b,\lambda)>0$. If $\mu_j(b)$ are the eigenvalues of $F_{22}(b,\lambda)$ for $1\le j\le d$, then $\mu_j(b)>0$ for $1\le j\le d$, and they can be arranged as $\mu_1(b)\le\mu_2(b)\le\cdots\le\mu_d(b)$. We have the following result.


\begin{theorem}\label{shithm4.3}
For each $\lambda\in\C$ with $\im\lambda\neq 0$, $\rank \mathscr{R}_0(\lambda)=r(\lambda)$ if and only if 
$$ \lim_{b\rightarrow\infty}\mu_j(b)=\gamma_j, \quad 1\le j\le r(\lambda), $$ 
are finite and positive, and 
$$ \lim_{b\rightarrow\infty}\mu_j(b)=\infty, \quad r(\lambda)+1\le j\le d. $$ 
In addition, $\gamma_1^{-1/2},\gamma_2^{-1/2},\cdots,\gamma_{r(\lambda)}^{-1/2}$ are the $r(\lambda)$ positive eigenvalues of $\mathscr{R}_0(\lambda)$.
\end{theorem}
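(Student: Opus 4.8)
The plan is to read off the eigenvalues of $\mathscr{R}(b,\lambda)=F_{22}^{-1/2}(b,\lambda)$ directly from those of $F_{22}(b,\lambda)$ and then transfer the convergence $\mathscr{R}(b,\lambda)\to\mathscr{R}_0(\lambda)$ from Theorem \ref{shiprop3.4} down to the level of eigenvalues. Since $F_{22}(b,\lambda)>0$ has eigenvalues $\mu_1(b)\le\cdots\le\mu_d(b)$, the matrix $\mathscr{R}(b,\lambda)$ has eigenvalues $\mu_1^{-1/2}(b)\ge\cdots\ge\mu_d^{-1/2}(b)>0$; listed in decreasing order, $\mu_j^{-1/2}(b)$ is exactly the $j$-th largest eigenvalue of $\mathscr{R}(b,\lambda)$.

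First I would invoke the standard fact that, for Hermitian matrices, the eigenvalues arranged in decreasing order are continuous functions of the matrix (Weyl's perturbation inequalities). Because $\mathscr{R}(b,\lambda)\to\mathscr{R}_0(\lambda)$ as $b\to\infty$, the $j$-th largest eigenvalue $\mu_j^{-1/2}(b)$ of $\mathscr{R}(b,\lambda)$ then converges to the $j$-th largest eigenvalue $\nu_j$ of the limit $\mathscr{R}_0(\lambda)\ge 0$, where $\nu_1\ge\cdots\ge\nu_d\ge 0$. Writing $r(\lambda)=\rank\mathscr{R}_0(\lambda)$, exactly $r(\lambda)$ of the $\nu_j$ are strictly positive, so $\nu_1\ge\cdots\ge\nu_{r(\lambda)}>0$ and $\nu_{r(\lambda)+1}=\cdots=\nu_d=0$. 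Monotonicity from Theorem \ref{shiprop3.1} (that $F_{22}(b,\lambda)$ is non-decreasing in $b$, hence by Weyl monotonicity each $\mu_j(b)$ is non-decreasing) ensures each scalar limit genuinely exists in $(0,\infty]$ rather than merely along subsequences.

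The correspondence is then immediate and yields both directions of the biconditional at once. For $1\le j\le r(\lambda)$ we have $\mu_j^{-1/2}(b)\to\nu_j>0$, so $\mu_j(b)\to\nu_j^{-2}=:\gamma_j$ is finite and positive, and $\gamma_j^{-1/2}=\nu_j$ exhibits $\gamma_1^{-1/2},\dots,\gamma_{r(\lambda)}^{-1/2}$ as precisely the positive eigenvalues of $\mathscr{R}_0(\lambda)$. For $r(\lambda)<j\le d$ we instead have $\mu_j^{-1/2}(b)\to\nu_j=0$ with $\mu_j^{-1/2}(b)>0$, forcing $\mu_j(b)\to\infty$. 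Since the number of indices $j$ for which $\mu_j(b)$ has a finite positive limit equals the number of positive $\nu_j$, which is $\rank\mathscr{R}_0(\lambda)$, the stated eigenvalue asymptotics hold with parameter $r$ if and only if $\rank\mathscr{R}_0(\lambda)=r$.

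I expect the only delicate point to be the careful bookkeeping of the two opposite orderings — increasing for the $\mu_j(b)$ but decreasing for the eigenvalues of $\mathscr{R}(b,\lambda)$ and of $\mathscr{R}_0(\lambda)$ — together with the fact that $\mathscr{R}_0(\lambda)$ may be singular, so that the zero eigenvalues of the limit are exactly what record the divergence $\mu_j(b)\to\infty$. Once continuity of eigenvalues and the monotonicity of Theorem \ref{shiprop3.1} are in place, no further estimates are needed; this parallels the discrete argument of Shi \cite[Section 4]{shi2} after the notational substitutions.
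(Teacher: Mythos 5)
Your proof is correct and takes essentially the same route as the paper, which omits this proof and defers to the discrete argument of Shi \cite[Section 4]{shi2}: namely, pass from the matrix convergence $\mathscr{R}(b,\lambda)\to\mathscr{R}_0(\lambda)$ of Theorem \ref{shiprop3.4} to convergence of the ordered eigenvalues via Weyl's perturbation and monotonicity inequalities, using the order-reversing spectral map $x\mapsto x^{-1/2}$ to identify the $j$-th largest eigenvalue of $\mathscr{R}(b,\lambda)$ with $\mu_j^{-1/2}(b)$, so that positive limit eigenvalues of $\mathscr{R}_0(\lambda)$ correspond to finite positive limits $\gamma_j$ and zero eigenvalues to $\mu_j(b)\to\infty$. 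Your handling of the two opposite orderings and of the possibly singular limit $\mathscr{R}_0(\lambda)$ is exactly the required bookkeeping, and both directions of the equivalence follow as you state.
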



\begin{lemma}
For each $\lambda\in\C$ with $\im\lambda\neq 0$, system \eqref{maineq} has exactly $d+l$ linearly independent square summable solutions if and only if there exists a $d\times l$ matrix $\Lambda$ with $\rank\Lambda=l$ such that $\phi(\cdot,\lambda)\Lambda\in L^2_W\left([\rho(t_0),\infty)_\T\right)$, and $\eta\in\Ran\Lambda=\left\{\Lambda v:v\in\C^l\right\}$ if $\phi(\cdot,\lambda)\eta\in L^2_W\left([\rho(t_0),\infty)_\T\right)$ for some $\eta\in\C^d$.
\end{lemma}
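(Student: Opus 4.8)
The plan is to reduce the whole statement to a single dimension count for the space $S$ of square summable solutions of \eqref{maineq}$_\lambda$. Throughout I would fix $\lambda$ with $\im\lambda\neq 0$ and fix some $M\in\overline{E}_0(\lambda)$, which is nonempty by Theorem \ref{shithm3.2}; by the first theorem of this section every column of
$$ \chi(\cdot,\lambda)=Y(\cdot,\lambda)\left(\begin{smallmatrix} I_d \\ M\end{smallmatrix}\right)=\theta(\cdot,\lambda)+\phi(\cdot,\lambda)M $$
lies in $L^2_W\left([\rho(t_0),\infty)_\T\right)$. I would then introduce the subspace
$$ \mathcal{E}:=\left\{\eta\in\C^d:\phi(\cdot,\lambda)\eta\in L^2_W\left([\rho(t_0),\infty)_\T\right)\right\}\subseteq\C^d, $$
and set $l:=\dim\mathcal{E}$. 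The core claim I would establish is that $S$ has dimension exactly $d+l$.

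To prove the core claim I would write a general solution as $y=\theta c_1+\phi c_2$ with $c_1,c_2\in\C^d$, since the columns of $Y=(\theta,\phi)$ form a basis of the $2d$-dimensional solution space, and rewrite it using $\theta=\chi-\phi M$ as
$$ y=\chi c_1+\phi(c_2-Mc_1). $$
Because $\chi c_1\in L^2_W\left([\rho(t_0),\infty)_\T\right)$ for every $c_1$, the solution $y$ is square summable if and only if $\phi(c_2-Mc_1)\in L^2_W\left([\rho(t_0),\infty)_\T\right)$, that is, if and only if $c_2-Mc_1\in\mathcal{E}$. Hence the linear map $(c_1,\eta)\mapsto\theta c_1+\phi(Mc_1+\eta)$, with $c_1\in\C^d$ and $\eta\in\mathcal{E}$, surjects onto $S$; it is injective, for if $\theta c_1+\phi(Mc_1+\eta)=0$ then invertibility of the fundamental matrix $Y$ forces $c_1=0$ and $Mc_1+\eta=0$, whence $\eta=0$. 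Its domain has dimension $d+l$, so $\dim S=d+l$.

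With the core claim in hand the stated equivalence follows directly. For the forward direction I would take a basis $\eta_1,\dots,\eta_l$ of $\mathcal{E}$ and set $\Lambda=(\eta_1,\dots,\eta_l)$, a $d\times l$ matrix with $\rank\Lambda=l$, $\phi(\cdot,\lambda)\Lambda\in L^2_W\left([\rho(t_0),\infty)_\T\right)$, and $\Ran\Lambda=\mathcal{E}$, which is precisely the asserted condition. Conversely, given such a $\Lambda$, the property $\phi(\cdot,\lambda)\Lambda\in L^2_W$ gives $\Ran\Lambda\subseteq\mathcal{E}$ while the implication ($\phi(\cdot,\lambda)\eta\in L^2_W\Rightarrow\eta\in\Ran\Lambda$) gives $\mathcal{E}\subseteq\Ran\Lambda$; thus $\Ran\Lambda=\mathcal{E}$ and $l=\rank\Lambda=\dim\mathcal{E}$, so the count yields $\dim S=d+l$.

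The point needing the most care is the bookkeeping of linear independence inside the quotient space $L^2_W\left([\rho(t_0),\infty)_\T\right)$, where functions are identified once their $W$-seminorm vanishes. Here the definiteness condition \eqref{shiA1} is essential: any nontrivial solution $y$ satisfies $\int_{\rho(t_0)}^{t_1}(\Upsilon y)^*W\Upsilon y\,\nabla t>0$, so it has strictly positive $W$-norm and stays nonzero in $L^2_W$. Consequently a linear combination of solutions vanishes in $L^2_W$ only when it is the zero solution, so linear independence of solutions as functions coincides with their linear independence in $L^2_W$, and the number $d+l$ genuinely counts linearly independent square summable solutions. This is the same mechanism used in the discrete development of Shi \cite[Section 4]{shi2}, and no change beyond notation is expected.
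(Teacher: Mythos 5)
Your proof is correct. The paper gives no independent proof of this lemma---it explicitly omits the proofs of Section 6, deferring to the discrete argument of Shi \cite[Section 4]{shi2}---and your argument (square summability of the columns of $\chi=\theta+\phi M$ for $M\in\overline{E}_0(\lambda)$, the decomposition $y=\chi c_1+\phi(c_2-Mc_1)$ of a general solution, and the use of the definiteness condition \eqref{shiA1} to identify linear independence of solutions as functions with their independence in the quotient space $L^2_W\left([\rho(t_0),\infty)_\T\right)$) is essentially that same argument transcribed to the time-scale setting, organized as a clean dimension count.
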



\begin{theorem}\label{shithm4.4}
If $\rank\mathscr{R}_0(\lambda)=r(\lambda)$ for $\im\lambda\neq 0$, then system \eqref{maineq} has exactly $d+r(\lambda)$ linearly independent square summable solutions and thus, $r(\lambda)$ is independent of the coefficient matrix $\alpha$ of the left boundary condition in \eqref{shi2.11}.
\end{theorem}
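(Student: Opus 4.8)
The plan is to combine the lemma immediately preceding this theorem, which converts the count of square summable solutions into a linear-algebra question about $\phi$, with the spectral description of $F_{22}(b,\lambda)$ from Theorem~\ref{shithm4.3}. By that preceding lemma the number of linearly independent square summable solutions of \eqref{maineq} equals $d+\dim V$, where
$$ V:=\left\{\eta\in\C^d:\phi(\cdot,\lambda)\eta\in L^2_W\left([\rho(t_0),\infty)_\T\right)\right\}, $$
so it suffices to prove $\dim V=r(\lambda)$. The first step translates membership in $V$ into a boundedness statement: from \eqref{shi3.21} one reads off $\eta^*F_{22}(b,\lambda)\eta=2|\im\lambda|\int_{\rho(t_0)}^{b}(\Upsilon(\phi\eta))^*W\,\Upsilon(\phi\eta)\,\nabla t$, and since by Theorem~\ref{shiprop3.1} this is nonnegative and nondecreasing in $b$, I would record that $\eta\in V$ if and only if $\lim_{b\to\infty}\eta^*F_{22}(b,\lambda)\eta<\infty$.

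For the bound $\dim V\le r(\lambda)$ I would argue by contradiction using the Courant--Fischer min-max. On $V$ the limit $q(\eta):=\lim_b\eta^*F_{22}(b,\lambda)\eta$ is finite; via polarization and the Cauchy--Schwarz inequality for the positive forms $F_{22}(b,\lambda)$ one checks that $q$ is a genuine finite Hermitian form on $V$, hence bounded on the unit sphere of $V$, say by $Q$, and monotonicity gives $\eta^*F_{22}(b,\lambda)\eta\le Q|\eta|^2$ for all $\eta\in V$ and all $b$. If $\dim V\ge r(\lambda)+1$, inserting an $(r(\lambda)+1)$-dimensional subspace $S\subseteq V$ into the min-max formula for the $(r(\lambda)+1)$-st smallest eigenvalue of $F_{22}(b,\lambda)$ yields $\mu_{r(\lambda)+1}(b)\le Q$ for every $b$, contradicting $\mu_{r(\lambda)+1}(b)\to\infty$ from Theorem~\ref{shithm4.3}.

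For the reverse bound $\dim V\ge r(\lambda)$ I would exhibit an explicit $r(\lambda)$-dimensional subspace inside $V$, namely $\Ran(G)$ with $G:=\lim_{b\to\infty}F_{22}^{-1}(b,\lambda)$. By Theorem~\ref{shiprop3.4} the radii $\mathscr{R}(b,\lambda)=F_{22}^{-1/2}(b,\lambda)$ converge to $\mathscr{R}_0(\lambda)\ge0$, so $G=\mathscr{R}_0^2(\lambda)\ge0$ exists with $\rank G=\rank\mathscr{R}_0(\lambda)=r(\lambda)$. Writing $G=U\,\diag\{G_1,0\}\,U^*$ with $G_1>0$ of size $r(\lambda)$ and $\tilde F(b):=U^*F_{22}^{-1}(b,\lambda)U\to\diag\{G_1,0\}$, any $\eta\in\Ran(G)$ has $U^*\eta=(\xi^{\trans},0)^{\trans}$, and then $\eta^*F_{22}(b,\lambda)\eta=\xi^*S(b)^{-1}\xi$, where $S(b)$ is the Schur complement of the lower-right block of $\tilde F(b)$; the goal is to show $S(b)$ stays bounded below.

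This Schur-complement lower bound is the step I expect to be the main obstacle. Positivity of $\tilde F(b)$ gives $S(b)\le\tilde F_{11}(b)\to G_1$ for free, but a lower bound is delicate: the lower-right block of $\tilde F(b)$ tends to $0$ and its inverse blows up, so naive estimates leave open the possibility $S(b)\to0$. The decisive ingredient is the \emph{monotonicity} in Theorem~\ref{shiprop3.1}: $F_{22}(b,\lambda)$ nondecreasing forces $F_{22}^{-1}(b,\lambda)$ nonincreasing, hence $F_{22}^{-1}(b,\lambda)\ge G$ and $\tilde F(b)\ge\diag\{G_1,0\}$ for all $b$. Using the variational characterization of the Schur complement—$\xi^*S(b)\xi$ is the minimum of $w^*\tilde F(b)w$ over all $w\in\C^d$ whose first $r(\lambda)$ coordinates equal $\xi$—this lower bound passes through the minimum to give $\xi^*S(b)\xi\ge\xi^*G_1\xi$, so $G_1\le S(b)\le\tilde F_{11}(b)\to G_1$ and $\eta^*F_{22}(b,\lambda)\eta=\xi^*S(b)^{-1}\xi\to\xi^*G_1^{-1}\xi<\infty$. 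Thus $\Ran(G)\subseteq V$ and $\dim V=r(\lambda)$, so \eqref{maineq} has exactly $d+r(\lambda)$ linearly independent square summable solutions. The final assertion is then immediate: this count is a property of \eqref{maineq} and $\lambda$ alone—the space of square summable solutions makes no reference to $\alpha$—so $r(\lambda)$, being this count minus the fixed integer $d$, is independent of $\alpha$.
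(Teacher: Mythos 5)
Your proof is correct, but it does not follow the paper's own route: the paper gives no argument for this theorem at all, stating at the start of Section 6 that the proofs carry over from the discrete case \cite[Section 4]{shi2}, and Shi's proof there is organized around limits of eigenvectors rather than your Loewner-order bounds. In that argument one takes orthonormal eigenvectors $u_j(b)$ of $F_{22}(b,\lambda)$, extracts a sequence $b_k\to\infty$ along which $u_j(b_k)\to u_j$ for every $j$, and feeds $\Lambda=(u_1,\dots,u_{r(\lambda)})$ into the preceding lemma; square summability of $\phi(\cdot,\lambda)u_j$ follows from the bound $u_j^*F_{22}(b,\lambda)u_j=\lim_k u_j(b_k)^*F_{22}(b,\lambda)u_j(b_k)\le\liminf_k u_j(b_k)^*F_{22}(b_k,\lambda)u_j(b_k)=\gamma_j$ (fix $b$, use continuity of the fixed form and $F_{22}(b,\lambda)\le F_{22}(b_k,\lambda)$ for $b_k\ge b$), and exactness follows by expanding any admissible $\eta$ in the eigenbasis at $b_k$, since a nonzero limiting component along a direction with $\mu_j(b_k)\to\infty$ forces $\eta^*F_{22}(b_k,\lambda)\eta\to\infty$. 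You reach both conclusions from the same two ingredients (the preceding lemma and Theorem \ref{shithm4.3}) with different technology: for existence, the limit $G=\lim_{b\to\infty}F_{22}^{-1}(b,\lambda)=\mathscr{R}_0^2(\lambda)$, the inequality $F_{22}^{-1}(b,\lambda)\ge G$, and the variational characterization of the Schur complement give a uniform bound for $\eta^*F_{22}(b,\lambda)\eta$ on $\Ran(G)=\Ran(\mathscr{R}_0(\lambda))$; for exactness, polarization plus Courant--Fischer. Both routes are sound; yours avoids all compactness/subsequence extractions and makes monotonicity the sole driving mechanism, at the price of the Schur-complement detour that you rightly flag as the delicate step, whereas Shi's ``fix $b$ first, then compare with $F_{22}(b_k,\lambda)$'' trick gets the uniform bound more cheaply once convergent eigenvectors are in hand. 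Incidentally, your existence step admits a shortcut that bypasses the block decomposition entirely: conjugating $F_{22}^{-1}(b,\lambda)\ge\mathscr{R}_0^2(\lambda)$ by $F_{22}^{1/2}(b,\lambda)$ gives $\left\|F_{22}^{1/2}(b,\lambda)\mathscr{R}_0(\lambda)\right\|^2\le 1$, i.e.\ $\mathscr{R}_0(\lambda)F_{22}(b,\lambda)\mathscr{R}_0(\lambda)\le I_d$ for all $b$, so $\eta=\mathscr{R}_0(\lambda)\zeta$ satisfies $\eta^*F_{22}(b,\lambda)\eta\le\zeta^*\zeta$ uniformly in $b$.
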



\begin{theorem}
For each $\lambda\in\C$ with $\im\lambda\neq 0$, system \eqref{maineq} has exactly $d+r(\lambda)$ linearly independent square summable solutions if and only if $\lim_{b\rightarrow\infty}\mu_j(b)=\gamma_j$ are finite and positive for $1\le j\le r(\lambda)$, and $\lim_{b\rightarrow\infty}\mu_j(b)=\infty$ for $r(\lambda)+1\le j \le d$. In addition, $\gamma_1^{-1/2},\gamma_2^{-1/2},\cdots,\gamma_{r(\lambda)}^{-1/2}$ are the $r(\lambda)$ positive eigenvalues of $\mathscr{R}_0(\lambda)$.
\end{theorem}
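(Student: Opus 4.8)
The plan is to obtain this statement as a synthesis of the two immediately preceding results, Theorem \ref{shithm4.3} and Theorem \ref{shithm4.4}, using the defining identity $r(\lambda)=\rank\mathscr{R}_0(\lambda)$ as the bridge between the asymptotic eigenvalue data of $F_{22}(b,\lambda)$ and the count of square summable solutions of \eqref{maineq}. No new analysis beyond these two theorems is required; the content lies in threading them together in both directions.

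First I would prove sufficiency. Suppose the eigenvalue condition holds, namely $\lim_{b\rightarrow\infty}\mu_j(b)=\gamma_j$ is finite and positive for $1\le j\le r(\lambda)$ while $\lim_{b\rightarrow\infty}\mu_j(b)=\infty$ for $r(\lambda)+1\le j\le d$. By Theorem \ref{shithm4.3}, this asymptotic behavior is equivalent to $\rank\mathscr{R}_0(\lambda)=r(\lambda)$, and it simultaneously identifies $\gamma_1^{-1/2},\dots,\gamma_{r(\lambda)}^{-1/2}$ as the $r(\lambda)$ positive eigenvalues of $\mathscr{R}_0(\lambda)$, which settles the ``In addition'' clause at once. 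With $\rank\mathscr{R}_0(\lambda)=r(\lambda)$ in hand, Theorem \ref{shithm4.4} yields that \eqref{maineq} has exactly $d+r(\lambda)$ linearly independent square summable solutions, as desired.

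For necessity, suppose \eqref{maineq} has exactly $d+r(\lambda)$ linearly independent square summable solutions. Applying Theorem \ref{shithm4.4} to the rank $\rank\mathscr{R}_0(\lambda)$ itself shows that the system has exactly $d+\rank\mathscr{R}_0(\lambda)$ such solutions; since the number of linearly independent square summable solutions is a well-defined nonnegative integer, comparing the two counts forces $\rank\mathscr{R}_0(\lambda)=r(\lambda)$. Invoking the equivalence in Theorem \ref{shithm4.3} then returns the claimed asymptotic behavior of the $\mu_j(b)$ together with the eigenvalue identification.

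The only step that requires care, and the closest thing here to an obstacle, is ensuring that the count of linearly independent square summable solutions is genuinely unambiguous, so that equality of the two counts in the necessity direction actually pins down the rank. This well-definedness is already guaranteed by the developments leading to Theorem \ref{shithm4.4}, together with the lower bound of Corollary \ref{shicor4.1}. Consequently, following the pattern of the other results in this section drawn from Shi \cite{shi2}, the detailed computation can be suppressed.
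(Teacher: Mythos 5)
Your proof is correct and takes essentially the same approach as the paper: the paper omits this proof, deferring to Shi's discrete case, where the result is obtained exactly as you do, by combining Theorem \ref{shithm4.3} (the rank of $\mathscr{R}_0(\lambda)$ equals $r(\lambda)$ if and only if the stated eigenvalue asymptotics hold, with the identification of the positive eigenvalues) with Theorem \ref{shithm4.4} (the number of linearly independent square summable solutions is exactly $d+\rank\mathscr{R}_0(\lambda)$). Your necessity argument, pinning down $\rank\mathscr{R}_0(\lambda)=r(\lambda)$ by comparing the well-defined solution count against the count Theorem \ref{shithm4.4} produces, is precisely the intended way to close the loop.
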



\section{Classification of singular linear Hamiltonian nabla systems}

In this section, we introduce the defect index $d(\lambda)$ of the minimal operator $H_0$ (defined in Section $4$) and $\lambda$. We establish a precise correspondence between $d(\lambda)$ and the number of linearly independent square summable solutions of \eqref{maineq}. Based on this correspondence, we show that the defect indices $d_{\pm}$ of $H_0$ are not less than $d$. In addition, we obtain a precise correspondence between
$d(\lambda)$ and $\rank\mathscr{R}_0(\lambda)$. Moreover, we discuss the defect index problem for the special case where $P(t)$ and $W(t)$ are both real, and the largest defect index problem for the general case. Building on the above results, we present a suitable classification for singular Hamiltonian nabla systems by using the positive and negative defect indices of $H_0$. Lastly, we derive several equivalent conditions on the limit circle and the limit point cases. The proofs of the first few results below carry over from the discrete case unchanged \cite[Section 5]{shi2}.


\begin{theorem}\label{shithm5.1}
For all $\lambda\in\C$, the defect index $d(\lambda)$ of the minimal operator $H_0$ and $\lambda$ is equal to
the number of linearly independent square summable solutions of system \eqref{maineq}.
\end{theorem}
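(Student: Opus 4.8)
The plan is to establish the equality $d(\lambda) = \dim\Ker(\overline{\lambda} - H_0^*)$ first, and then identify $\Ker(\overline{\lambda}-H_0^*)$ with the space of square summable solutions of \eqref{maineq}$_\lambda$. Since $H_0$ is Hermitian (Lemma \ref{shilemma2.6}) and densely defined, hence symmetric, the defect index $d(\lambda) = \dim\Ran(\overline{\lambda}-H_0)^\perp$ equals $\dim\Ker(\lambda-H_0^*)$ by the general Hilbert-space fact recorded just after Definition \ref{shidef2.1}; one must be slightly careful with which of $\lambda,\overline{\lambda}$ appears, but the preceding remark states precisely that $\Ran(\overline{\lambda}-K)^\perp=\Ker(\lambda-K^*)$ for symmetric $K$. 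Because $H_0^*=H$ is the maximal operator, this reduces the theorem to showing that $\dim\Ker(\lambda-H)$ equals the number of linearly independent square summable solutions.

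Next I would unwind the operator equation $Hy=\lambda y$. By the definition of $H$, a function $y\in L^2_W([\rho(t_0),\infty)_\T)$ lies in $\Ker(\lambda-H)$ precisely when there is $f\in L^2_W$ with $\mathscr{L}y = W(\Upsilon f)$ and $Hy=f=\lambda y$, i.e.\ $\mathscr{L}y(t) = \lambda W(t)\Upsilon y(t)$ for $t\in[\rho(t_0),\infty)_\T$. Recalling that $\mathscr{L}y = Jy^\nabla - P\Upsilon y$, this is exactly the statement that $y$ solves \eqref{maineq}$_\lambda$ and lies in $L^2_W([\rho(t_0),\infty)_\T)$, i.e.\ $y$ is a square summable solution. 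Thus $\Ker(\lambda-H)$ is exactly the set of square summable solutions, and I would conclude that $d(\lambda) = \dim\Ker(\lambda - H)$ equals the number of linearly independent square summable solutions of \eqref{maineq}.

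The one genuinely delicate point, and what I expect to be the main obstacle, is the passage between counting solutions as classical vector functions and counting their equivalence classes in the quotient Hilbert space $L^2_W$. Because $W$ may be singular, two distinct solutions of \eqref{maineq}$_\lambda$ could in principle coincide as elements of $L^2_W$ (that is, $\|y-z\|_W=0$), which would make the naive dimension count collapse. To rule this out I would invoke the definiteness condition \eqref{shiA1}: if $y$ is a nontrivial solution, then $\int_{t_0}^t(\Upsilon y)^*W\Upsilon y\,\nabla s>0$ for $t\in[t_1,\infty)_\T$, so $\|y\|_W\neq 0$ and, more generally, linearly independent solutions remain linearly independent in $L^2_W$. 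This uses that any nonzero solution of \eqref{maineq}$_\lambda$ is determined by $2d$ initial data and is again a solution, so a nontrivial linear combination is nontrivial and therefore has positive norm. Once this is secured the dimension bookkeeping is exact.

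Finally I would note that the finiteness and well-definedness of the count is already guaranteed by the earlier analysis: Corollary \ref{shicor4.1} shows there are at least $d$ linearly independent square summable solutions, and the total is bounded by $2d$ since the solution space of the first-order system \eqref{nabsym} has dimension $2d$; thus $d(\lambda)$ is a finite integer between $d$ and $2d$. Assembling these pieces, the chain $d(\lambda) = \dim\Ran(\overline{\lambda}-H_0)^\perp = \dim\Ker(\lambda - H_0^*) = \dim\Ker(\lambda - H)$, together with the identification of $\Ker(\lambda-H)$ with the square summable solution space and the injectivity of the solution-to-$L^2_W$ map furnished by \eqref{shiA1}, completes the argument.
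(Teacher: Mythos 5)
Your proof is correct and takes essentially the same route as the paper's: the paper omits the argument, citing that it carries over unchanged from Shi's discrete case, and Shi's proof is precisely your chain $d(\lambda)=\dim\Ran\left(\overline{\lambda}-H_0\right)^\perp=\dim\Ker(\lambda-H_0^*)=\dim\Ker(\lambda-H)$ using $H_0^*=H$, followed by identifying $\Ker(\lambda-H)$ with the square summable solutions of \eqref{maineq}$_\lambda$ and invoking the definiteness condition \eqref{shiA1} so that linearly independent classical solutions remain independent in the quotient space $L^2_W\left([\rho(t_0),\infty)_\T\right)$.
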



\begin{remark}
As $H_0$ is Hermitian by Lemma $\ref{shilemma2.6}$, the defect index $d(\lambda)$ of $H_0$ is constant in the upper and lower half
planes, respectively. Set
\begin{equation}\label{shi5.2}
 d_+ = d(i), \quad d_- = d(-i); 
\end{equation}
these are called the positive and negative defect indices of the minimal operator, respectively. The following result then follows directly from Corollary $\ref{shicor4.1}$ and Theorem $\ref{shithm5.1}$.
\end{remark}


\begin{theorem}\label{shithm5.2}
The number of linearly independent square summable solutions of system \eqref{maineq} in the upper half plane is $d_+$, and in the lower half plane is $d_-$. These numbers are independent of $\lambda$, with $d_{\pm}\ge d$.  
\end{theorem}


\begin{theorem}\label{shithm5.3}
The rank of $\mathscr{R}_0(\lambda)$ is equal to a constant $r_+$ for all $\lambda$ with $\im\lambda > 0$, and equal to a constant $r_-$ for all $\lambda$ with $\im\lambda < 0$. Moreover, these ranks satisfy the equations
\begin{equation}\label{shi5.3}
 d_+ = d + r_+, \qquad d_- = d + r_-.
\end{equation}
\end{theorem}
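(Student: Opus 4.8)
The plan is to combine the exact count of square summable solutions furnished by Theorem \ref{shithm4.4} with the $\lambda$-independence of that count established in Theorem \ref{shithm5.2}. The entire statement is really a bookkeeping consequence of those two results, so I would keep the argument short and purely algebraic.

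First I would fix $\lambda\in\C$ with $\im\lambda>0$ and set $r(\lambda):=\rank\mathscr{R}_0(\lambda)$. By Theorem \ref{shithm4.4}, system \eqref{maineq} then has exactly $d+r(\lambda)$ linearly independent square summable solutions. On the other hand, Theorem \ref{shithm5.2} asserts that the number of linearly independent square summable solutions of \eqref{maineq} in the upper half plane equals the positive defect index $d_+$ and is independent of $\lambda$. Equating these two expressions for the same count yields $d_+=d+r(\lambda)$ for every $\lambda$ in the upper half plane. Since $d$ and $d_+$ are fixed constants, $r(\lambda)$ is forced to be constant on $\{\im\lambda>0\}$; I would denote this common value by $r_+:=d_+-d$. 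This simultaneously gives $\rank\mathscr{R}_0(\lambda)=r_+$ for all $\lambda$ with $\im\lambda>0$ and the first identity $d_+=d+r_+$.

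The argument for the lower half plane is identical, invoking the negative defect index $d_-$ together with the corresponding count of square summable solutions there; setting $r_-:=d_--d$ then delivers $\rank\mathscr{R}_0(\lambda)=r_-$ for $\im\lambda<0$ and $d_-=d+r_-$. This completes the proof.

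There is no genuine analytic obstacle, since the difficult content has already been absorbed into Theorems \ref{shithm4.4} and \ref{shithm5.2}. The only point demanding care is to verify that the quantity counted in Theorem \ref{shithm4.4}, namely $d+\rank\mathscr{R}_0(\lambda)$, is literally the same quantity shown to be $\lambda$-independent in Theorem \ref{shithm5.2}; this identification is guaranteed by Theorem \ref{shithm5.1}, which equates the defect index $d(\lambda)$ with the number of linearly independent square summable solutions. Once that matching is made explicit, both the constancy of $\rank\mathscr{R}_0(\lambda)$ in each half plane and the relations $d_\pm=d+r_\pm$ follow at once.
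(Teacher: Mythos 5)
Your proof is correct and is essentially the argument the paper intends: the paper omits the proof by stating that it carries over unchanged from the discrete case in Shi \cite{shi2}, where the result is obtained exactly as you do, by combining the exact count $d+\rank\mathscr{R}_0(\lambda)$ from Theorem \ref{shithm4.4} with the $\lambda$-independence of the number of square summable solutions (Theorem \ref{shithm5.2}, via the identification in Theorem \ref{shithm5.1}). Your closing remark correctly identifies the only point requiring care, namely that both theorems count the same quantity.
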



\begin{lemma}
For any $\lambda_{0}\in\C$, the fundamental matrix solution $\Phi(\cdot,\lambda_0)$ of \eqref{maineq}$_{\lambda_0}$ with initial condition $\Phi(\rho(t_0),\lambda_0)=I_{2d}$ satisfies
\begin{equation}\label{Phinormal}
  \det\left(\Phi^{*}(t,\lambda_0)\Phi(t,\lambda_0)\right) = 1 
\end{equation}
for all $t\in[\rho(t_0),\infty)_\T$.
\end{lemma}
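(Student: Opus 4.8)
The plan is to reduce the statement to a scalar Wronskian computation. Writing $w(t):=\det\Phi(t,\lambda_0)$, we have $\det\bigl(\Phi^*\Phi\bigr)=\overline{\det\Phi}\,\det\Phi=|w|^2$, so \eqref{Phinormal} is equivalent to $g:=|w|^2$ being identically $1$. Since $w(\rho(t_0))=\det I_{2d}=1$ gives $g(\rho(t_0))=1$, and since a function with vanishing nabla derivative on $[\rho(t_0),\infty)_\T$ is constant, it suffices to prove $g^\nabla\equiv 0$.

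First I would extract a scalar version of the symplectic identity. Taking determinants in \eqref{imsstar} and cancelling the invertible factor $\det J$ yields $\overline{\det\bigl(I_{2d}-\nu\mathcal{S}\bigr)}\,\det\bigl(I_{2d}-\nu\mathcal{S}\bigr)=1$, so $\bigl|\det\bigl(I_{2d}-\nu(t)\mathcal{S}(t,\lambda_0)\bigr)\bigr|=1$ for every $t$. Next, the useful formula $\Phi^\rho=\Phi-\nu\Phi^\nabla=(I_{2d}-\nu\mathcal{S})\Phi$ (from \eqref{nabsym}) gives, upon taking determinants and using that $\det$ commutes with composition by $\rho$, the relation $w^\rho=\det(I_{2d}-\nu\mathcal{S})\,w$. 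Hence $|w^\rho|=|w|$, that is $g^\rho=g$ on all of $[\rho(t_0),\infty)_\T$.

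At each left-scattered point this already suffices: from $g^\rho=g-\nu g^\nabla$ and $g^\rho=g$ with $\nu(t)>0$ I obtain $g^\nabla(t)=0$. The genuinely essential case is the left-dense points, where $g^\rho=g$ is vacuous and a separate argument is required. There the nabla derivative agrees with the ordinary derivative, and the Abel--Liouville formula gives $w^\nabla=(\tr\mathcal{S})\,w$, so $g^\nabla=2\,\operatorname{Re}(\tr\mathcal{S})\,|w|^2$. Reading \eqref{nabsym} at a left-dense point ($\nu=0$, hence $E=I_d$), the generator is $\mathcal{S}=\left(\begin{smallmatrix} A & B+\lambda_0 W_2 \\ C-\lambda_0 W_1 & -A^* \end{smallmatrix}\right)$, so $\tr\mathcal{S}=\tr A-\tr A^*=2i\,\im(\tr A)$ is purely imaginary; therefore $\operatorname{Re}(\tr\mathcal{S})=0$ and $g^\nabla=0$ at these points as well.

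Combining the two cases, $g^\nabla\equiv 0$ on $[\rho(t_0),\infty)_\T$, so $g$ is constant and equals $g(\rho(t_0))=1$, which is exactly \eqref{Phinormal}. The main obstacle I anticipate is precisely the left-dense case: one must recognize that $g^\rho=g$ carries no information on a left-dense interval, and that the vanishing of $g^\nabla$ there rests on $\tr\mathcal{S}$ being purely imaginary — equivalently, on the continuous-time symplectic structure that survives the $\nu\to 0$ limit of \eqref{Stilde}.
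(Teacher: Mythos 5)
Your proof is correct and follows essentially the same route as the paper's: both arguments split $[\rho(t_0),\infty)_\T$ into left-scattered points, handled by combining $\Phi^\rho=(I_{2d}-\nu\mathcal{S})\Phi$ with the symplectic identity \eqref{imsstar}, and left-dense points, handled by the observation that $\tr\mathcal{S}(t,\lambda_0)$ is purely imaginary when $\nu(t)=0$ so that the real part of the Liouville exponent vanishes. The only cosmetic difference is that you work pointwise with the scalar $g=|\det\Phi|^2$ and the classical Abel--Liouville formula, whereas the paper packages the left-dense case through the time-scale Liouville formula $\det\Phi=\hat e_q(\cdot,\rho(t_0))$ of \cite{cormani} and the nabla exponential function.
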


\begin{proof}
By the initial condition, \eqref{Phinormal} holds at $t=\rho(t_0)$. If $t\in[t_0,\infty)_\T$ is a left-scattered point, then from \eqref{nabsym} we have
\begin{equation}\label{shi5.9temp}
 \left(I_{2d}-\nu(t)\mathcal{S}(t,\lambda_0)\right)\Phi(t,\lambda_0) = \Phi^\rho(t,\lambda_0). 
\end{equation}
Consequently by \eqref{imsstar} and \eqref{shi5.9temp} we have $\det\big(\Phi^{*}(t,\lambda_0)\Phi(t,\lambda_0)\big) = \det\big(\Phi^{\rho*}(t,\lambda_0)\Phi^{\rho}(t,\lambda_0)\big)$, so that 
$$ \left[\det\left(\Phi^{*}(t,\lambda_0)\Phi(t,\lambda_0)\right)\right]^{\nabla}(t)=0 $$
if $t$ is a left-scattered point.
 
By Liouville's formula on time scales \cite{cormani}, we have
\begin{equation}\label{LiouvilleThm}
 \det\Phi(t,\lambda_0) = \hat{e}_{q}(t,\rho(t_0)) \det \Phi(\rho(t_0),\lambda_0)=\hat{e}_{q}(t,\rho(t_0)), \quad t\in[\rho(t_0),\infty)_\T,
\end{equation}
where $q(t)=\lambda_1\oplus_{\nu}\lambda_2\oplus_{\nu}+\cdots+\oplus_{\nu}\lambda_{2d}$ for eigenvalues $\lambda_1,\cdots,\lambda_{2d}$ of $\mathcal{S}(\cdot,\lambda)$ given in \eqref{nabsym}, and $x=\hat{e}_q(\cdot,\rho(t_0))$ is the nabla exponential function \cite[Chapter 3]{bp2} that uniquely solves the initial value problem
$$ x^\nabla(t)=q(t)x(t), \quad x^\rho(t_0)=1. $$
It follows that $\det\left(\Phi^{*}(t,\lambda_0)\Phi(t,\lambda_0)\right)=\hat{e}_{(q\oplus_{\nu}q^*)}(t,\rho(t_0))$.
Suppose $t\in[t_0,\infty)_\T$ is a left-dense point. Then $\nu(t)=0$, $(q\oplus_{\nu}q^*)=(q+q^*)=\tr (\mathcal{S}+\mathcal{S}^*)=0$ from \eqref{nabsym}, and
$$ \det\left(\Phi^{*}(t,\lambda_0)\Phi(t,\lambda_0)\right) = \hat{e}_{(q+q^*)}(t,\rho(t_0)) = \hat{e}_{0}(t,\rho(t_0))\equiv 1.  $$
Therefore \eqref{Phinormal} holds for all $t\in[\rho(t_0),\infty)_\T$.
\end{proof}


\begin{theorem}[The Largest Defect Index Theorem]\label{shithm5.5}
If there exists $\lambda_0\in\C$ such that all the solutions of \eqref{maineq}$_{\lambda_{0}}$ are in $L^2_W\left([\rho(t_0),\infty)_\T\right)$, then all solutions of \eqref{maineq}$_{\lambda}$ are in $L^2_W\left([\rho(t_0),\infty)_\T\right)$, for any $\lambda\in\C$.
\end{theorem}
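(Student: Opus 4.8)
The plan is to fix a fundamental matrix $\Phi(\cdot,\lambda_0)$ of \eqref{maineq}$_{\lambda_0}$ normalized by $\Phi^\rho(t_0,\lambda_0)=I_{2d}$, whose $2d$ columns all lie in $L^2_W\left([\rho(t_0),\infty)_\T\right)$ by hypothesis, and to show that an arbitrary solution $y(\cdot,\lambda)$ of \eqref{maineq}$_{\lambda}$ also lies in $L^2_W$. Since the solution space is $2d$-dimensional, it is enough to bound $\|y\|_b=\big(\int_{\rho(t_0)}^{b}(\Upsilon y)^*W\Upsilon y\,\nabla t\big)^{1/2}$ by a constant independent of $b$ and then let $b\to\infty$.

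First I would recast \eqref{maineq}$_{\lambda}$ as an inhomogeneous $\lambda_0$-problem. With $\mathscr{L}y=Jy^\nabla-P\Upsilon y$ as in \eqref{shieq15}, the equation $\mathscr{L}y=\lambda W\Upsilon y$ reads $\mathscr{L}y-\lambda_0W\Upsilon y=(\lambda-\lambda_0)W\Upsilon y$, so $y$ solves the $\lambda_0$-system forced by $(\lambda-\lambda_0)W\Upsilon y$. Because $\mathcal{S}(\cdot,\lambda_0)$ is $\nu$-regressive by \eqref{imsstar}, the variation-of-constants formula for nabla dynamic systems \cite[Chapter 3]{bp2} gives (up to the usual $\rho$-evaluation of the inverse in the nabla kernel)
\begin{equation*}
 y(t)=\Phi(t,\lambda_0)y^\rho(t_0)+(\lambda-\lambda_0)\int_{\rho(t_0)}^{t}\Phi(t,\lambda_0)\Phi^{-1}(s,\lambda_0)(-J)W(s)\Upsilon y(s)\,\nabla s .
\end{equation*}
The symplectic invariant \eqref{shi2.6}, together with the normalization of $\Phi$, yields $\Phi^*(s,\overline{\lambda_0})J\Phi(s,\lambda_0)=J$, hence $\Phi^{-1}(s,\lambda_0)(-J)=-J\Phi^*(s,\overline{\lambda_0})$; the Volterra kernel therefore equals $-\Phi(t,\lambda_0)J\Phi^*(s,\overline{\lambda_0})$, exhibiting both $\Phi(\cdot,\lambda_0)$ and its conjugate counterpart $\Phi(\cdot,\overline{\lambda_0})$.

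Next I would estimate this representation in the $L^2_W\left([\rho(t_0),b]_\T\right)$ norm. By the triangle inequality the free term contributes $\|\Phi(\cdot,\lambda_0)y^\rho(t_0)\|_b\le\|\Phi(\cdot,\lambda_0)y^\rho(t_0)\|_W=:C_0<\infty$, a constant independent of $b$ since the columns of $\Phi(\cdot,\lambda_0)$ are square-summable. For the integral term, writing $W=W^{1/2}W^{1/2}$ and applying the Cauchy--Schwarz inequality to the inner $\nabla$-integral produces a bound $|\lambda-\lambda_0|\,K\,\|y\|_b$, in which the constant $K$ is controlled by the finite Gram quantities $\int_{\rho(t_0)}^{\infty}(\Upsilon\Phi)^*(\cdot,\lambda_0)W\Upsilon\Phi(\cdot,\lambda_0)\,\nabla t$ and its analogue for $\Phi(\cdot,\overline{\lambda_0})$. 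This gives $\|y\|_b\le C_0+|\lambda-\lambda_0|K\|y\|_b$; once $|\lambda-\lambda_0|K<1$ the last term is absorbed to give $\|y\|_b\le C_0/(1-|\lambda-\lambda_0|K)$ uniformly in $b$, whence $y\in L^2_W$. A Gronwall refinement on $[\rho(t_0),b]_\T$ removes the smallness restriction, so every $\lambda$ for which the conjugate fundamental matrix is available is reached.

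The hard part will be controlling $\Phi^{-1}(\cdot,\lambda_0)$ in the $L^2_W$ norm, equivalently showing that the conjugate fundamental matrix $\Phi(\cdot,\overline{\lambda_0})$ is itself square-summable, since its columns are exactly the rows of $\Phi^{-1}(\cdot,\lambda_0)$ that enter $K$. Here the preceding normalization lemma \eqref{Phinormal} is decisive: the constancy $\left|\det\Phi(t,\lambda_0)\right|\equiv 1$ forbids degeneration of $\Phi$ and expresses $\Phi^{-1}$ through the adjugate with unimodular determinant, while \eqref{shi2.6} ties $\Phi^{-1}(\cdot,\lambda_0)$ to $\Phi(\cdot,\overline{\lambda_0})$. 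To organize the passage between half-planes I would first use Theorems \ref{shithm5.1} and \ref{shithm5.2}: the hypothesis forces the maximal count in the half-plane of $\lambda_0$, so every $\lambda$ on that side already has all solutions in $L^2_W$ without invoking the integral representation. It then remains to cross to the conjugate half-plane and the real axis; the clean route is to capture a real point $\lambda_1$ (where $\overline{\lambda_1}=\lambda_1$, so the conjugate fundamental matrix coincides with one already known to be square-summable) by a continuity/Fatou argument as $\lambda\to\lambda_1$ from the good half-plane, and then to run the variation-of-constants estimate centered at $\lambda_1$ to obtain the remaining $\lambda$, closing the argument via Theorem \ref{shithm5.3}. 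Securing that real-axis limit, and with it the square-summability of the conjugate solutions, is the genuine obstacle; the residual bookkeeping of the partial left-shift $\Upsilon$ and the $\rho$-evaluations in the nabla kernel is routine but must be tracked so that exactly the finite Gram quantities appear.
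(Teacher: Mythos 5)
Your diagnosis of where the difficulty lies is exactly right, but the route you propose around it has a genuine gap, and it sits precisely at the point you flag. The variation-of-constants kernel forces the conjugate fundamental matrix into your estimate: by \eqref{shi2.6} and the normalization, $\Phi^*\left(s,\overline{\lambda_0}\right)J\Phi(s,\lambda_0)=J$, so $\Phi^{-1}(s,\lambda_0)=-J\Phi^*\left(s,\overline{\lambda_0}\right)J$, and your constant $K$ involves the Gram integral of $\Phi\left(\cdot,\overline{\lambda_0}\right)$, about which the hypothesis says nothing when $\im\lambda_0\neq 0$. Your repair --- Theorems \ref{shithm5.1} and \ref{shithm5.2} to fill the open half-plane containing $\lambda_0$, then a ``continuity/Fatou'' passage to a real point $\lambda_1$ --- fails at the Fatou step: locally uniform convergence $\Phi(\cdot,\lambda)\to\Phi(\cdot,\lambda_1)$ plus Fatou's lemma only gives $\|\Phi(\cdot,\lambda_1)\|_W\le\liminf\|\Phi(\cdot,\lambda)\|_W$, and you have no uniform bound on $\|\Phi(\cdot,\lambda)\|_W$ as $\im\lambda\downarrow 0$. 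The defect-index argument guarantees that each such norm is finite in the open half-plane, but gives no control on its size; a locally uniform bound of this kind is essentially equivalent to what the theorem is asserting. Nor can any abstract argument do the crossing: for a general symmetric operator $d_+$ and $d_-$ may differ, so carrying the maximal defect index across the real axis must use the concrete structure of the system. (When $\im\lambda_0=0$ your scheme does close, since then $\overline{\lambda_0}=\lambda_0$; the unproved case is exactly $\im\lambda_0\neq 0$.)

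The paper's proof is engineered to avoid $\Phi\left(\cdot,\overline{\lambda_0}\right)$ altogether, and this is the device your outline is missing. Rather than a Volterra equation for $y$, one substitutes $\Phi(t,\lambda)=\Phi(t,\lambda_0)X(t,\lambda)$ and shows that $X$ satisfies the linear system \eqref{shi5.6}, whose coefficient $Q$ in \eqref{shi5.7} contains $Z^{-1}$, where $Z$ in \eqref{shi5.8} is a time-scale modification of $\Phi^*(\cdot,\lambda_0)J\Phi(\cdot,\lambda_0)$ and hence involves only $\lambda_0$-data. Its inverse is then controlled with no reference to the conjugate point: the normalization lemma \eqref{Phinormal} gives $\det Z\equiv 1$, while Lemma \ref{lemma2.3} and the hypothesis give $Z(t,\lambda)\to J+2i\im\lambda_0 V(\lambda_0)$ as $t\to\infty$ (see \eqref{shi5.12}), so $\adj Z$ is bounded and $Z^{-1}=\adj Z$ is bounded. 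Then $\int^{\infty}\|Q\|\,\nabla t<\infty$ --- your Gram estimate, but at $\lambda_0$ alone --- the exponential bound makes $X$ bounded, and $\Phi(\cdot,\lambda)=\Phi(\cdot,\lambda_0)X\in L^2_W\left([\rho(t_0),\infty)_\T\right)$ for every $\lambda\in\C$ in one step. In effect, the identity $\det Z\equiv 1$ plays for systems the role that the constant Wronskian plays in the scalar Sturm--Liouville argument; it, and not defect-index constancy or a limiting passage to the real axis, is what allows an arbitrary complex $\lambda_0$ to be handled.
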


\begin{proof}
Assume that all solutions of \eqref{maineq}$_{\lambda_{0}}$ are in $L^2_W\left([\rho(t_0),\infty)_\T\right)$ for some $\lambda_0\in\C$. Given any $\lambda\in\C$, let $\Phi(\cdot,\lambda)$ be the fundamental matrix solution of \eqref{maineq}$_{\lambda}$, that is $\Phi(\cdot,\lambda)$ solves \eqref{maineq}$_\lambda$, or \eqref{shi2.1}$_\lambda$, respectively, and satisfies $\Phi(\rho(t_0),\lambda)=I_{2d}$; note that \eqref{Phinormal} then holds. As $\Phi(t,\lambda)$ and $\Phi\left(t,\lambda_0\right)$ are both invertible, there exists an invertible matrix $X(t,\lambda)$ such that
\begin{equation}\label{shi5.4}
 \Phi(t,\lambda) = \Phi\left(t,\lambda_0\right)X(t,\lambda).
\end{equation}
We will show that $X(t,\lambda)$ is bounded for all $t\in[\rho(t_0),\infty)_\T$. Using \eqref{shi5.4}, the nabla product rule, and the simple useful time-scale formula $X^\rho=X-\nu  X^\nabla$,  we see that
\begin{eqnarray} 
 &&\Phi^\nabla(t,\lambda) = \Phi^\rho\left(t,\lambda_0\right) X^\nabla(t,\lambda) + \Phi^\nabla\left(t,\lambda_0\right) X(t,\lambda), \nonumber \\
 &&  (\Upsilon\Phi)(t,\lambda) = (\Upsilon\Phi)\left(t,\lambda_0\right) X(t,\lambda) - \nu(t) \diag\{0,I_d\} \Phi^\rho\left(t,\lambda_0\right) X^\nabla(t,\lambda), \label{shi5.5}
\end{eqnarray}
for $\Upsilon$ in \eqref{leftshift}. From \eqref{shi5.5} and the fact that $\Phi(\cdot,\lambda)$ and $\Phi(\cdot,\lambda_0)$ are fundamental solution matrices for \eqref{maineq}$_\lambda$ and \eqref{maineq}$_{\lambda_0}$, respectively, we arrive at
\begin{equation}\label{shi5.6}
 X^\nabla(t,\lambda) = Q(t,\lambda)X(t,\lambda),
\end{equation}
where we have taken 
\begin{eqnarray}
 Q(t,\lambda) 
 &=& (\lambda-\lambda_0) Z^{-1}(t,\lambda)(\Upsilon\Phi)^*(t,\lambda_0)W(t)(\Upsilon\Phi)(t,\lambda_0), \label{shi5.7} \\
 Z(t,\lambda) 
 &=& (\Upsilon\Phi)^{*}(t,\lambda_0) \begin{pmatrix} 0 & -I_{d}+\nu(t)A^*(t) \\ 
       I_d & \nu(t)(B(t)+\lambda W_2(t)) \end{pmatrix} \Phi^{\rho}(t,\lambda_0). \label{shi5.8}
\end{eqnarray} 
The multiplier $(\Upsilon\Phi)^*(t,\lambda_0)$ appears in \eqref{shi5.7} and \eqref{shi5.8} via \eqref{rightshift}, and will help in the sequel with the analysis on $Q(t,\lambda)$. First we focus on $Z(t,\lambda)$.  From \eqref{maineq}$_{\lambda_0}$ we have that
\begin{equation}\label{shi5.9}
 \Phi^\rho(t,\lambda_0) = \begin{pmatrix} I_d-\nu(t)A(t) & -\nu(t)(B(t)+\lambda_0W_2(t)) \\ 0 & I_d \end{pmatrix}(\Upsilon\Phi)(t,\lambda_0).
\end{equation}
If we substitute \eqref{shi5.9} into \eqref{shi5.8}, we see that
\begin{eqnarray}
 Z(t,\lambda) &=& (\lambda-\lambda_0)\int_{\rho(t)}^{t}(\Upsilon\Phi)^{*}(s,\lambda_0)\diag\{0,W_2(s)\} (\Upsilon\Phi)(s,\lambda_0)\nabla s   \nonumber \\
 & & + (\Upsilon\Phi)^{*}(t,\lambda_0) \begin{pmatrix} 0 & -I_{d}+\nu(t)A^*(t) \\ 
       I-\nu(t)A(t) & 0 \end{pmatrix} (\Upsilon\Phi)(t,\lambda_0), \label{shi5.10}
\end{eqnarray}
where we have used the time-scale formula $\nu(t)f(t)=\int_{\rho(t)}^{t}f(s)\nabla s$ in the first line of \eqref{shi5.10}. As all solutions of \eqref{maineq}$_{\lambda_0}$ are in $L^2_W\left([\rho(t_0),\infty)_\T\right)$, $\Phi(\cdot,\lambda_0)\in L^2_W\left([\rho(t_0),\infty)_\T\right)$, whence
\begin{equation}\label{shi5.11}
 V(\lambda_0):=\int_{\rho(t_0)}^{\infty} (\Upsilon\Phi)^{*}(s,\lambda_0) W(s) (\Upsilon\Phi)(s,\lambda_0) \nabla s < \infty.
\end{equation}
Consequently the first term on the right-hand side of \eqref{shi5.10} tends to zero as $t\rightarrow\infty$ in the time scale. Let us denote by $\Psi$ the second term on the right-hand side of \eqref{shi5.10}. From \eqref{shi5.9} we have
\begin{equation}\label{Psidef}
 \Psi(t) = \Phi^{\rho*}(t,\lambda_0)J\Phi^\rho(t,\lambda_0) + 2i\im\lambda_0 \int_{\rho(t)}^{t}(\Upsilon\Phi)^{*}(s,\lambda_0)\diag\{0,W_2(s)\} (\Upsilon\Phi)(s,\lambda_0)\nabla s,
\end{equation}
where we have used the time-scale formula $\nu(t)f(t)=\int_{\rho(t)}^{t}f(s)\nabla s$ again; the second term on the right-hand side of \eqref{Psidef} goes to 0 as $t\rightarrow\infty$ in the time scale since $\Phi(\cdot,\lambda_0)\in L^2_W\left([\rho(t_0),\infty)_\T\right)$. By Lemma \ref{lemma2.3} and the initial condition for $\Phi(\cdot,\lambda_0)$ we see that
$$ \Phi^{\rho*}(t,\lambda_0)J\Phi^\rho(t,\lambda_0) = J +2i\im\lambda_0 \int_{\rho(t_0)}^{\rho(t)} (\Upsilon\Phi)^{*}(s,\lambda_0) W(s) (\Upsilon\Phi)(s,\lambda_0) \nabla s. $$
It then follows from \eqref{shi5.11} and \eqref{Psidef} that 
$$ \lim_{t\rightarrow\infty} \Psi(t) = J+2i\im\lambda_0 V(\lambda_0), $$ 
so that
\begin{equation}\label{shi5.12}
 \lim_{t\rightarrow\infty} Z(t,\lambda) = J+2i\im\lambda_0 V(\lambda_0).
\end{equation}
From \eqref{shi5.8} and \eqref{shi5.9} we see that $Z(t,\lambda)$ is invertible for all $t\in[\rho(t_0),\infty)_\T$; we need to show that $Z^{-1}(t,\lambda)$ is bounded for all $t\in[\rho(t_0),\infty)_\T$. From \eqref{shi5.9} we have that 
$$ \det(\Upsilon\Phi)^*(t,\lambda_0) =  \det \Phi^{\rho*}(t,\lambda_0) \det E^*(t), $$
so that from \eqref{shi5.8} we obtain
\begin{eqnarray*}
 \det Z(t,\lambda) &=& \det(\Upsilon\Phi)^{*}(t,\lambda_0) \det\left(I_d-\nu(t)A^*(t)\right) \det \Phi^{\rho}(t,\lambda_0) \\
 &=&  \det\Phi^{\rho*}(t,\lambda_0)  \det\Phi^{\rho}(t,\lambda_0) = 1
\end{eqnarray*} 
since $\Phi(\cdot,\lambda_0)$ satisfies \eqref{Phinormal}. As a result, 
\begin{equation}\label{shi5.13}
  Z^{-1}(t,\lambda) = (\det Z(t,\lambda))^{-1}\adj Z(t,\lambda),
\end{equation}
where $\adj Z(t,\lambda)$ is the adjugate matrix of $Z(t,\lambda)$. Moreover, from \eqref{shi5.12} we see that $\adj Z(t,\lambda)$ is bounded on $[\rho(t_0),\infty)_\T$, whence $Z^{-1}(t,\lambda)$ is as well by \eqref{shi5.13}. Let $c\in\R$ be a positive constant such that
\begin{equation}\label{shi5.14}
 \|Z^{-1}(t,\lambda)\|:=\left(\sum_{k=1}^{2d}\sum_{j=1}^{2d}|z_{jk}(t)|\right)^{1/2}\le c, \quad t\in[\rho(t_0),\infty)_\T.
\end{equation}
Now we will show that
\begin{equation}\label{shi5.15}
 \int_{\rho(t_0)}^{\infty} \|Q(t,\lambda) \|_1\nabla t < \infty, \quad \|Q(t,\lambda) \|_1:=\sup_{\|\xi\|=1}\|Q(t,\lambda)\xi \|,
\end{equation}
where $Q(\cdot,\lambda)$ is given in \eqref{shi5.7}. From \eqref{shi5.11} it follows that all the diagonal entries of the expression \begin{equation}\label{recurringterm}
 (\Upsilon\Phi)^{*}(t,\lambda_0) W(t) (\Upsilon\Phi)(t,\lambda_0)
\end{equation}
are nonnegative and absolutely summable over $[\rho(t_0),\infty)_\T$. By referring to the nonnegativity of \eqref{recurringterm}, the absolute value of each non-diagonal entry of \eqref{recurringterm} is less than or equal to the sum of the two diagonal entries that lie exactly in the same column and row as the non-diagonal entry does. As a result, each non-diagonal entry of \eqref{recurringterm} is also absolutely summable over $[\rho(t_0),\infty)_\T$. Thus, it follows that
\begin{equation}\label{shi5.16}
 \int_{\rho(t_0)}^{\infty} \|(\Upsilon\Phi)^{*}(t,\lambda_0) W(t) (\Upsilon\Phi)(t,\lambda_0)\|\nabla t < \infty.
\end{equation}
Consequently from \eqref{shi5.7}, \eqref{shi5.14}, and \eqref{shi5.16} we have that $\int_{\rho(t_0)}^{\infty} \|Q(t,\lambda) \|\nabla t < \infty$, so that \eqref{shi5.15} follows. 

We are ready to show that $X(t,\lambda)$ is bounded on $[\rho(t_0),\infty)_\T$. For a solution of \eqref{shi5.6} to exist, we need the coefficient matrix $Q$ to be $\nu-$regressive, in other words we need to show that $I_{2d}-\nu(t)Q(t)$ is invertible for all $t\in[t_0,\infty)_\T$. From \eqref{Atilde}, \eqref{shi5.7}, \eqref{shi5.8}, and \eqref{shi5.9} we have that
$$ I_{2d}-\nu(t)Q(t) = Z^{-1}(t,\lambda)(\Upsilon\Phi)^{*}(t,\lambda_0) \begin{pmatrix} -(\lambda-\lambda_0)\nu(t)W_1(t) & -E^{*-1}(t) \\ 
       E^{-1}(t) & 0 \end{pmatrix} (\Upsilon\Phi)(t,\lambda_0); $$
since $\Phi(\cdot,\lambda_0)$ is a fundamental matrix, by \eqref{shi5.9} again we see that every matrix on the right-hand side here is invertible, making $I_{2d}-\nu(t)Q(t)$ invertible for all $t\in[t_0,\infty)_\T$. Therefore
$$ X(t,\lambda) = \hat{e}_{Q(\cdot,\lambda)}(t,\rho(t_0))X(\rho(t_0),\lambda) \stackrel{\eqref{shi5.4}}{=}\hat{e}_{Q(\cdot,\lambda)}(t,\rho(t_0)) $$
is a well-defined matrix, and thus
$$ \|X(t,\lambda)\|_1 \le \exp\left\{\int_{\rho(t_0)}^{t}\|Q(s,\lambda)\|_1\nabla s\right\}. $$
Combining this with \eqref{shi5.15} we conclude that $\|X(t,\lambda)\|_1$ is bounded for all $t\in[\rho(t_0),\infty)_\T$.

Let us now show that all solutions of \eqref{maineq}$_{\lambda}$ are in $L^2_W\left([\rho(t_0),\infty)_\T\right)$. From the second line of \eqref{shi5.5} and \eqref{shi5.9} we have that
\begin{eqnarray*}
 (\Upsilon\Phi)^*(t,\lambda)W(t)(\Upsilon\Phi)(t,\lambda)
 &=& X^*(t,\lambda)(\Upsilon\Phi)^*\left(t,\lambda_0\right)W(t)(\Upsilon\Phi)\left(t,\lambda_0\right)X(t,\lambda) \\
 && -\nu(t)  X^*(t,\lambda)(\Upsilon\Phi)^*\left(t,\lambda_0\right)\diag\{0,W_2(t)\}(\Upsilon\Phi)\left(t,\lambda_0\right)X^\nabla(t,\lambda) \\
 && -\nu(t) X^{\nabla*}(t,\lambda)(\Upsilon\Phi)^*\left(t,\lambda_0\right)\diag\{0,W_2(t)\}(\Upsilon\Phi)\left(t,\lambda_0\right)X(t,\lambda) \\
 && +(\nu(t))^2 X^{\nabla*}(t,\lambda)(\Upsilon\Phi)^*\left(t,\lambda_0\right)\diag\{0,W_2(t)\}(\Upsilon\Phi)\left(t,\lambda_0\right)X^\nabla(t,\lambda);
\end{eqnarray*}
using the simple formula $\nu X^\nabla=X-X^\rho$ again, we simplify this to
\begin{eqnarray*}
 (\Upsilon\Phi)^*(t,\lambda)W(t)(\Upsilon\Phi)(t,\lambda)
 &=& X^{*}(t,\lambda)(\Upsilon\Phi)^*\left(t,\lambda_0\right)\diag\{W_1(t),0\}(\Upsilon\Phi)\left(t,\lambda_0\right)X(t,\lambda) \\
 &&  +  X^{*\rho}(t,\lambda)(\Upsilon\Phi)^*\left(t,\lambda_0\right)\diag\{0,W_2(t)\}(\Upsilon\Phi)\left(t,\lambda_0\right)X^\rho(t,\lambda).
\end{eqnarray*}
From the boundedness of $\|X(t,\lambda)\|_1$ and \eqref{shi5.16}, we see that
$$ \int_{\rho(t_0)}^{\infty}\|(\Upsilon\Phi)^*(t,\lambda)W(t)(\Upsilon\Phi)(t,\lambda)\|\nabla t < \infty, $$
putting $\Phi(\cdot,\lambda)\in L^2_W\left([\rho(t_0),\infty)_\T\right)$.
\end{proof}


\begin{remark}
Given the above results, we are able to generalize the remainder of \cite[Section 5]{shi2} verbatim, as the proofs are unchanged. For completeness we include the main results here.
\end{remark}


\begin{definition}
Let $d_{\pm}$ be the positive and negative defect indices of $H_0$, where $H_0$ is the minimal operator corresponding to system \eqref{maineq} defined in \eqref{andH04.9}. Then the dynamic Hamiltonian nabla operator $\mathscr{L}$ given in \eqref{shieq15} is said to be in the limit $(d_+,d_-)$ case at $t=\infty$. In the special case of $d_+=d_-=d$, $\mathscr{L}$ is said to be in the limit point case $(l.p.c.)$ at $t=\infty$, and in the other special case of $d_+=d_-=2d$, $\mathscr{L}$ is said to be in the limit circle case $(l.c.c.)$ at $t=\infty$. 
\end{definition}


\begin{remark}
It is clear that there may be at most $1+d^2$ cases for the singular dynamic Hamiltonian nabla system \eqref{maineq} of degree $d$ by the largest defect index theorem (Theorem $\ref{shithm5.5}$) and by using the fact that $d\le d_{\pm}\le 2d$. However, in the special case of $d=1$, the classification is simple just like the formal self-adjoint second-order scalar difference operators; in other words, $\mathscr{L}$ is either in $l.p.c.$ or in $l.c.c.$ at $t=\infty$ by using the largest defect index theorem.
\end{remark}


\begin{theorem}
Assume \eqref{Phinormal}. Then the following nine statements are equivalent.
\begin{enumerate}
 \item $\mathscr{L}$ is in $l.c.c.$ at $t=\infty$;
 \item system \eqref{maineq}$_{\lambda}$ has $2d$ linearly independent solutions in $L^2_W\left([\rho(t_0),\infty)_\T\right)$ for all $\lambda\in\C$ with $\im\lambda\neq 0$;
 \item $\mathscr{R}_0(\lambda)$ is invertible for all $\lambda\in\C$ with $\im\lambda\neq 0$;
 \item the limiting set $\overline{E}_0(\lambda)$ has nonempty interior for all $\lambda\in\C$ with $\im\lambda\neq 0$;
 \item $\lim_{b\rightarrow\infty}\mu_j(b)=\gamma_j$ is finite and positive for $1\le j\le d$, where $\mu_j(b)(1\le j\le d)$ are eigenvalues of $F_{22}(b,\lambda)$ for all $\lambda\in\C$ with $\im\lambda\neq0$;
 \item system \eqref{maineq}$_{\lambda_0}$ has $2d$ linearly independent solutions in $L^2_W\left([\rho(t_0),\infty)_\T\right)$ for some $\lambda_0\in\C$ with $\im\lambda_0\neq 0$;  
 \item $\mathscr{R}_0(\lambda_0)$ is invertible for some $\lambda_0\in\C$ with $\im\lambda_0\neq 0$;
 \item the limiting set $\overline{E}_0(\lambda_0)$ has nonempty interior for some $\lambda_0\in\C$ with $\im\lambda_0\neq 0$;
 \item $\lim_{b\rightarrow\infty}\mu_j(b)=\gamma_j$ is finite and positive for $1\le j\le d$, where $\mu_j(b)(1\le j\le d)$ are eigenvalues of $F_{22}(b,\lambda_0)$ for some $\lambda_0\in\C$ with $\im\lambda_0\neq 0$.
\end{enumerate}
\end{theorem}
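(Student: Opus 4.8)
The plan is to organize the nine statements into two blocks—the five ``for all $\lambda$'' statements (i)--(v) and the four ``for some $\lambda_0$'' statements (vi)--(ix)—to prove the equivalences inside the first block by chaining the structural results of this section, and then to use the Largest Defect Index Theorem (Theorem \ref{shithm5.5}) as the single bridge that collapses the distinction between ``for all'' and ``for some.'' Throughout, the basic translation is that, since the solution space of the $2d$-dimensional system \eqref{maineq} has dimension $2d$, having exactly $2d$ linearly independent square summable solutions is the same as saying every solution of \eqref{maineq} lies in $L^2_W\left([\rho(t_0),\infty)_\T\right)$.

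First I would dispatch the first block. By definition $\mathscr{L}$ is in the limit circle case precisely when $d_+=d_-=2d$, and Theorem \ref{shithm5.2} identifies $d_+$ (resp.\ $d_-$) with the number of linearly independent square summable solutions of \eqref{maineq}$_\lambda$ for $\im\lambda>0$ (resp.\ $\im\lambda<0$); this gives (i) $\Leftrightarrow$ (ii) at once. Next, Theorem \ref{shithm5.3} supplies $d_+=d+r_+$ and $d_-=d+r_-$ with $r_\pm=\rank\mathscr{R}_0(\lambda)$ in the respective half-planes, so $d_\pm=2d \Leftrightarrow r_\pm=d \Leftrightarrow \mathscr{R}_0(\lambda)$ is invertible, yielding (i) $\Leftrightarrow$ (iii). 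The equivalence (iii) $\Leftrightarrow$ (iv) follows from the remark after Theorem \ref{shithm3.2}, which characterizes nonempty interior of $\overline{E}_0(\lambda)$ by the invertibility of both $\mathscr{R}_0(\lambda)$ and $\mathscr{R}_0\left(\overline{\lambda}\right)$; since in (iii)--(iv) the hypothesis ranges over all $\lambda$ with $\im\lambda\neq 0$, both radii are covered. Finally, (iii) $\Leftrightarrow$ (v) is read off from Theorem \ref{shithm4.3}: the full-rank case $r(\lambda)=d$ is exactly the one in which no eigenvalue index is exhausted, so every $\mu_j(b)$ converges to a finite positive $\gamma_j$.

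For the second block I would close the loop through Theorem \ref{shithm5.5} rather than arguing each single-$\lambda_0$ equivalence in isolation, since the interior condition in (viii) intrinsically involves both radii and is therefore not purely local. Trivially each ``for all'' statement implies its ``for some'' counterpart, giving (iii) $\Rightarrow$ (vii), (iv) $\Rightarrow$ (viii), and (v) $\Rightarrow$ (ix). Conversely each of (vii), (viii), (ix) forces $\mathscr{R}_0(\lambda_0)$ to have full rank $d$ at that single $\lambda_0$—directly for (vii), via the interior remark for (viii), and via Theorem \ref{shithm4.3} for (ix)—so Theorem \ref{shithm4.4} produces exactly $2d$ linearly independent square summable solutions of \eqref{maineq}$_{\lambda_0}$, which is (vi). It therefore suffices to establish the single bridge (vi) $\Rightarrow$ (ii): statement (vi) says all solutions of \eqref{maineq}$_{\lambda_0}$ lie in $L^2_W\left([\rho(t_0),\infty)_\T\right)$ for some $\lambda_0$, and the Largest Defect Index Theorem then guarantees the same for every $\lambda\in\C$, which is exactly (ii). Chaining these implications with the first block closes a single cycle through all nine statements. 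I expect the main obstacle to be bookkeeping rather than depth—one must track carefully that ``nonempty interior'' corresponds to invertibility of \emph{both} matrix radii and invoke Theorem \ref{shithm5.5} at precisely the one place where a single-$\lambda_0$ hypothesis must be upgraded to a statement valid for all $\lambda$.
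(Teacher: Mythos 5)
Your proposal is correct and follows exactly the intended route: the paper itself gives no explicit proof for this theorem (it defers to the discrete case, stating that Shi's proofs carry over verbatim), and your argument reconstructs precisely that chain—Theorems \ref{shithm5.2} and \ref{shithm5.3} for the equivalence of (i), (ii), (iii), the remark following Theorem \ref{shithm3.2} for (iv), Theorem \ref{shithm4.3} for (v), Theorems \ref{shithm4.3} and \ref{shithm4.4} to reduce each of (vii), (viii), (ix) to (vi), and the Largest Defect Index Theorem \ref{shithm5.5} as the single bridge upgrading the ``for some $\lambda_0$'' statement (vi) back to the ``for all $\lambda$'' statement (ii). Your handling of the two genuine subtleties—that nonempty interior of $\overline{E}_0(\lambda)$ requires invertibility of both $\mathscr{R}_0(\lambda)$ and $\mathscr{R}_0\left(\overline{\lambda}\right)$, and that $2d$ linearly independent square summable solutions is equivalent to all solutions being square summable—is also correct, so there is nothing to add.
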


Similarly, the following equivalent conditions on the limit point case can be concluded by Theorems \ref{shithm4.3} and \ref{shithm5.3}.


\begin{theorem}
Assume \eqref{Phinormal}. Then the following seven statements are equivalent:
\begin{enumerate}
 \item $\mathscr{L}$ is in $l.p.c.$ at $t=\infty$;
 \item system \eqref{maineq}$_{\lambda}$ has $d$ linearly independent solutions in $L^2_W\left([\rho(t_0),\infty)_\T\right)$ for all $\lambda\in\C$ with $\im\lambda\neq 0$;
 \item $\mathscr{R}_0(\lambda)=0$ for all $\lambda\in\C$ with $\im\lambda\neq 0$;
 \item $\lim_{b\rightarrow\infty}\mu_1(b)=\infty$, where $\mu_1(b)$ is the smallest eigenvalue of $F_{22}(b,\lambda)$ for all $\lambda\in\C$ with $\im\lambda\neq0$;
 \item  systems \eqref{maineq}$_{\lambda_0}$ and \eqref{maineq}$_{\overline{\lambda}_0}$ have exactly $d$ linearly independent solutions in $L^2_W\left([\rho(t_0),\infty)_\T\right)$, respectively, for some $\lambda_0\in\C$ with $\im\lambda_0\neq 0$;
 \item $\mathscr{R}_0(\lambda_0)=\mathscr{R}_0\left(\overline{\lambda}_0\right)=0$ for some $\lambda_0\in\C$ with $\im\lambda_0\neq 0$;
 \item $\lim_{b\rightarrow\infty}\mu_1(b)=\infty$, where $\mu_1(b)$ is the smallest eigenvalue of $F_{22}(b,\lambda_0)$ and the smallest
eigenvalue of $F_{22}\left(b,\overline{\lambda}_0\right)$ for some $\lambda_0\in\C$ with $\im\lambda_0\neq 0$.
\end{enumerate}
\end{theorem}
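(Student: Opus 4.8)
The plan is to route every statement through the pair of half-plane constants $d_{\pm}$ and $r_{\pm}$ supplied by Theorems \ref{shithm5.2} and \ref{shithm5.3}, so that all seven conditions collapse onto the single arithmetic identity $d_+ = d_- = d$. The backbone is the definition of the limit point case, by which (i) holds precisely when $d_+ = d_- = d$; every remaining statement will be shown to be a reformulation of this identity.

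First I would dispatch the global trio (ii)--(iv). By Theorems \ref{shithm5.1} and \ref{shithm5.2}, the number of linearly independent square summable solutions of \eqref{maineq}$_{\lambda}$ equals $d_+$ for every $\lambda$ with $\im\lambda>0$ and equals $d_-$ for every $\lambda$ with $\im\lambda<0$, these counts being constant throughout each open half plane. Hence demanding exactly $d$ solutions for all $\lambda$ with $\im\lambda\neq 0$ is nothing but $d_+=d_-=d$, giving (i)$\Leftrightarrow$(ii). Theorem \ref{shithm5.3} then yields $d_{\pm}=d+r_{\pm}$, so $d_{\pm}=d$ is equivalent to $r_{\pm}=0$, i.e. $\rank\mathscr{R}_0(\lambda)=0$ in each half plane, which is the vanishing $\mathscr{R}_0(\lambda)=0$ of (iii); thus (ii)$\Leftrightarrow$(iii). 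Finally, Theorem \ref{shithm4.3} identifies $\rank\mathscr{R}_0(\lambda)=r(\lambda)$ with the asymptotics of the eigenvalues $\mu_j(b)$ of $F_{22}(b,\lambda)$, and the case $r(\lambda)=0$ is exactly the one in which no finite positive limit $\gamma_j$ survives, so that $\mu_j(b)\to\infty$ for all $1\le j\le d$. Since $\mu_1(b)$ is the smallest eigenvalue, this is equivalent to $\lim_{b\to\infty}\mu_1(b)=\infty$, giving (iii)$\Leftrightarrow$(iv).

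Next I would close the loop with the local trio (v)--(vii), where the crucial device is again the half-plane constancy of $d_{\pm}$ and $r_{\pm}$. Because $\im\lambda_0\neq 0$ forces $\lambda_0$ and $\overline{\lambda}_0$ into opposite half planes, the requirement in (v) that both \eqref{maineq}$_{\lambda_0}$ and \eqref{maineq}$_{\overline{\lambda}_0}$ possess exactly $d$ square summable solutions simultaneously pins down $d_+=d$ and $d_-=d$; by the constancy in Theorem \ref{shithm5.2} this single conjugate pair delivers the same information as the ``for all $\lambda$'' hypothesis, so (ii)$\Leftrightarrow$(v). Repeating the argument verbatim with Theorems \ref{shithm5.3} and \ref{shithm4.3} on the pair $\{\lambda_0,\overline{\lambda}_0\}$ produces (iii)$\Leftrightarrow$(vi) and (iv)$\Leftrightarrow$(vii), the conjugate pair being precisely what forces both $r_+=0$ and $r_-=0$ out of a single choice of $\lambda_0$.

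The step I expect to demand the most care is the bookkeeping separating the global conditions from the local ones: one must insist in (v)--(vii) on both $\lambda_0$ and its conjugate, since the limit point case constrains $d_+$ and $d_-$ independently and a single half plane controls only one of the two. Once Theorems \ref{shithm4.3}, \ref{shithm5.2}, and \ref{shithm5.3} are in hand the mathematical content is slight; as the paper remarks, the result ``can be concluded'' directly from those theorems, and the proof amounts to recognizing that all seven assertions are equivalent encodings of $d_+=d_-=d$.
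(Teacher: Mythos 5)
Your proposal is correct and follows exactly the route the paper intends: the paper offers no detailed argument, stating only that the theorem ``can be concluded by Theorems \ref{shithm4.3} and \ref{shithm5.3}'' (with Theorems \ref{shithm5.1} and \ref{shithm5.2} implicitly supplying the link between defect indices and square-summable solution counts), and your reduction of all seven statements to the identity $d_+=d_-=d$ via $d_\pm=d+r_\pm$ and the eigenvalue asymptotics of $F_{22}(b,\lambda)$ is precisely that argument made explicit. Your emphasis on needing both $\lambda_0$ and $\overline{\lambda}_0$ in (v)--(vii), since each half plane controls only one of $d_+$, $d_-$, is exactly the right bookkeeping point.
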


If all the coefficients of system \eqref{maineq}$_{\lambda}$ are real, we have the following results by Theorems $\ref{shi5.4}$ and $\ref{shi5.7}$.


\begin{cor}\label{shicor5.1}
If $P(t)$ and $W(t)$ are real for all $t\in[\rho(t_0),\infty)_\T$, then following nine statements are equivalent.
\begin{enumerate}
 \item $\mathscr{L}$ is in $l.p.c.$ at $t=\infty$;
 \item system \eqref{maineq}$_{\lambda}$ has exactly $d$ linearly independent solutions in $L^2_W\left([\rho(t_0),\infty)_\T\right)$ for all $\lambda\in\C$ with $\im\lambda\neq 0$;
 \item $\mathscr{R}_0(\lambda)=0$ for all $\lambda\in\C$ with $\im\lambda\neq 0$;
 \item the limiting set $E_0(\lambda)$ contains only one element for all $\lambda\in\C$ with $\im\lambda\neq 0$;
 \item $\lim_{b\rightarrow\infty}\mu_1(b)=\infty$, where $\mu_1(b)$ is the smallest eigenvalue of $F_{22}(b,\lambda)$ for all $\lambda\in\C$ with $\im\lambda > 0$;
 \item system \eqref{maineq}$_{\lambda_0}$ has exactly $d$ linearly independent solutions in $L^2_W\left([\rho(t_0),\infty)_\T\right)$ for some $\lambda_0\in\C$ with $\im\lambda_0\neq 0$;  
 \item $\mathscr{R}_0(\lambda_0)=0$ for some $\lambda_0\in\C$ with $\im\lambda_0\neq 0$;
 \item the limiting set $E_0(\lambda_0)$ contains only one element for some $\lambda_0\in\C$ with $\im\lambda_0\neq 0$;
 \item $\lim_{b\rightarrow\infty}\mu_1(b)=\infty$, where $\mu_1(b)$ is the smallest eigenvalue of $F_{22}(b,\lambda_0)$ for some $\lambda_0\in\C$ with $\im\lambda_0\neq 0$.
\end{enumerate}
\end{cor}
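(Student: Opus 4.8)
The plan is to derive this corollary from the preceding characterization of the limit point case together with the extra symmetry enjoyed when $P$ and $W$ are real. The single new ingredient is complex conjugation: since $J$, $P(t)$, and $W(t)$ are real, a vector $y(\cdot,\lambda)$ solves \eqref{maineq}$_{\lambda}$ if and only if its entrywise conjugate $\overline{y(\cdot,\lambda)}$ solves \eqref{maineq}$_{\overline{\lambda}}$, and $\|y(\cdot,\lambda)\|_W=\|\,\overline{y(\cdot,\lambda)}\,\|_W$ because $W$ is real. Consequently the map $y\mapsto\overline{y}$ is a linear bijection between the square summable solutions of \eqref{maineq}$_{\lambda}$ and those of \eqref{maineq}$_{\overline{\lambda}}$, so these two systems have the same number of linearly independent square summable solutions. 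By Theorem \ref{shithm5.3} this is exactly the statement $r(\lambda)=r(\overline{\lambda})$, hence $r_+=r_-$ and $d_+=d_-$.

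First I would record the immediate consequence that $\mathscr{R}_0(\lambda)=0$ if and only if $\mathscr{R}_0(\overline{\lambda})=0$, for every $\lambda$ with $\im\lambda\neq 0$. Indeed, by Theorem \ref{shithm4.4} the rank $r(\lambda)=\rank\mathscr{R}_0(\lambda)$ is independent of the left boundary matrix $\alpha$, so $\alpha$ may be taken real; then the conjugation symmetry gives $\overline{\phi(\cdot,\lambda)}=\phi(\cdot,\overline{\lambda})$ and $\overline{\theta(\cdot,\lambda)}=\theta(\cdot,\overline{\lambda})$, whence $F_{22}(b,\overline{\lambda})=\overline{F_{22}(b,\lambda)}$ and $\mathscr{R}_0(\overline{\lambda})=\overline{\mathscr{R}_0(\lambda)}$. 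This symmetry is precisely what collapses the two-sided conditions of the general limit point theorem to the one-sided conditions (vi), (vii), and (ix) here: having exactly $d$ square summable solutions, respectively $\mathscr{R}_0=0$, respectively $\mu_1(b)\to\infty$, at a single $\lambda_0$ automatically forces the same at $\overline{\lambda}_0$.

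Next I would dispose of the two genuinely new statements (iv) and (viii) concerning the limiting set $E_0(\lambda)$. By the Remark following Theorem \ref{shithm3.2}, the limiting circle collapses to a single element exactly when $\mathscr{R}_0(\lambda)=0$ or $\mathscr{R}_0(\overline{\lambda})=0$; under the real-coefficient symmetry just established these two vanishing conditions coincide, so (iv) is equivalent to (iii) and (viii) is equivalent to (vii). The remaining equivalences among (i)--(iii), (v), (vi), (vii), and (ix) are then inherited from the preceding limit point theorem, with the passage from the ``for some $\lambda_0$'' statements to the ``for all $\lambda$'' statements supplied by the Largest Defect Index Theorem (Theorem \ref{shithm5.5}).

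The main obstacle is the clean verification that $\mathscr{R}_0(\lambda)=0$ if and only if $\mathscr{R}_0(\overline{\lambda})=0$, since this single fact drives both the reduction of the two-sided conditions to one-sided ones and the treatment of the new single-element conditions (iv) and (viii); everything else is a transcription of the general (not necessarily real) case. The only point demanding care is the freedom to choose $\alpha$ real, which is licensed by the $\alpha$-independence of $r(\lambda)$ from Theorem \ref{shithm4.4}, so that the conjugation symmetry applies directly to the solutions $\theta$ and $\phi$ that define $F_{22}$.
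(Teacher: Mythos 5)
Your proof is correct and takes essentially the route the paper intends: the paper (deferring to Shi's discrete proofs, which it states carry over verbatim) obtains the corollary from the preceding limit point equivalence theorem plus the real-coefficient conjugation symmetry, and your observation that $y\mapsto\overline{y}$ maps square summable solutions of \eqref{maineq}$_{\lambda}$ bijectively onto those of \eqref{maineq}$_{\overline{\lambda}}$ --- forcing $d_+=d_-$, $r_+=r_-$, and $\mathscr{R}_0(\lambda)=0\Leftrightarrow\mathscr{R}_0\left(\overline{\lambda}\right)=0$ --- is exactly the ingredient that collapses the two-sided conditions of that theorem to the one-sided conditions (vi), (vii), (ix) and settles the new items (iv), (viii) via the remark on reduced disks. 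One correction: the passage from ``for some $\lambda_0$'' to ``for all $\lambda$'' is not supplied by the Largest Defect Index Theorem (Theorem \ref{shithm5.5}), which concerns the case where all $2d$ solutions are square summable and is irrelevant to the limit point setting; that passage comes from the constancy of the defect indices in each half-plane together with Theorems \ref{shithm5.1}--\ref{shithm5.3}, and it is already internal to the preceding limit point theorem you invoke, so your argument stands once that clause is deleted.
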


As a consequence of Theorems \ref{shithm4.4}, \ref{shithm5.3}, and \ref{shithm5.5}, Corollary \ref{shicor5.1} holds in the special case of $d=1$, no matter if the coefficients of \eqref{maineq}$_{\lambda}$ are real or complex.


\begin{cor}
If $d=1$, then the equivalent statements $\rm{(i)}-\rm{(ix)}$ in Corollary $\ref{shicor5.1}$ hold.
\end{cor}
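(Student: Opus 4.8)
The plan is to show that, when $d=1$, the Largest Defect Index Theorem (Theorem \ref{shithm5.5}) supplies exactly the link between the two half-planes that the reality of $P$ and $W$ provided in Corollary \ref{shicor5.1}; once that link is in place, the equivalence argument of Corollary \ref{shicor5.1} carries over unchanged. First I would record the numerical constraint forced by $d=1$. Theorem \ref{shithm5.2} gives $d_\pm \ge d = 1$, while the general bound yields $d_\pm \le 2d = 2$, so $d_\pm \in \{1,2\}$. By Theorem \ref{shithm5.1}, the defect index $d(\lambda)$ equals the number of linearly independent square summable solutions of \eqref{maineq}$_\lambda$; hence $d(\lambda) \in \{1,2\}$ for every $\lambda$ with $\im\lambda \ne 0$, and it is constant on each open half-plane.

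Second, and this is the heart of the matter, I would obtain the symmetry $d_+ = d_-$ without invoking reality. In the scalar case, ``all solutions of \eqref{maineq}$_\lambda$ lie in $L^2_W$'' is the same as $d(\lambda) = 2$, while ``not all solutions lie in $L^2_W$'' is the same as $d(\lambda) = 1$, because $d(\lambda) \ge 1$ always. The contrapositive of Theorem \ref{shithm5.5} then reads: if $d(\lambda_0) = 1$ for even a single $\lambda_0$, then $d(\lambda) = 1$ for every $\lambda$. Together with the half-plane constancy of $d(\cdot)$, this produces the clean dichotomy that either $d(\lambda)\equiv 2$ (so $d_+ = d_- = 2$, l.c.c.) or $d(\lambda)\equiv 1$ (so $d_+ = d_- = 1$, l.p.c.); in either case $d_+ = d_-$, equivalently $r_+ = r_-$ in the notation of Theorem \ref{shithm5.3}.

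Third, with $r_+ = r_-$ established, I would reproduce the equivalence proof of Corollary \ref{shicor5.1}. The single-half-plane passages between the ``for some $\lambda_0$'' and ``for all $\lambda$'' versions are already guaranteed by the constancy of $\rank\mathscr{R}_0$ on each half-plane (Theorem \ref{shithm5.3}), and the translations among square summability, the vanishing of $\mathscr{R}_0(\lambda)$, the limiting set $E_0(\lambda)$ reducing to a single point (using the discussion following Theorem \ref{shithm3.2}), and the asymptotics $\mu_1(b) \to \infty$ (via Theorem \ref{shithm4.3}) are supplied by Theorems \ref{shithm4.3} and \ref{shithm4.4}, none of which used reality. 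The sole step that genuinely required reality in Corollary \ref{shicor5.1}, namely passing from a one-sided hypothesis at $\lambda_0$ to the two-sided conclusion $d_+ = d_- = 1$, is now furnished by the dichotomy: for instance statement (vi) ($d(\lambda_0) = 1$ for some $\lambda_0$) forces $d(\lambda)\equiv 1$ by the contrapositive above and hence statement (ii), while statement (viii) ($E_0(\lambda_0)$ a single point, i.e. $\mathscr{R}_0(\lambda_0)\mathscr{R}_0\left(\overline{\lambda}_0\right)=0$) forces one of $r_+,r_-$ to vanish and hence both by $r_+ = r_-$, giving statement (iv).

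I expect the only genuine difficulty to be conceptual rather than computational: one must recognize that $d=1$ excludes all intermediate defect values, so that the one-directional Theorem \ref{shithm5.5}, read contrapositively, becomes the bridge between the two half-planes that reality would otherwise provide. Once $d_+ = d_-$ is secured in this way, the corollary reduces to the already-established non-real equivalences.
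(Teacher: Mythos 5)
Your proposal is correct and follows essentially the same route as the paper, which disposes of this corollary in a single sentence citing Theorems \ref{shithm4.4}, \ref{shithm5.3}, and \ref{shithm5.5}: the intended argument is precisely your observation that for $d=1$ the defect indices can only be $1$ or $2$, so the Largest Defect Index Theorem, read contrapositively, forces the dichotomy $d_+=d_-\in\{1,2\}$ and thereby replaces the role that reality of $P$ and $W$ played in Corollary \ref{shicor5.1}. Your write-up simply makes explicit the details the paper leaves to the reader.
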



\begin{remark}
Much of the theory of Weyl and Titchmarsh remains that can be extended to time scales, such as $M(\lambda)$ theory in the limit point case \cite[Section 6]{shi2}, asymptotic expansion of Weyl-Titchmarsh matrices and Green's matrices \cite{cg2}, and so on, leaving the future of the subject open to interested researchers.
\end{remark}


\section{alternative form}

In this section we introduce a possible alternative form for this theory on Sturmian time scales. For example, instead of system \eqref{maineq}, consider the alternative system
\begin{equation}\label{alteq}
 J(\Upsilon y)^\Delta(t)=\Big(\lambda W(t)+P(t)\Big) y(t), \quad t\in[t_0,\infty)_\T, \quad J=\left(\begin{smallmatrix} 0_n & -I_n \\ I_n & 0_n \end{smallmatrix}\right),
\end{equation}
for the same block matrices $W$ and $P$, where we have the delta derivative and $\Upsilon y$ on the left-hand side for $\Upsilon$ in \eqref{leftshift}, $y$ on the right-hand side, and where this time we assume 
\begin{equation}\label{msA1}
  E_2(t):=\Big(I_n+\mu(t) A^*(t)\Big)^{-1} 
\end{equation} 
exists instead of \eqref{Atilde}. System \eqref{alteq} may also be viewed as a generalization of \eqref{shi1.5} and \eqref{ms1.2}. For \eqref{alteq} we have the integration by parts formula (compare with Theorem \ref{Lagrange})
$$ \int_{a}^{b} \left[z^*J(\Upsilon y)^\Delta - \left(J(\Upsilon z)^\Delta\right)^*y \right](t)\Delta t 
         = (\Upsilon z)^*(b)J(\Upsilon y)(b) - (\Upsilon z)^*(a)J(\Upsilon y)(a). $$
Moreover, the scalar product in \eqref{ipinfty} is now replaced by a standard weighted product without shifts given by
\begin{equation}\label{altprod}
 (x,y)_W:=\int_{t_0}^{\infty}y^*(t)W(t)x(t)\Delta t, \quad x,y\in L_W^2([t_0,\infty)_\T). 
\end{equation}
Additionally, as in \eqref{nabsym} we may rewrite \eqref{alteq} as the system 
\begin{equation}\label{delsym}
 (\Upsilon y)^\Delta(t)=\mathcal{K}(t,\lambda)(\Upsilon y)(t), \quad \mathcal{K}(\cdot,\lambda):=-J(\lambda W+P)\widehat{H},
\end{equation}
where on $[t_0,\infty)_\T$ we use $E_2=(I_n+\mu A^*)^{-1}$ and
\begin{equation}\label{ms2.2}
 \widehat{H}:=\begin{pmatrix} I_n & 0_n \\ \mu E_2(C-\lambda W_1) & E_2 \end{pmatrix}, \quad\text{with}\quad -J(\lambda W+P)=\begin{pmatrix} A & \lambda W_2+B \\ C-\lambda W_1 & -A^* \end{pmatrix},
\end{equation}
since $E_2A^*=A^*E_2$ and $I-\mu A^*E_2=E_2$. 
Directly from the definition of $\mathcal{K}(\cdot,\lambda)$ in \eqref{delsym} we have that
$$ I_{2n}+\mu(t)\mathcal{K}(t,\lambda) = \begin{pmatrix} I_n+\mu(t)A(t) & \mu(t)(\lambda W_2(t)+B(t)) \\ 0_n & I_n \end{pmatrix} \widehat{H}(t), $$
so that $I_{2n}+\mu\mathcal{K}(\cdot,\lambda)$ is invertible by \eqref{msA1}, $\mathcal{K}(\cdot,\lambda)$ is regressive, and the matrix equation $(\Upsilon y)^\Delta=\mathcal{K}(\cdot,\lambda)\Upsilon y$ with initial condition $(\Upsilon y)(t_0)=y_0$ has a unique solution $\Upsilon y$ on $[t_0,\infty)_\T$. It follows that an initial value problem involving \eqref{alteq} has a unique solution $y$ in
$$ \left\{y=(y_1,y_2)^{\trans}\Big|\; y_1,y_2^\rho:[t_0,\infty)_\T\rightarrow\C^n \; \text{are delta differentiable} \right\}. $$

In summary, to unify \eqref{shi1.5} and \eqref{ms1.2} on Sturmian time scales, systems equivalent to \eqref{maineq} or \eqref{alteq} must be used to account for the shifts in the discrete case \cite{shi2}. For such systems to admit the existence and uniqueness of solutions to initial value problems, an integration by parts formula, and a matrix weighted scalar product in a Hilbert space, the Sturmian assumption \eqref{sturmfact} is essential.


\end{document}